\newtheorem{theorem}{Theorem}[section]
\newtheorem{lem}[theorem]{Lemma}
\newtheorem{cor}[theorem]{Corollary}
\newtheorem{pro}[theorem]{Proposition}
\newtheorem*{maintheorema}{Main Theorem 1}
\newtheorem*{maintheoremb}{Main Theorem 2}
\theoremstyle{definition}
\newtheorem{fed}[theorem]{Definition}
\newtheorem{exa}[theorem]{Example}
\theoremstyle{remark}
\newtheorem{rem}[theorem]{Remark}
\newcommand{\ydh}{{}^{H}_{H}\mathcal{YD}}
\newcommand{\ydhdual}{{}^{H^*}_{H^*}\mathcal{YD}}
\def\pf{\begin{proof}}
\def\epf{\end{proof}}
\newcommand{\ydgdual}{{}_{\ku^G}^{\ku^G}\mathcal{YD}}
\newcommand{\nc}{\newcommand}
\nc{\D}{\Delta}
\nc{\cP}{\mathcal{P}} \nc{\cU}{\mathcal{U}} \nc{\cX}{\mathcal{X}}
\nc{\cE}{\mathcal{E}} \nc{\cS}{\mathcal{S}} \nc{\cA}{\mathcal{A}}
\nc{\cC}{\mathcal{C}} \nc{\cO}{\mathcal{O}} \nc{\cQ}{\mathcal{Q}}
\nc{\cB}{\mathcal{B}} \nc{\cJ}{\mathcal{J}} \nc{\cI}{\mathcal{I}}
\nc{\cM}{\mathcal{M}} \nc{\cG}{\mathcal{G}} \nc{\cL}{\mathcal{L}}
\nc{\cK}{\mathcal{K}}
\nc{\e}{\varepsilon}
\nc{\ba}{\mathbf{a}} \nc{\bb}{\mathbf{b}} \nc{\bt}{\mathbf{t}}
\nc{\bi}{\mathbf{i}} \nc{\bj}{\mathbf{j}}
\nc{\bB}{\mathbf{B}} \nc{\bS}{\mathbf{S}}  
\newcommand{\Sn}{{\mathbb S}}
\newcommand\gA{\mathfrak{A}}
\newcommand\id{\operatorname{id}}
\newcommand{\ydg}{{}^{\ku G}_{\ku G}\mathcal{YD}}
\newcommand\supp{\operatorname{supp}}
\newcommand\Inn{\operatorname{Inn}}
\newcommand\sgn{\operatorname{sgn}}
\newcommand\Aut{\operatorname{Aut}}
\newcommand\gr{\operatorname{gr}}
\newcommand\co{\operatorname{co}}
\newcommand\ord{\operatorname{ord}}
\newcommand\Alg{\operatorname{Alg}}
\newcommand\Aff{\operatorname{Aff}}
\def\bD{\mathbb{D}}
\def\k{\Bbbk}
\def\ku{\Bbbk}
\def\ot{\otimes}
\def\s{\mathbb{S}}
\def\N{\mathbb{N}}
\def\Z{\mathbb{Z}}
\def\F{\mathbb{F}}
\def\A{\mathbb{A}}
\def\Bb{\mathbb{B}}
\def\B{\mathfrak{B}}
\def\fop{\mathfrak{op}}
\def\fc{\mathfrak{c}}
\def\eps{\varepsilon}
\def\mA{\mathcal{A}}
\def\mG{\mathcal{G}}
\def\mJ{\mathcal{J}}
\def\mT{\mathcal{T}}
\def\Ss{\mathcal{S}}
\def\mL{\mathcal{L}}
\def\mR{\mathcal{R}}
\def\mH{\mathcal{H}}
\newcommand\can{\operatorname{can}}
\newcommand\op{\operatorname{op}}
\newcommand\cop{\operatorname{cop}}
\renewcommand{\_}[1]{_{\left( #1 \right)}}
\begin{document}



\title[Pointed or Copointed Hopf algebras]{Finite-dimensional Pointed or Copointed Hopf
algebras over affine racks}

\author[Garc\'ia Iglesias, Vay]{Agust\'in Garc\'ia Iglesias and Cristian Vay}

\address{FaMAF-CIEM (CONICET), Universidad Nacional de C\'ordoba,
Medina A\-llen\-de s/n, Ciudad Universitaria, 5000 C\' ordoba, Rep\'ublica Argentina.}

\email{(aigarcia|vay)@famaf.unc.edu.ar}

\thanks{\noindent 2010 \emph{Mathematics Subject Classification.}
16T05. \newline This work was partially supported by ANPCyT-FONCyT, CONICET,
MinCyT (C\'ordoba) and Secyt (UNC). Part of the work of A.G.I. was done as a
fellow of the Erasmus Mundus programme of the EU in the Universit\`a degli Studi di
Padova.
}

\begin{abstract}
We study the pointed or copointed liftings of Nichols algebras associated to
affine racks and constant cocycles for any finite group admitting a principal 
YD-realization of these racks. In the copointed case we complete
the classification for the six affine racks whose Nichols algebra is known to be of
finite dimension. In the pointed case our method allows us to finish
four of them. In all of the cases the Hopf algebras obtained turn out to be cocycle
deformations of their associated graded Hopf algebras. All of them are new examples of
finite-dimensional copointed or pointed Hopf algebras over non-abelian groups. 
\end{abstract}

\maketitle

\section{Introduction}

Let $\ku$ be an algebraically closed field of characteristic zero and let $H$ be a
semisimple Hopf algebra over $\ku$.
This work is in the framework of the classification of finite-dimensional Hopf algebras 
whose coradical is a Hopf subalgebra isomorphic to $H$. Let $\mathfrak{F}_H$ be the family
of such Hopf algebras. This problem has two interrelated sub-problems:
\begin{itemize}
\item To determine  all $V\in \ydh$ such that the Nichols algebra $\B(V)$ is
finite-dimensional and give a presentation of $\B(V)$.
\item To classify the lifting Hopf algebras of $\B(V)$ over $H$.
\end{itemize}
If $A\in\mathfrak{F}_H$ is generated in degree one, then $A$ is a lifting of a Nichols
algebra over its coradical. It was conjectured that this holds when $H$ is a group
algebra \cite{AS1}. These steps compose the Lifting Method of \cite{AS3}.

\medbreak

First defined by Nichols, and also called quantum symmetric algebras, Nichols algebras
are determined by a profound combinatorial behavior which is no yet fully
understood. They are not Hopf algebras in the usual sense, but rather Hopf
algebras in the category of Yetter-Drinfeld modules $\ydh$.

Let $G$ be a finite group. If $G$ is abelian, all $V\in \ydg$ with
$\dim\B(V)<\infty$ have
been determined in \cite{H} and the presentation of $\B(V)$ together with a positive
answer to the conjecture in \cite{AS1} were given in \cite{A1,A2}.  If $G$
is non-abelian, it has been shown that for many (simple) groups most $V\in \ydg$ yield
Nichols
algebras of infinite dimension \cite{AFGV,AFGV2}. Furthermore, only a few examples of
finite-dimensional Nichols algebras are known, see below. Up to date, it is very
complicated to find the relations defining the Nichols algebras and to compute their
dimension, even using the computer, see \cite{grania}. Notice that Nichols
algebras in $\ydgdual$ and $\ydg$ coincide since
these categories are braided equivalent, see Section \ref{subsec:relacion entre nichols}. 

\medbreak

Recall that the Hopf algebras in $\mathfrak{F}_{\ku G}$ are called {\it pointed}, while
those in $\mathfrak{F}_{\ku^G}$ are called {\it copointed}, cf. \cite{AV}. 

\medbreak

The most prominent result in the classification of Hopf algebras is in \cite{AS3} where
the pointed Hopf algebras over an abelian group of order coprime with $210$ are
classified. The classification of nontrivial, {\it i.e.} different from group
algebras,
pointed Hopf algebras over non-abelian group is known for: $\Sn_3$ \cite{AHS}, $\Sn_4$
\cite{GG} and $\bD_{4t}$ \cite{FG}. Also they have been classified the cases
$\A_n$, $n\geq5$, and most simple sporadic groups but all turn out to be group algebras
\cite{AFGV,AFGV2,FV}. In the copointed case the classification is known only for $\Sn_3$
\cite{AV}. The Hopf algebras obtained in the above results are all liftings of Nichols
algebras over their coradical and cocycle deformations of each other
\cite{Ma3,FG,AV2,GIM,GaM}.

Also, in \cite{CDMM} the liftings of the quantum line over four
families of nontrivial semisimple Hopf algebras are classified, and in
\cite{ardizzonimstefan} another approach for the lifting problem is proposed.

\medbreak

Nichols algebras of finite-dimension over non-abelian groups appear associated to racks
and 2-cocycles, see \cite{AG2}. It is worth mentioning that racks appear also
in the calculus of knot invariants \cite{GT}. Next, we list all pairs of
non-abelian indecomposable racks and cocycles whose
associated Nichols algebras are known to be finite-dimensional, see for instance 
\cite{grania}.

(1) Racks of the conjugacy classes $\cO_m^n$ of $m$-cycles in $\Sn_n$:
\begin{itemize}
\item The rack $\cO_2^n$ and constant 2-cocycle $-1$, $n=3, 4, 5$.
\item The rack $\cO_2^n$ and a non-constant 2-cocycle $\chi$, $n=4, 5$.
\item The rack $\cO_4^4$ and constant 2-cocycle $-1$.
\end{itemize}
Their Nichols algebras were studied in \cite{MS,FK,AG2,GG}. In
\cite{V} it is shown that the Nichols algebras associated to $\cO_2^n$ with constant and
non-constant 2-cocycle are twist equivalent. All of these racks can
be realized over the symmetric groups and their duals. The families $\mathfrak{F}_{\s_3}$
and $\mathfrak{F}_{\s_4}$ were classified in \cite{AHS, GG} respectively, and
$\mathfrak{F}_{\ku^{\s_3}}$ in \cite{AV}.  

(2) The affine racks: 
\begin{itemize}
\item $(\F_3,2)$, $(\F_4,\omega)$, $(\F_5,2)$, $(\F_5,3)$, $(\F_7,3)$
and $(\F_7,5)$ with constant 2-cocycle $-1$ \cite{MS,grania1,AG2}\footnote{As racks 
$(\F_3,2)\simeq\cO_2^3$, $(\F_5,2)^*\simeq(\F_5,3)$ and $(\F_7,3)^*\simeq(\F_7,5)$}. 
\item $(\F_4,\omega)$ and a non-constant 2-cocycle $\zeta$ \cite{HLV} with $\zeta_{ii}$ a
third root of 1 for $i\in\F_4$.
\end{itemize}
The aim of this work is to study both the pointed and copointed lifting of the Nichols
algebras associated to these affine racks $(\F_b,N)$ with constant 2-cocycle -1. In this 
case no liftings are known, apart from the
case $(\F_3,2)$, see \cite[Theorem 3.8]{AG3}. In \cite{AAnMGV} a general strategy to
classify the family
$\mathfrak{F}_H$ is developed showing at the same time that they are cocycle 
deformations of the bosonization $\B(V)\# H$. We adapt the ideas there to compute the 
pointed, and
copointed, liftings of these Nichols algebras over any group $G$. We also give results
which apply to other racks. 

\medbreak

The classification in the pointed case is given by the next theorem.

\begin{maintheorema}
Let $G$ be a finite group. The pointed Hopf algebras over $\ku G$ whose infinitesimal braiding 
arises from a principal YD-realization of an affine rack $X$ with the constant 2-cocycle 
$q\equiv-1$ are classified in
\begin{itemize}
\item[(i)] Theorem \ref{teo:f3}, if $X=(\F_3,2)$. 
\item[(i)] Theorem \ref{teo:f4}, if $X=(\F_4,\omega)$.
\item[(iii)] Theorem \ref{teo:f5}, if $X=(\F_5,2)$.
\item[(iv)] Theorem \ref{teo:f5-3}, if $X=(\F_5,3)$.
\end{itemize}
All of these liftings are cocycle deformations of $\B(X,-1)\#\ku G$.
\end{maintheorema}

The first item is already in \cite[Theorem 3.8]{AG3}, without the statement about cocycle
deformations. It is important to remark that in some of these new examples, some of the
relations are not
only deformed by elements in the coradical, but also by elements in higher terms of the
coradical filtration. This phenomenon is quite new, and was only present previously in
some deformations in \cite{He} for the abelian case. 

\medbreak

The classification in the copointed case is given by the next theorem.

\begin{maintheoremb}
Let $G$ be a finite group. The copointed Hopf algebras over $\ku^G$ whose infinitesimal
braiding arises from a principal YD-realization of an affine rack $X$ with the
constant 2-cocycle $q\equiv-1$ are classified in
\begin{itemize}
\item[(i)] Theorem \ref{prop: clas of F3 2}, if $X=(\F_3,2)$.
\item[(ii)] Theorem \ref{thm:lifting over affines grandes}, if $X=(\F_4,\omega)$, 
$(\F_5,2)$, 
$(\F_5,3)$, $(\F_7,3)$ or
$(\F_7,5)$.
\end{itemize}
All of these liftings are cocycle deformations of $\B(X,-1)\#\ku^G$.
\end{maintheoremb}

We explicitly define biGalois objects to prove the last assertion. These liftings are new
examples of Hopf algebras.

\medbreak

The Hopf algebras found are presented as quotients of bosonizations of tensor
algebras. Hence the greatest obstacle to achieve our principal results is to show that
these quotients have the right dimension, or just to show that they are nonzero. The same
issue is present in the rest of the works cited above. We are able to avoid this obstacle
by showing that the quotient is a cocycle deformation, as proposed in \cite{AAnMGV}.
However, some very complicated computations are necessary at an intermediate step and we
are forced to appeal to computer program \cite{GAP}. However, we find that the computer 
is not always enough and some examples
cannot be attacked with this method. The same computational impediment is present in the 
calculation
of Nichols algebras themselves. Hence, new tools are required to attack these problems,
such as representation theory, see for instance \cite{AV2,GG,FG}.

\medbreak

The paper is organized as follows: In Section \ref{sec:pre} we give some conventions and 
notations. In Section \ref{subsec:relacion entre nichols} we give the correspondence 
between Nichols algebras in braided equivalent categories of
Yetter-Drinfeld modules. We recall the notions of rack and Yetter-Drinfeld realization of 
a rack over a group. In Section \ref{subsec:affine-racks}, we introduce the known 
examples 
of finite-dimensional Nichols algebras attached to an affine rack and give some 
properties of these which will be useful for us. In Section \ref{very general results} we 
go through the ideas in \cite{AAnMGV} and adapt them to prove new results
that apply in our setting. In Sections 6 and 7 we use these results to prove our main 
theorems. We also include an Appendix with the ideas behind some of the
computations.

\subsection*{Acknowledgments.} We thank Nicol\'as Andruskiewitsch for suggesting us this
problem and for the many useful comments he shared with us in previous versions of this
work. We also thank Iv\'an Angiono and Leandro Vendramin for very useful discussions.
In particular, the idea for computing coproducts with \cite{GAP} came up in conversations
with L. V. Also, A. G. I. specially thanks Giovanna Carnovale from the
Universit\`a degli Studi di Padova where part of his work was done and for her warm
hospitality and help. We are also indebted to Mart\'in Mombelli for sharing his 
knowledge about braided categories with us.

\section{Preliminaries}\label{sec:pre}

We work over an algebraically closed field $\k$ of characteristic zero;
$\k^*:=\k\setminus\{0\}$.
If $X$ is a set, then $\ku X$ denotes the
free vector space over $X$. If $A$ is an algebra and $X\subset A$,
then $\langle X\rangle$ is the two-sided ideal generated by $X$.

Let $G$ be a finite group. We denote by $e$ the identity element of $G$, by $\ku G$ its
group algebra and by $\ku^G$ the function algebra on $G$. The usual basis of $\ku G$ is
$\{g:g\in
G\}$ and $\{\delta_g:g\in G\}$ is its dual basis in $\ku^G$, i.e.
$\delta_g(h)=\delta_{g,h}$ for all $g,h\in G$. If $M$ is a $\ku^G$-module and $g\in G$,
the isotypic component of weight $g$ is $M[g]=\delta_g\cdot M$. We write $\supp M=\{g\in
G:M[g]\neq0\}$ and $M^\times=\bigoplus_{g\neq e}M[g]$. The symmetric group in $n$ letters 
is
denoted by $\Sn_n$ and $\sgn:\Sn_n\to\Z_2$ denotes the morphism given by the sign. 

Let $H$ be a Hopf algebra. Then $\Delta$, $\e$, $\mathcal{S}$ denote respectively the
comultiplication, the counit and the antipode. We use Sweedler's notation for
comultiplication and coaction but dropping the summation symbol. We denote by
$\{H_{[i]}\}_{i\geq0}$ the coradical filtration of $H$ and by $\gr H=\oplus_{n\geq0}\gr^n
H=\bigoplus_{n\geq0}H_{[n]}/H_{[n-1]}$ the associated graded Hopf algebra of $H$ with 
$H_{[-1]}=0$. 

Assume $\mathcal{S}$ is bijective and let $\ydh$ be the category of Yetter-Drinfeld 
modules over $H$. If $V\in\ydh$, then the dual object $V^*\in\ydh$ is defined by $\langle 
h\cdot 
f,v\rangle=\langle f,\mathcal{S}(h)\cdot v\rangle$ and $f\_{-1}\langle 
f\_{0},v\rangle=\mathcal{S}^{-1}(v\_{-1})\langle f,v\_{0}\rangle$ for all $v\in V$, $f\in 
V^*$  and $h\in H$, where $\langle\,,\,\rangle$ denotes the standard evaluation.

\subsection{Galois objects}
Let $H$ be a Hopf algebra with bijective antipode and $A$ be a right $H$-comodule algebra 
with right $H$-coinvariants $A^{\co H}=\ku$. 

If there exist a convolution-invertible $H$-colinear map $\gamma:H\to A$, then $A$ is 
called a right \emph{cleft object}. The map 
$\gamma$ can be chosen so that $\gamma(1)=1$, in which case it is called a {\it section}.
In turn, $A$ is called a right 
\emph{$H$-Galois object} 
if the following linear map is bijective:
$$
\can:A\ot A\longmapsto A\ot H,\quad a\ot b\mapsto ab\_0 \ot b\_{1}.
$$
Analogously, left $H$-Galois objects are defined. Let
$L$ be another Hopf algebra. An
$(L,H)$-bicomodule algebra is an $(L,H)$-biGalois object if it is simultaneously a left 
$L$-Galois object and a right $H$-Galois object.

Assume $A$ is a right $H$-Galois object. There is an associated Hopf algebra $L(H,A)$ such 
that $A$ is a
$(L(A,H),H)$-biGalois object, see \cite[Section 3]{S}. $L(A,H)$ is a subalgebra
of
$A\ot
A^{\op}$. Moreover, if $L$ is a Hopf algebra
such that $A$ is $(L,H)$-biGalois then $L\cong  L(A,H)$. More precisely, if
$\delta$, $\delta_L$ stand for the coactions of $L(A,H)$ and $L$ in $A$, there is
a
Hopf algebra isomorphism $F:L(A,H)\to L$ such that $\delta_L=(F\ot \id)\delta$ and 
\begin{equation}\label{eqn:F}
F\left(\sum a_i\ot b_i\right)\ot 1_{A}=\sum\lambda_L(a_i)(1\ot b_i), \quad \sum a_i\ot
b_i\in L(A,H).
\end{equation}
Thus, one can use Galois objects  to find new examples  of Hopf algebras. Furthermore, 
$L(H,A)$ is a cocycle deformation of $H$ \cite[Theorem 
3.9]{S}.

\section{Nichols algebras and Racks}\label{subsec:relacion entre nichols}

From now on $\cC$ denotes a category of (left, right or left-right) Yetter-Drinfeld 
modules
over a finite-dimensional Hopf algebra $H$. Then $\cC$ is a 
braided monoidal category. Let
$c$ be the canonical braiding of $\cC$. See {\it e.~ g.} \cite{K} for details about
braided monoidal categories.

\smallbreak

Let $V\in\cC$. The tensor algebra $T(V)$ is an algebra in $\cC$. Also, $T(V)\ot T(V)$ is 
an algebra with multiplication
$(m\ot m)\circ(\id\ot\, c\ot\id)$.
Hence $T(V)$ becomes a Hopf algebra in $\cC$ extending by the universal property
the following maps
$$
\Delta(v)=v\ot1+1\ot v,\quad\varepsilon(v)=0\quad\mbox{and}\quad\cS(v)=-v,\quad v\in
V.
$$                                                            
Let $\cJ(V)$ be the largest 
Hopf ideal of $T(V)$ generated as an ideal by homogeneous 
elements of degree $\geq 2$.

\begin{fed}\cite[Proposition 2.2]{AS2}
The {\it Nichols algebra} of $V$ (in $\cC$) is $\B(V)=T(V)/\cJ(V)$. 
\end{fed}

See \cite{AS2} for details about Nichols algebras. Let $n\in\N$; we denote by 
$\mJ^n(V)$, resp. $\B^n(V)$, the homogeneous component of 
degree $n$ 
of $\cJ(V)$, resp. of $\B(V)$. We set $\mJ_n(V)=\langle\bigoplus_{l=2}^n 
\mJ^l(V)\rangle$ and $\widehat{\B_n}(V)=T(V)/\mJ_n(V)$.

Let $A$ be Hopf algebra such that $\gr A$ is isomorphic to $\B(V)\# H$. Then $A$ is called 
a {\it lifting of $\B(V)$ over $H$}. The {\it infinitesimal braiding of $A$} is 
$V\in\ydh$ with the braiding of $\ydh$. Recall from \cite[Proposition 2.4]{AV} 
that there 
exists a {\it lifting map} $\phi:T(V)\#H\rightarrow A$, that is an epimorphism of Hopf
algebras such that
\begin{align}\label{eq:properties of A and phi}
\phi_{|H}=\id,&& \phi_{|V\#H}\mbox{ is injective}&&\mbox{ and }&&\phi((\ku\oplus
V)\#H)=A_{[1]}.
\end{align}

\bigbreak

We recall another characterization of $\cJ(V)$, see {\it e.~ g.} \cite{AG1, AS2}.
Fix $n\in\N$. Let $\Bb_n$ be the {\it Braid group}: It is generated by 
$\{\sigma_i:1\leq i< n\}$
subject to the relations
$
\sigma_i\sigma_{i+1}\sigma_i=\sigma_{i+1}\sigma_i\sigma_{i+1}$
and $\sigma_i\sigma_j=\sigma_{j}\sigma_i$ for all $1\leq i, j< n$ such that
$|i-j|>1$. The projection $ \Bb_n\twoheadrightarrow\Sn_n$, $\sigma_i\mapsto(i\,i+1)$,
$1\leq i< n$, admits a set-theoretical section $s:\Sn_n\rightarrow\Bb_n$ defined by
$s(i\,i+1)=\sigma_i$, $1\leq i< n$, and $s(\tau)=\sigma_{i_1}\cdots\sigma_{i_{\ell}}$, if
$\tau=(i_1\,i_1+1)\cdots(i_{\ell}\,i_{\ell}+1)$ with $\ell$ minimum; this is the
{\it Matsumoto section}. The {\it quantum symmetrizer} is:
$$
\bS_n=\sum_{\tau\in\Sn_n}s(\tau)\in\ku\Bb_n.
$$
The group $\Bb_n$ acts on $V^{\ot n}$ via the assignment $\sigma_i\mapsto c_{i,i+1}$, 
$1\leq i< n$, where $c_{i,i+1}:V^{\ot n}\longrightarrow V^{\ot n}$ is the morphism
$$
\id\ot\, c\ot\id:V^{\ot i-1}\ot V^{\ot2}\ot V^{\ot n-i-1}\longrightarrow V^{\ot i-1}\ot
V^{\ot2}\ot V^{\ot n-i-1}.
$$
Then the homogeneous components of $\cJ(V)$ are given by
$$
\cJ^k(V)=\ker\bS_k,\quad k\in\N.
$$

\subsection{Correspondence between Nichols algebras in braided equivalent categories}
Let $H$, $\cC$ be as above. Let $H'$ be a finite-dimensional Hopf algebra, $\cC'$ be a
category of Yetter-Drinfeld modules over $H'$. Assume there is a
functor $(F,\eta):\cC\rightarrow\cC'$ of braided monoidal categories, {\it i. ~e.}
$F:\cC\rightarrow\cC'$ is a functor and $\eta:\ot\circ F^2\rightarrow F\circ\ot$ is a
natural isomorphism such that the diagrams 
\begin{align}\label{eq:F y eta}
\xymatrix{
F(U)\ot F(V)\ot F(W)\ar@{->}[rr]^{\eta\,\ot\, \id}\ar@{->}[d]_{\id\ot\,\eta} && F(U\ot
V)\ot F(W)\ar@{->}[d]^{\eta}&\\
F(U)\ot F(V\ot W)\ar@{->}[rr]_{\eta} && F(U\ot V\ot W),&
}
\end{align}
\begin{align}\label{eq:c y eta}
\xymatrix{
F(U)\ot F(V)\ar@{->}[rr]^{c_{F(U),F(V)}}\ar@{->}[d]_{\eta} && F(V)\ot 
F(U)\ar@{->}[d]^{\eta}&\\
F(U\ot V)\ar@{->}[rr]_{F(c_{U,V})} && F(V\ot U),
}
\end{align}
commute for each $U,V,W\in\cC$. 

Fix $V\in\cC$. For $m,n\in\N$, set $\eta_{m,n}=\eta_{V^{\ot m},
V^{\ot n}}$ and
\begin{align*}
\eta_n&=\eta_{n-1,1}(\eta_{n-2,1}\ot\id)\cdots(\eta_{2,1}\ot\id)(\eta\ot\id):F(V)^{\ot
n}\longrightarrow F(V^{\ot n}).
\end{align*}
By abuse of notation, we still write $\eta=\eta_{1,1}=\eta_2$. By \eqref{eq:F y eta}, it 
holds that
\begin{align}\label{eq:asociatividad de los etas}
\eta_{m+n+k}=\eta_{m,n+k}\,(\id\ot\eta_{n,k})\,(\eta_m\ot\eta_n\ot\eta_{
k}), \quad m,n,k\in\N. 
\end{align}
Note that $\Bb_n$ acts on $F(V^{\ot n})$ via $\sigma_i\mapsto
F(c_{i,i+1})$. Then the commutative diagram \eqref{eq:c y eta} implies that
$\eta$ is an isomorphism of $\Bb_2$-modules. Moreover, combining \eqref{eq:F y eta} and
\eqref{eq:c y eta} with the fact that $\eta$ is a natural isomorphism, we obtain that
$\eta_n:F(V)^{\ot n}\longrightarrow F(V^{\ot n})$ is an isomorphism of $\Bb_n$-modules in
$\cC'$. As a consequence we have the next lemma.

\begin{lem}\label{thm:generators of nichols algebras via functors general}
Assume $(F,\eta):\cC\rightarrow\cC'$ is exact. Let $V\in\cC$ with $\dim V<\infty$. The
ideals defining the Nichols algebras $\B(V)$ and $\B(F(V))$ are related by
$$
\cJ^n(F(V))=\eta_n^{-1}F(\cJ^n(V))\,\mbox{ for all }n\in\N.
$$
If $F$ preserves dimensions, then $\dim \B^n(V)=\dim \B^n(F(V))$ for all $n\in\N$.
\end{lem}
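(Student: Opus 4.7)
The plan is to exploit the quantum symmetrizer description $\cJ^n(V) = \ker \bS_n$ recorded just before the lemma, together with the fact---also established in the preceding discussion---that $\eta_n \colon F(V)^{\ot n} \to F(V^{\ot n})$ is an isomorphism of $\Bb_n$-modules.

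The first key observation I would make is that the $\Bb_n$-action on $F(V^{\ot n})$ defined by $\sigma_i \mapsto F(c_{i,i+1})$ turns $\bS_n$ into precisely the morphism $F(\bS_n)$ obtained by applying the functor $F$ to the quantum symmetrizer on $V^{\ot n}$; this is immediate from $\bS_n$ being a $\ku$-linear combination of products of the $\sigma_i$ and $F$ being a $\ku$-linear functor. Combined with the $\Bb_n$-equivariance of $\eta_n$, this yields the commutative square
$$
F(\bS_n)\circ \eta_n \;=\; \eta_n \circ \bS_n^{F(V)},
$$
where $\bS_n^{F(V)}$ denotes the symmetrizer acting on $F(V)^{\ot n}$ via $\sigma_i \mapsto c_{i,i+1}$ in $\cC'$. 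Since $\eta_n$ is an isomorphism, taking kernels gives $\cJ^n(F(V)) = \ker \bS_n^{F(V)} = \eta_n^{-1}(\ker F(\bS_n))$. By exactness of $F$, the kernel commutes with $F$, so $\ker F(\bS_n) = F(\ker \bS_n) = F(\cJ^n(V))$, and the first equality of the lemma follows.

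For the dimension statement, since $\eta_n$ is an isomorphism its restriction to $\eta_n^{-1}(F(\cJ^n(V)))$ is also an isomorphism onto $F(\cJ^n(V))$, so $\dim \cJ^n(F(V)) = \dim F(\cJ^n(V))$ and $\dim F(V)^{\ot n} = \dim F(V^{\ot n})$. Assuming $F$ preserves dimensions, one further has $\dim F(V^{\ot n}) = \dim V^{\ot n}$ and $\dim F(\cJ^n(V)) = \dim \cJ^n(V)$. Subtracting yields
$$
\dim \B^n(F(V)) \;=\; \dim F(V)^{\ot n} - \dim \cJ^n(F(V)) \;=\; \dim V^{\ot n} - \dim \cJ^n(V) \;=\; \dim \B^n(V).
$$

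I do not anticipate a serious obstacle: exactness of $F$, the $\Bb_n$-equivariance of $\eta_n$ coming from diagrams \eqref{eq:F y eta} and \eqref{eq:c y eta}, and the symmetrizer formula for $\cJ^n$ are all already in place. The one point that deserves a careful sentence is the identification of the abstract $\bS_n$-action on $F(V^{\ot n})$ with the concrete morphism $F(\bS_n)$; everything else is formal.
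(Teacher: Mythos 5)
Your proof is correct and follows the same route the paper takes: it uses the quantum-symmetrizer description $\cJ^n = \ker \bS_n$, the $\Bb_n$-equivariance of $\eta_n$ established just before the lemma, and exactness of $F$ so that kernels commute with $F$. You simply spell out the two-sentence argument in the paper, including the small but worthwhile point that additivity of $F$ identifies the $\bS_n$-action on $F(V^{\ot n})$ with $F(\bS_n)$.
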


\begin{proof}
Recall that $\cJ^n(F(V))$ is the kernel of $\bS_n$ acting on $F(V)^{\ot n}$, $n\in\N$.
Since $F$ is exact and $\eta_n$ is an isomorphism, the theorem follows.
\end{proof}

We can apply the above result to the categories $\ydh$ and $\ydhdual$. In fact, by
\cite[Proposition 2.2.1]{AG1} they are braided equivalent monoidal categories via the
functor $(F,\eta)$ defined as follows: $F(V)=V$ as a vector space,
\begin{align}\label{prop:equiv de cat gdual}
\begin{split}
f\cdot v&=\langle f,\cS(v\_{-1})\rangle 
v\_{0},\quad\delta(v)=f_i\ot\cS^{-1}(h_i)v\quad\mbox{and}\\
\noalign{\smallbreak}
\eta:\, &F(V)\ot F(W)\longmapsto F(V\ot W),\quad v\ot w\mapsto w_{(-1)}v\ot w_{(0)}
\end{split}
\end{align}
for every $V,W \in \ydh$, $f \in H^*$, $v \in V$,
$w\in W$. Here $\{h_i\}$ and $\{f_i\}$ are dual bases of $H$ and $H^*$.

\begin{lem}\label{lem:generators of nichols algebras via functors} 
Let $V\in\ydh$ of finite dimension and $M\subset V^{\ot n}$ in $\ydh$. Let 
$N=\bigoplus_{m\in\N} N^m$ with $N^m\subset V^{\ot m}$ in $\ydh$, $m\in\N$. Then
\begin{enumerate}\renewcommand{\theenumi}{\alph{enumi}}\renewcommand{\labelenumi}{
(\theenumi)} 
\item\label{eq:lem:generators of nichols algebras via functors tensor} $F(V)^{\ot 
m}\ot\eta_n^{-1}F(M)\ot F(V)^{\ot k}=(\eta_{m+n+k})^{-1}F(V^{\ot m}\ot
M\ot V^{\ot k})$.
\smallbreak
\item $\langle\eta_n^{-1}F(M)\rangle=\sum_{m,k}(\eta_{m+n+k})^{-1}F(V^{\ot m}\ot M\ot
V^{\ot k}).$
\smallbreak
\item\label{eq:lem:generators of nichols algebras via functors cocientes} Let 
$\overline{M}\subset
T(V)/\langle N\rangle$. In
$T(F(V))/\langle\bigoplus_m\eta_m^{-1}F(N^m)\rangle$ it holds that 
$\overline{\eta_n^{-1}F(M)}=\eta_n^{-1}F(\overline{M})$.
\end{enumerate}
\end{lem}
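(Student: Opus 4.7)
The plan is to derive (a) as the core step from the associativity identity \eqref{eq:asociatividad de los etas} together with the naturality of $\eta$, to read off (b) from the standard description of the two-sided ideal generated by a homogeneous subspace, and to deduce (c) from the exactness of $F$, which holds because $(F,\eta)\colon\ydh\to\ydhdual$ is an equivalence of braided monoidal categories.

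For (a), I would invert \eqref{eq:asociatividad de los etas} to obtain
$$
\eta_{m+n+k}^{-1}=(\eta_m^{-1}\ot\eta_n^{-1}\ot\eta_k^{-1})\,(\id\ot\eta_{n,k}^{-1})\,\eta_{m,n+k}^{-1},
$$
and apply the right-hand side to $F(V^{\ot m}\ot M\ot V^{\ot k})$, proceeding factor by factor. Naturality of $\eta_{V^{\ot m},-}$ applied to the inclusion $M\ot V^{\ot k}\hookrightarrow V^{\ot(n+k)}$, together with exactness of $F$ (to identify $F$ of a submodule with a submodule of $F$ of the ambient module), shows that $\eta_{m,n+k}^{-1}$ sends $F(V^{\ot m}\ot M\ot V^{\ot k})$ onto $F(V^{\ot m})\ot F(M\ot V^{\ot k})$. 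A second application of naturality, this time with respect to $M\hookrightarrow V^{\ot n}$, shows that $\id\ot\eta_{n,k}^{-1}$ carries the result onto $F(V^{\ot m})\ot F(M)\ot F(V^{\ot k})$. Finally, applying $\eta_m^{-1}\ot\eta_n^{-1}\ot\eta_k^{-1}$ yields $F(V)^{\ot m}\ot\eta_n^{-1}F(M)\ot F(V)^{\ot k}$, which is (a).

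Part (b) is then immediate: for any homogeneous subspace $X\subset T(F(V))$ the two-sided ideal $\langle X\rangle$ equals $\sum_{m,k\geq 0}F(V)^{\ot m}\ot X\ot F(V)^{\ot k}$, and specializing to $X=\eta_n^{-1}F(M)$ and invoking (a) term by term produces the displayed sum.

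For (c), the plan is to combine (b) with exactness of $F$. By (b), the $n$-th graded component of $\langle\bigoplus_m\eta_m^{-1}F(N^m)\rangle$ coincides with $\eta_n^{-1}F(\langle N\rangle^n)$, where $\langle N\rangle^n$ is the $n$-th graded piece of $\langle N\rangle\subset T(V)$. Writing $\overline{M}=M/(M\cap\langle N\rangle^n)$ and using that $F$ preserves kernels and images, hence intersections, one gets
$$
F(\overline{M})=F(M)/F(M\cap\langle N\rangle^n)=F(M)/\bigl(F(M)\cap F(\langle N\rangle^n)\bigr),
$$
and pulling back through the isomorphism $\eta_n^{-1}$ gives $\overline{\eta_n^{-1}F(M)}=\eta_n^{-1}F(\overline{M})$. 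The main obstacle is the bookkeeping in (a): once the interplay between \eqref{eq:asociatividad de los etas}, the naturality of $\eta$, and the exactness of $F$ is executed cleanly, parts (b) and (c) are formal consequences of ideal generation and of $F$ commuting with quotients.
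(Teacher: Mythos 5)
Your proof is correct and follows essentially the same route as the paper's: the key step (a) rests on inverting the associativity identity \eqref{eq:asociatividad de los etas} and using naturality together with exactness of $F$, while (b) and (c) are formal consequences; the paper's own proof of (a) is a terser version of the same argument (tracking elements through $\eta_{m+n+k}^{\pm 1}$ restricted to $V^{\ot m}\ot M\ot V^{\ot k}$) and simply declares (b) and (c) ``straightforward,'' whereas you spell them out explicitly, correctly invoking that $F$ preserves intersections when passing to the quotient in (c).
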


\begin{proof}
(a) Let $x\in V^{\ot m}$, $r\in M$ and $y\in V^{\ot k}$. By \eqref{eq:asociatividad de los
etas}, there exist $x'\in V^{\ot m}$, $r'\in M$ and $y'\in V^{\ot k}$ such that
$(\eta_{m+n+k})^{-1}(x\ot r\ot y)=\eta_m^{-1}(x')\ot \eta_n^{-1}(r')\ot \eta_{k}^{-1}
(y')$. Also by \eqref{eq:asociatividad de los etas}, there exist $x''\in V^{\ot m}$,
$r''\in M$ and $y''\in V^{\ot k}$ such that $\eta_{m+n+k}(x\ot r\ot y)=x''\ot r''\ot y''$.
Since $(\eta_{m+n+k})^{\pm1}_{|V^{\ot m}\ot M\ot V^{\ot k}}$ are injective
morphisms the statement follows. (b) and (c) are straightforward.
\end{proof}

Lemma \ref{lem:generators of nichols algebras via 
functors} \eqref{eq:lem:generators of nichols algebras via functors cocientes} is useful to 
find deformations of Nichols algebras. Next lemma is a consequence of Lemma \ref{lem:generators of nichols algebras via 
functors} \eqref{eq:lem:generators of nichols algebras via functors tensor}.

\begin{lem}\label{thm:generators of nichols algebras via functors} 
Let $M=\bigoplus_{m\in\N} M^m$ with $M^m\subset V^{\ot m}$ in $\ydh$, $m\in\N$. Assume 
that $M$ generates $\cJ(V)$ as an ideal. Then
\begin{enumerate}\renewcommand{\theenumi}{\alph{enumi}}\renewcommand{\labelenumi}{
(\theenumi)} 
\item\label{eq:generadors thm:generators of nichols algebras via functors}
$\bigoplus_{m\in\N}\eta_m^{-1}F(M^m)\in \ydhdual$ generates $\cJ(F(V))$ as an ideal.
\smallbreak
\item\label{eq:pre generatos thm:generators of nichols algebras via functors} 
$\cJ_k(F(V))=\langle\bigoplus_{l=2}^k\eta_l^{-1}F(\cJ^l(V))\rangle$ for all $k\in\N$. \qed
\end{enumerate}
\end{lem}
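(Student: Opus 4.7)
The plan is to reduce both parts to Lemma \ref{thm:generators of nichols algebras via functors general} and Lemma \ref{lem:generators of nichols algebras via functors}. Part (b) should fall out immediately: the general lemma already gives $\cJ^l(F(V))=\eta_l^{-1}F(\cJ^l(V))$ for each $l$, and by definition $\cJ_k(F(V))$ is the ideal of $T(F(V))$ generated by $\bigoplus_{l=2}^k \cJ^l(F(V))$, so the two descriptions match term by term.

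For part (a), I would start from the hypothesis that $M$ generates $\cJ(V)$ as a two-sided ideal of $T(V)$, which at the level of degree-$n$ homogeneous components means
$$\cJ^n(V)=\sum_{\substack{m+l+k=n\\l\geq 2}} V^{\ot m}\ot M^l\ot V^{\ot k},\qquad n\in\N.$$
Apply the exact functor $F$ and the linear isomorphism $\eta_n^{-1}$ to both sides, commuting them past the sum, and then invoke Lemma \ref{lem:generators of nichols algebras via functors}\eqref{eq:lem:generators of nichols algebras via functors tensor} term by term to rewrite each summand as $F(V)^{\ot m}\ot\eta_l^{-1}F(M^l)\ot F(V)^{\ot k}$. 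The left-hand side becomes $\eta_n^{-1}F(\cJ^n(V))=\cJ^n(F(V))$ by Lemma \ref{thm:generators of nichols algebras via functors general}, while the right-hand side is exactly the degree-$n$ slice of the ideal generated in $T(F(V))$ by $\bigoplus_{l\in\N}\eta_l^{-1}F(M^l)$, by Lemma \ref{lem:generators of nichols algebras via functors}(b). Since this matching of homogeneous pieces holds for every $n$, the two ideals agree, which is (a).

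The only (mild) subtlety to keep an eye on is that the generating set $\bigoplus_l\eta_l^{-1}F(M^l)$ must live in $\ydhdual$ so that the ideal it generates is really an ideal of $T(F(V))$ inside $\cC'$; this is automatic since $F$ is a functor of braided monoidal categories between Yetter-Drinfeld categories and each $\eta_l$ is a morphism in $\cC'$. Beyond this there is no substantial obstacle: the entire argument is careful bookkeeping with the two preceding lemmas, the exactness of $F$, and the decomposition of an ideal into its homogeneous components.
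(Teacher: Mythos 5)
Your argument is correct and follows exactly the route the paper intends: the paper states the lemma without proof, flagging only that it is ``a consequence of Lemma \ref{lem:generators of nichols algebras via functors}\eqref{eq:lem:generators of nichols algebras via functors tensor}'', and your degree-by-degree bookkeeping with Lemma \ref{thm:generators of nichols algebras via functors general}, Lemma \ref{lem:generators of nichols algebras via functors}\eqref{eq:lem:generators of nichols algebras via functors tensor} and its part (b) is precisely the missing verification.
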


\subsection{Racks}\label{subsec:YD realization}
A {\it rack} is a nonempty set $X$ with an operation $\rhd:X\times X\rightarrow
X$ such that 
$$\phi_i:X\longmapsto X,\, j\mapsto i\rhd j,$$
is a bijective map and $\phi_i(j\rhd k)=\phi_i(j)\rhd\phi_i(k)$ for all $i,j,k\in X$. The 
subgroup of $\mathbb{S}_{X}$ generated  by $\{\phi_i\}_{i\in X}$ is denoted $\Inn _\rhd 
X$, it is a
subgroup of the group of rack automorphisms $\Aut_\rhd X$. 

A function $q:X\times X\rightarrow\ku^*$, $(i,j)\mapsto q_{ij}$, is a ({\it rack}) {\it 
2-cocycle} if $q_{i,j\rhd k}q_{j,k}=q_{i\rhd j,i\rhd k}q_{i, k}$ for all $i,j,k\in X$. We 
refer to \cite{AG2} for details about racks.

\begin{fed}\cite[Definition 3.2]{AG3}, \cite[Subsection 5]{MS}. Let $X$ be a rack and $q$ 
be a $2$-cocycle on $X$. A \emph{principal 
YD-realization} of $(X,q)$ over a finite group $G$ is a collection $(\cdot, g, 
\{\chi_i\}_{i\in X})$ where
\begin{itemize}
\item $\cdot$ is an action of $G$ on $X$;
\smallbreak
\item $g:X\longmapsto G$, $i\mapsto g_i$, is a function such that $g_{h\cdot i} = hg_ 
{i}h^{-1}$ and $g_{i}\cdot j=i\rhd j$ for all
$i,j\in X$ and $h\in G$;
\smallbreak
\item\label{item:1-cocycle} $\{\chi_i\}_{i\in X}$ is a {\it 1-cocycle} -- that is a family
of maps
$\chi_i:G\to\k^*$ such that $\chi_i(ht)=\chi_{t\cdot i}(h)\chi_i(t)$ for all $i\in
X$, $h,t\in G$ --  satisfying $\chi_i(g_{j})=q_{ji}$ for all
$i,j\in X$.
\end{itemize}
We will assume that all realizations are \emph{faithful}, that is $g$ is injective. 
\end{fed}

These data define an object $V(X,q)\in\ydg$ \cite{AG3}. Namely, as a vector space 
$V(X,q)=\ku \{x_i\}_{i\in X}$, the action and coaction are
\begin{align}\label{eqn:yetter-drinfeld}
t\cdot x_{i}=\chi_i(t)x_{t\cdot i}\quad\mbox{and}\quad\lambda(x_i)=g_i\ot
x_i,\quad t\in G, i\in X.
\end{align}
We denote by $T(X,q)$ the tensor algebra of $V(X,q)$, its Nichols algebra is 
denoted by $\B(X,q)$ and the defining ideal is $\cJ(X,q)$.

\smallbreak

Let $W(q,X)$ be the object in $\ydgdual$ obtained by applying the functor 
\eqref{prop:equiv de cat gdual} to the above Yetter-Drinfeld module $V(X,q)$ over $\ku G$. Then
\begin{align}\label{eqn:yetter-drinfeld-dual}
\delta_t\cdot x_i=\delta_{t,g_i^{-1}}x_i\quad\mbox{and}\quad \lambda(x_i)=\sum_{t\in
G}\chi_i(t^{-1})\delta_t\ot x_{t^{-1}\cdot  i}, \quad t\in G, i\in X.
\end{align}
We denote by $T(q,X)$ the tensor algebra of $W(q,X)$, its Nichols algebra is 
denoted by $\B(q,X)$ and the defining ideal is $\cJ(q,X)$.

\smallbreak

Note that the smash product Hopf algebra $T(X,q)\#\ku G$ satisfies
\begin{align}\label{eq:TV smash ku de G}
tx_i=\chi_i(t)x_{t\cdot i}t \quad\mbox{and}\quad\Delta(x_i)=x_i\ot 1 +{g_i} \ot
x_i,\quad t\in G, i\in X.
\end{align}
The smash product Hopf algebra $T(q,X)\#\ku^G$ satisfies for all $t\in G$, $i\in X$
\begin{align}\label{eq:TV smash ku a la G}
\delta_tx_i=x_i\delta_{g_it}\quad\mbox{and}\quad\Delta(x_i)=x_i\ot 1 +\sum_{t\in
G}\chi_i(t^{-1})\delta_t\ot x_{t^{-1}\cdot
i}.
\end{align}

To find all the groups $G$ supporting a principal YD-realization of $(X,q)$ presents
hard computational aspects \cite[Section 3]{AG3}, see {\it e.~g.} Lemma \ref{le:the
enveloping se proyecta} (c) below. A possible approach is the
following. Let $F(X)$ be the free group generated by $\{g_i\}_{i\in X}$. The {\it
enveloping group $G_X$ of $X$}, see \cite{EG,J}, is
\begin{align}\label{eq:enveloping group}
G_X=F(X)/\langle g_ig_j-g_{i\rhd j}g_i:\, i,j\in X\rangle.
\end{align}

If $X$ is finite and indecomposable, then the order $n$ of $\phi_i$ does not depend on
$i\in X$ and is called the {\it degree} of the rack, see \cite[Definition 2.18]{HLV}, also
\cite{GLV}. Thus, there is a series of finite versions of $G_X$, given by 
$$
G^k_X=G_X/\langle g_i^{kn},\, i\in X\rangle, \quad k\in\N.
$$
$G^1_X$ is denoted by $\overline{G_X}$ and called the finite enveloping group of $X$ in
\cite{HLV}.

\begin{lem}\label{le:the enveloping se proyecta}
Let $X$ be a faithful and indecomposable rack of degree $r$ with a 2-cocycle $q$. Let
$(\cdot,g,\{\chi_i\}_{i\in X})$ be a principal YD-realization of $(X,q)$ over a finite
group $G$ and $K\subset G$ be the subgroup generated by $\{g_i:\,i\in X\}$. Then
\begin{enumerate}\renewcommand{\theenumi}{\alph{enumi}}\renewcommand{\labelenumi}{
(\theenumi)}
\item\label{item:le:the enveloping se proyecta} $K$ is normal and a
quotient of $G_X^r$. 
\item\label{ite:Autle:the enveloping se proyecta}\cite[Lemma 3.3(c)]{AG3} $G$ acts by rack
automorphisms on $X$.
\item\label{ite:chiG:the enveloping se proyecta}\cite[Lemma 3.3(d)]{AG3} If $q$ is
constant, then there exists a multiplicative character $\chi_G:G\rightarrow\ku^*$
such that $\chi_G=\chi_{i}$ for all $i\in X$. 
\end{enumerate}
\end{lem}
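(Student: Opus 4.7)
Statements (b) and (c) are quoted verbatim from \cite[Lemma 3.3]{AG3}, so only (a) requires proof. It splits into two parts: normality of $K$ in $G$ and the existence of an epimorphism $G_X^r\twoheadrightarrow K$.

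Normality is immediate from the compatibility axiom $g_{h\cdot i}=hg_ih^{-1}$ built into the definition of a principal YD-realization: for any $h\in G$ and $i\in X$, the conjugate $hg_ih^{-1}=g_{h\cdot i}$ is again a generator of $K$, so $hKh^{-1}\subseteq K$ and hence $K\triangleleft G$.

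For the quotient statement, I first verify the defining relations of the enveloping group $G_X$ from \eqref{eq:enveloping group}. Combining $g_i\cdot j=i\rhd j$ with the identity of the previous paragraph yields
\[
g_{i\rhd j}=g_{g_i\cdot j}=g_ig_jg_i^{-1}, \qquad i,j\in X,
\]
i.e., $g_ig_j=g_{i\rhd j}g_i$ holds in $K$, so the assignment $g_i\mapsto g_i$ extends to a surjective group morphism $\pi:G_X\twoheadrightarrow K$. To factor $\pi$ through $G_X^r$ one needs a power relation of the form $g_i^{N}=1$ in $K$ for each $i\in X$ with $N$ the exponent dictated by the definition of $G_X^r$. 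Since $g_i$ acts on $X$ as $\phi_i\in\Inn_\rhd X$, an element of order $r$, the power $g_i^r$ lies in the kernel of the $G$-action on $X$, and the direct computation $g_i^r g_j g_i^{-r}=g_{g_i^r\cdot j}=g_j$ shows that $g_i^r\in Z(K)$. Indecomposability of $X$ makes the $G$-action transitive, so all $g_i$ are $G$-conjugate and share a common order in $K$; combining the centrality of $g_i^r$ with the finiteness of $K$ then supplies the required relation. The main obstacle in the argument is precisely this last quantitative step: normality and the identification of the enveloping relations are a formal translation of the realization axioms, but extracting the specific exponent required by $G_X^r$ demands using centrality together with the indecomposability and finiteness hypotheses, rather than the $G$-action on $X$ alone.
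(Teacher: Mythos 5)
Your arguments for normality (via $g_{h\cdot i}=hg_ih^{-1}$) and for the enveloping-group relations $g_ig_j=g_{i\rhd j}g_i$ are correct and coincide with the paper's proof, which reads in full: ``Clearly $K$ is normal. As $X$ is faithful, the map $g:X\to G$ is injective and thus we have an epimorphism $F(X)\to K$. Since the relations defining $G_X^r$ are satisfied in $K$, the epimorphism factorizes through $G_X^r$.'' Your further observations that $g_i^r$ is central in $K$ (since $g_i^r$ acts trivially on $X$) and that indecomposability forces all the $g_i$ to be conjugate, hence of a common order, are also correct and go slightly beyond what the paper writes.

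The gap is in your final sentence: ``combining the centrality of $g_i^r$ with the finiteness of $K$ then supplies the required relation.'' This is not a valid deduction. Using the paper's definition $G_X^k=G_X/\langle g_i^{k\cdot(\text{degree})}:i\in X\rangle$ with degree equal to $r$, the relation that must be verified to factor through $G_X^r$ is $g_i^{r^2}=e$ in $K$, and centrality together with finiteness place no bound whatsoever on the order of the central element $g_i^r$. Concretely, in the $(m,0)$-affine realization of $(\Aff(\F_3,2),-1)$ over $\F_3\rtimes C_{2m}$ from Proposition \ref{pro:affine-realization} one has $g_a=a\times t$, $g_a^r=g_a^2=0\times t^2$, which has order $m$ in the central copy of $C_{2m}$, so $g_a^{r^2}=g_a^4\neq e$ as soon as $m>2$. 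Thus the exponent required by $G_X^r$ cannot be extracted from centrality and finiteness alone. To be fair, the paper's own proof simply \emph{asserts} that ``the relations defining $G_X^r$ are satisfied in $K$'' without any justification, so this gap is present there as well; but your proposed repair does not close it, and the specific exponent claimed in part~(a) deserves closer scrutiny.
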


\pf
\eqref{item:le:the enveloping se proyecta} Clearly $K$ is normal. As $X$ is faithful, the 
map
$g:X\to G$ is injective and thus we have an epimorphism $F(X)\to K$. Since the relations
defining $G_X^r$ are satisfied in $K$, the epimorphism factorizes through
$G_X^r$.
\epf

\begin{lem}\label{le:neq in group rack}
Let $(X,q)$, $(\cdot,g,\{\chi_i\}_{i\in X})$ and $K$ be as in the above lemma.
\begin{enumerate}\renewcommand{\theenumi}{\alph{enumi}}\renewcommand{\labelenumi}{
(\theenumi)}
\item\label{item:general le:neq in group rack} If $i\rhd j\neq j$, then $g_i^{\ell}\neq 
g_j$ for all $\ell\in\Z$. In particular,
$g_ig_j\neq e$.
\smallbreak
\item\label{item:no F3 2 le:neq in group rack} Let $i,j\in X$ and $\ell\in\Z$ be such that 
$\phi_i^\ell(j)\neq j$. Then $g_i^\ell\neq e$.
\end{enumerate}
Assume $q\equiv \xi$ is constant, for an $n$th root of unity $\xi$.
\begin{itemize}
\item[(c)] If $n_1+\cdots+n_a\not\equiv m_1+\cdots+m_b\mod n$, then $g_{i_1}^{n_1}\cdots 
g_{i_a}^{n_a}\neq g_{j_1}^{n_1}\cdots g_{j_b}^{n_b}$.
\smallbreak
\item[(d)] $(\chi_{G|K})^n=\eps$.
\end{itemize}
\end{lem}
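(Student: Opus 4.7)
The plan is to exploit two structural consequences of the YD-realization: that the assignment $i\mapsto g_i$ transports the rack operation on $X$ to conjugation inside $K$, and that the $G$-action on $X$ restricts, through the $g_i$, to this conjugation action. Indeed, applying the axiom $g_{h\cdot i}=hg_ih^{-1}$ with $h=g_i$ yields $g_{i\rhd j}=g_ig_jg_i^{-1}$, so by faithfulness of $g$ the equality $i\rhd j=j$ is equivalent to $g_i$ and $g_j$ commuting in $K$. This dictionary is the one substantive point of the proof.

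With it, (a) follows by contraposition: if $g_j=g_i^\ell$ for some $\ell\in\Z$, then $g_i$ and $g_j$ are powers of a common element and hence commute, forcing $i\rhd j=j$. The particular case $g_ig_j\ne e$ is the instance $\ell=-1$. For (b) the argument is even more direct: if $g_i^\ell=e$, then $\phi_i^\ell(j)=g_i^\ell\cdot j=e\cdot j=j$, contradicting the assumption $\phi_i^\ell(j)\ne j$.

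For (c) and (d), I invoke the multiplicative character $\chi_G:G\to\ku^*$ provided by Lemma \ref{le:the enveloping se proyecta}\eqref{ite:chiG:the enveloping se proyecta}, which, under the hypothesis that $q\equiv\xi$, satisfies $\chi_G(g_i)=\chi_i(g_i)=q_{ii}=\xi$ for every $i\in X$. For (c), evaluating $\chi_G$ at the putative identity $g_{i_1}^{n_1}\cdots g_{i_a}^{n_a}=g_{j_1}^{m_1}\cdots g_{j_b}^{m_b}$ produces $\xi^{n_1+\cdots+n_a}=\xi^{m_1+\cdots+m_b}$; since $\xi$ has order $n$, this forces $n_1+\cdots+n_a\equiv m_1+\cdots+m_b\pmod n$, and the statement follows by contrapositive. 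For (d), on each generator of $K$ one computes $\chi_G^n(g_i)=\xi^n=1=\eps(g_i)$; since both $\chi_G^n$ and $\eps$ are multiplicative, they agree on all of $K$.

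No real obstacle is expected: the lemma is a careful unpacking of the axioms of a principal YD-realization, and everything reduces to the translation of the rack identity $i\rhd j=j$ into commutation of the corresponding realizing elements.
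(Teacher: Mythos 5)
Your proof is correct and follows essentially the same route as the paper's: translate $i\rhd j$ into conjugation $g_ig_jg_i^{-1}=g_{i\rhd j}$, use faithfulness of $g$, and apply the character $\chi_G$ for (c) and (d). The only cosmetic difference is in (b), where you use the $G$-action identity $\phi_i^\ell(j)=g_i^\ell\cdot j$ directly while the paper iterates $g_ig_j=g_{i\rhd j}g_i$ to get $g_i^\ell g_j=g_{\phi_i^\ell(j)}g_i^\ell$; these are the same observation.
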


\begin{proof}
\eqref{item:general le:neq in group rack}-\eqref{item:no F3 2 le:neq in group rack} We 
show that if the equality holds then
$g_i=g_j$. Notice that $g_ig_j=g_{i\rhd j}g_i$ for all $i,j\in X$. 
If $g_j=g_i^\ell$, then $g_j=g_i(g_i^\ell)g_i^{-1}=g_ig_jg_i^{-1}=g_{i\rhd j}$
but $j\neq i\rhd j$. 
In particular, $g_i^{-1}\neq g_j$ and hence $e\neq g_ig_j$.
If $e=g_i^\ell$, then $g_j=g^\ell_ig_j=g_{\phi_i^\ell(j)}g_i^\ell=g_{\phi_i^\ell(j)}$ but
$j\neq\phi_i^\ell(j)$. (c) Apply the multiplicative character
$\chi_G$. (d) is immediate.
\end{proof}

\subsection{The dual rack}\label{subsec:dual rack}

Fix a finite rack $(X,\rhd)$. The {\it dual rack} $X^*$ is 
the pair 
$(X,\rhd^{-1})$ where 
$$i\rhd^{-1}j=\phi_i^{-1}(j)\quad\mbox{for all}\quad i,j\in X.$$
Fix a $2$-cocycle $q$ on $X$ and a principal YD-realization $(\cdot,g,\{\chi_i\}_{i\in 
X})$ of $(X,q)$ over a finite group $G$.
Let $q^*:X\times X\rightarrow\ku^*$ be the $2$-cocycle on $X^*$ given by
$$
q^*_{i,j}=q_{i,i\rhd^{-1}j}\quad\mbox{for all}\quad i,j\in X.
$$
Then the dual object to $V(X,q)$ in $\ydg$ (resp. $W(q,X)$ in $\ydgdual$) is isomorphic to the object 
$V(X^*,q^*)$ in $\ydg$ (resp. $W(q^*,X^*)$ in $\ydgdual$) attached to the principal YD-realization 
$(\cdot,g^{-1},\{\chi^{-1}_i\}_{i\in X})$ over $G$, see for example 
\cite[Equation (1)]{GG}.

\smallbreak

We set $q^{-*}:=(q^*)^{-1}$. It is easy to see that $q^{-*}$ is a $2$-cocycle on $X^*$ 
and that 
$(\cdot,g^{-1},\{\chi_i\}_{i\in X})$ is a principal YD-realization of $(X^*,q^{-*})$ over 
$G$. 

\smallbreak

Let $V(X,q),V(X^*,q^{-*})\in\ydg$ be defined by \eqref{eqn:yetter-drinfeld} for 
$(\cdot,g,\{\chi_i\}_{i\in X})$ and $(\cdot,g^{-1},\{\chi_i\}_{i\in X})$, respectively. We 
denote by $\{y_i\}_{i\in X}$ the basis of $V(X^*,q^{-*})$.
We define the linear map $\mathfrak{c}:T(X,q)\rightarrow T(X^*,q^{-*})$ as follows: 
$\fc(1)=1$,
\begin{align*}
\fc(x_i)&=y_i& &\mbox{if }\,i\in X\,\mbox{ and }\\
\fc(m\,r)&=\fc(m\_{0})\,(m\_{-1}\cdot\fc(r))&&\mbox{if }\,m,r\in T(X,q).
\end{align*}
It is easy to see that $\fc$ is well defined.

\begin{pro}\label{prop:rel of Xdual q over G} 
Let $S$ be a set of generators of the defining ideal $\cJ(X,q)$ of $\B(X,q)\in\ydg$. Then 
the defining ideal of $\B(X^*,q^{-*})\in\ydg$ satisfies $\cJ(X^*,q^{-*})=\fc(\cJ(X,q))$ 
and it is generated by $\fc(S)$.
\end{pro}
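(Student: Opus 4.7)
The plan is to derive both assertions from the characterization $\cJ^n(V)=\ker\bS_n$ on $V^{\ot n}$, tracking carefully how $\fc$ interacts with the quantum symmetrizer.

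First I would verify that $\fc$ is well defined, degree preserving, $G$-equivariant, and restricts to a linear isomorphism $\fc_n\colon V(X,q)^{\ot n}\to V(X^*,q^{-*})^{\ot n}$ at each degree. Unfolding the recursion $\fc(mr)=\fc(m\_{0})(m\_{-1}\cdot\fc(r))$ on a monomial $x_{i_1}\cdots x_{i_n}$ yields
\[
\fc(x_{i_1}\cdots x_{i_n})=\lambda(i_1,\dots,i_n)\,y_{i_1}\,y_{i_1\rhd i_2}\,y_{i_1\rhd(i_2\rhd i_3)}\cdots
\]
with $\lambda(i_1,\dots,i_n)\in\ku^*$ an explicit product of values of $q$. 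Since each $\phi_i$ is a bijection of $X$, the reindexing on the right is a bijection of bases, so $\fc_n$ is a linear isomorphism; $G$-equivariance follows from the 1-cocycle identity for $\{\chi_i\}$ together with the fact that $G$ acts by rack automorphisms.

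The crux of the proof is then
\[
\fc_n\bigl(\ker\bS_n^{V(X,q)}\bigr)=\ker\bS_n^{V(X^*,q^{-*})},
\]
which I would establish by induction on $n$, using the defining recursion of $\fc$ alongside the recursive product structure of $\bS_n$. The inductive step reduces to the case of a single braid crossing, where a direct comparison of
\[
c^V(x_i\ot x_j)=q_{ij}\,x_{i\rhd j}\ot x_i\quad\text{and}\quad c^W(y_i\ot y_j)=q^{-*}_{ji}\,y_{i\rhd^{-1}j}\ot y_i,
\]
together with the rack identity $i\rhd^{-1}(i\rhd j)=j$ and the cocycle relation between $q$ and $q^{-*}$, identifies $\fc_2(\ker(1+c^V))$ with $\ker(1+c^W)$. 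I expect this compatibility to be the main technical obstacle; it requires a careful bookkeeping of the cocycle values along the reindexing induced by the rack action, and in particular the verification of a cocycle-type identity that ensures the kernels match despite $\fc_n$ not literally intertwining the two symmetrizers.

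Granting this, the identity $\cJ(X^*,q^{-*})=\fc(\cJ(X,q))$ follows at once. For the generation statement, iterating the twisted-multiplicative recursion gives
\[
\fc(asb)=\fc(a\_{0})\bigl(a\_{-1}\cdot\fc(s\_{0})\bigr)\bigl(a\_{-1}s\_{-1}\cdot\fc(b)\bigr),\qquad a,b\in T(X,q),\;s\in S,
\]
so $\fc(\langle S\rangle)$ lies in the ideal of $T(X^*,q^{-*})$ generated by the $G$-orbit of $\fc(S)$, and since this orbit is already contained in the YD-ideal $\langle\fc(S)\rangle$ we get $\fc(\langle S\rangle)\subseteq\langle\fc(S)\rangle$. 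Combining this with $\fc(\langle S\rangle)=\fc(\cJ(X,q))=\cJ(X^*,q^{-*})$ and the obvious inclusion $\langle\fc(S)\rangle\subseteq\cJ(X^*,q^{-*})$ yields $\langle\fc(S)\rangle=\cJ(X^*,q^{-*})$, completing the proof.
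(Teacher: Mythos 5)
Your approach is genuinely different from the paper's, and it contains an acknowledged but real gap. The paper's proof is structural: it identifies $\fc$ with the restriction of an explicit Hopf algebra isomorphism $\vartheta\colon(\B(X,q)\#\ku G)^{\cop}\to\B(X^*,q^{-*})\#\ku G$, $x_i\#g\mapsto y_i\#g_ig$, obtained by decomposing the co-opposite bosonization over the cocommutative $\ku G$ and using that Nichols algebras are determined by generation in degree one. For $m$ homogeneous with $m\_{-1}\ot m\_{0}=g_m\ot m$ one has $\vartheta(m\#1)=\fc(m)\# g_m$, so the ideal identity falls out of $\vartheta$ being a Hopf algebra map. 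You instead propose to prove $\fc_n\bigl(\ker\bS_n^{V(X,q)}\bigr)=\ker\bS_n^{V(X^*,q^{-*})}$ directly from the quantum-symmetrizer description of $\cJ^n$, by induction on $n$.

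The gap is that this crucial equality is never established: you write that you \emph{would} establish it and that you \emph{expect} the compatibility to be the main obstacle, which is an honest flag but not a proof. Moreover the inductive scheme you sketch is more delicate than you suggest. The "reduction to a single braid crossing" cannot be a literal conjugation, because $\fc_2$ does \emph{not} intertwine the two braidings: along a $\rhd$-orbit, with $v_h=x_{i_h}\ot x_{i_{h-1}}$ and $w_h=y_{i_h}\ot y_{i_{h+1}}$ one computes $c_V(v_h)=q_{i_h,i_{h-1}}v_{h+1}$ whereas $\fc_2^{-1}c_W\fc_2(v_h)=q_{i_{h-1},i_{h-2}}^{-1}v_{h-1}$, i.e.\ the conjugated braiding is a cyclic shift in the \emph{opposite} direction. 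One can still check by hand that the two $(-1)$-eigenspaces correspond under $\fc_2$ (the coefficient recursions coincide), but this is precisely the bookkeeping you defer, and in higher degree the symmetrizer $\bS_n$ is a sum over $|\Sn_n|$ braids whose interaction with the non-multiplicative $\fc$ is far from a one-line matter. Making your route rigorous would in effect require reproving that $\fc$ extends to a coalgebra isomorphism between $(T(X,q)\#\ku G)^{\cop}$ and $T(X^*,q^{-*})\#\ku G$ — which is exactly what the paper's $\vartheta$ supplies and what renders the symmetrizer computation unnecessary. Your closing argument deducing the generation statement from $\fc(\langle S\rangle)\subseteq\langle\fc(S)\rangle$ via the twisted multiplicativity of $\fc$ is correct, but it is downstream of the unproven ideal identity.
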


\begin{proof}
We consider the co-opposite Hopf algebra $(\B(X,q)\#\ku G)^{\cop}$. As $\ku G$ is 
cocommutative, $(\B(X,q)\#\ku G)^{\cop}\simeq R\#\ku G$ for some graded braided Hopf 
algebra $R\in\ydg$. Moreover, $R$ is the Nichols algebra of $\cP(R)\in\ydg$ because 
$(\B(X,q)\#\ku G)^{\cop}$ is generated as an algebra by the first term of its coradical 
filtration which is $(\B(X,q)\#\ku G)_{[1]}$.

Now, $\cP(R)=\ku\{x_i\#g_i^{-1}\}_{i\in X}$ with coaction 
$\lambda(x_i\#g_i^{-1})=g_i^{-1}\ot x_i\#g_i^{-1}$ and action 
$g\cdot(x_i\#g_i^{-1})=gx_i\#g_i^{-1}g^{-1}=\chi_i(g)x_{g\cdot i}\#g_{g\cdot i}^{-1}$ for 
all $i\in X$, $g\in G$. Then $\cP(R)\simeq V(X^*,q^{-*})$ in $\ydg$ via the assignment 
$x_i\#g_i^{-1}\mapsto y_i$ for all $i\in X$. Therefore
$$
\vartheta:(\B(X,q)\#\ku G)^{\cop}\longrightarrow\B(X^*,q^{-*})\#\ku G,\,\, x_i\#g \mapsto 
y_i\#g_ig\quad i\in X,\,g\in G
$$
is a Hopf algebra isomorphism. Let $m\in\cJ(X,q)$ be such that $m\_{-1}\ot m\_{0}=g_m\ot 
m$. Then $0=\vartheta(m)=\fc(m)\# g_m$ and hence $\fc(m)\in \cJ(X^*,q^{-*})$. This shows 
that $\fc(\cJ(X,q))\subseteq\cJ(X^*,q^{-*})$ and the other inclusion is proved in a 
similar way. The definition of $\fc$ implies the last statement.
\end{proof}

Now, we consider $W(q,X),W(q^{-*},X^*)\in\ydgdual$ according to 
\eqref{eqn:yetter-drinfeld-dual}. Let $(\,)^{\mathfrak{op}}:T(q,X)\rightarrow 
T(q^{-*},X^*)^{\op}$ be the algebra map given by $x_{i}^{\fop}=y_{i}$ for all $i\in X$, 
here $T(q^{-*},X^*)^{\op}$ is the opposite algebra of $T(q^{-*},X^*)$.

\begin{pro}\label{prop:rel of Xdual q over kalaG} 
Let $S$ be a set of generators of the defining ideal $\cJ(q,X)$ of $\B(q,X)\in\ydgdual$. 
Then the defining ideal of $\B(q^{-*},X^*)\in\ydgdual$ satisfies 
$\cJ(q^{-*},X^*)=(\cJ(q,X))^{\fop}$ and is generated by $S^{\fop}$.
\end{pro}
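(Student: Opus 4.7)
The plan is to adapt the proof of Proposition \ref{prop:rel of Xdual q over G} to the copointed setting. Since $\ku^G$ is commutative rather than cocommutative, the natural substitute for the co-opposite Hopf algebra used there is the opposite Hopf algebra. The strategy is to exhibit an explicit Hopf algebra isomorphism
$$
\vartheta: (\B(q,X)\#\ku^G)^{\op} \longrightarrow \B(q^{-*},X^*)\#\ku^G,\quad x_i \mapsto y_i,\ \delta_t \mapsto \delta_t,
$$
and then to read off from $\vartheta$ that $\cJ(q,X)$ is mapped onto $\cJ(q^{-*},X^*)$ via the prescription $m\mapsto m^{\fop}$.

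To produce $\vartheta$, set $A:=(\B(q,X)\#\ku^G)^{\op}$. Since $\ku^G$ is commutative, $A$ decomposes as a smash product $R\#\ku^G$ for a graded braided Hopf algebra $R\in\ydgdual$. Taking op preserves the coalgebra structure, so the coradical filtration of $A$ coincides with that of $\B(q,X)\#\ku^G$, which is generated by its first filtration term; consequently $R$ is the Nichols algebra of $\cP(R)$. A direct computation with \eqref{eqn:yetter-drinfeld-dual} and \eqref{eq:TV smash ku a la G} identifies $\cP(R)$ with $W(q^{-*},X^*)$, the object associated to the dual realization $(\cdot, g^{-1}, \{\chi_i\}_{i\in X})$. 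This yields the existence of $\vartheta$; verifying that it is an isomorphism reduces to checking that the relation $\delta_t x_i = x_i \delta_{g_i t}$ in $\B(q,X)\#\ku^G$ becomes, upon reversing multiplication, exactly the relation $\delta_{g_i t} y_i = y_i \delta_t$ holding in $\B(q^{-*},X^*)\#\ku^G$ (where the role of $g_i$ is played by $g_i^{-1}$), and that the coproducts match because op does not alter the coalgebra structure.

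For the conclusion, note that any monomial $m = x_{i_1}\cdots x_{i_n}\in T(q,X)$, viewed as an element of $A$, can be written as $x_{i_n}\cdot_{\op}\cdots\cdot_{\op} x_{i_1}$, so that $\vartheta(m) = y_{i_n}\cdots y_{i_1} = m^{\fop}$. Hence $\vartheta$ restricts to a linear bijection $\cJ(q,X)\to \cJ(q^{-*},X^*)$ that coincides with $(\,)^{\fop}$, giving both $\cJ(q^{-*},X^*)=\cJ(q,X)^{\fop}$ and the fact that $S^{\fop}$ generates $\cJ(q^{-*},X^*)$ whenever $S$ generates $\cJ(q,X)$. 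The main obstacle is the middle step: carefully identifying the YD-module structure on $\cP(R)$ after taking op, and matching the smash-product commutation and comultiplication relations on the two sides under the substitution $g_i\leadsto g_i^{-1}$.
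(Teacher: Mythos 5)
Your proof is correct and takes essentially the same approach as the paper's: both rest on the Hopf algebra isomorphism $\vartheta:(\B(q,X)\#\ku^G)^{\op}\to\B(q^{-*},X^*)\#\ku^G$ sending $x_i\mapsto y_i$, $\delta_t\mapsto\delta_t$, and then read off $\cJ(q^{-*},X^*)=(\cJ(q,X))^{\fop}$. The paper's proof is terser (it simply asserts $\vartheta$ is an isomorphism), while you justify it by adapting the argument of Proposition \ref{prop:rel of Xdual q over G} — decomposing $(\B(q,X)\#\ku^G)^{\op}$ as $R\#\ku^G$ and identifying $\cP(R)\cong W(q^{-*},X^*)$ — which is a sound way to fill in the step the paper glosses over.
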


\begin{proof}
We consider the opposite Hopf algebra $(\B(q,X)\#\ku^G)^{\op}$. As $\ku^G$ is commutative, 
we can see that
$$
\vartheta:(\B(q,X)\#\ku^G)^{\op}\longrightarrow\B(q^{-*},X^*)\#\ku^G,\quad 
x_i\#\delta_g\mapsto y_i\#\delta_g\quad i\in X,\,g\in G.
$$
is a Hopf algebra isomorphism.
If $m\in\cJ(q,X)$, then $0=\vartheta(m)=m^{\fop}$ and hence 
$(\cJ(q,X))^{\fop}\subseteq\cJ(q^{-*},X^*)$. The other inclusion is proved in a similar 
way and the definition of $(\,)^{\fop}$ implies the last statement.
\end{proof}

\begin{pro}\label{prop:bij co and cop}
The following maps are bijective correspondences. 
\begin{align*}
\big\{\mbox{Liftings of }\,\B(X,q)\,\mbox{ over }\,\ku 
G\big\}&\longmapsto\big\{\mbox{Liftings of }\,\B(X^*,q^{-*})\,\mbox{ over }\,\ku G\big\}\\
A&\longmapsto A^{\cop},\\
\big\{\mbox{Liftings of }\,\B(q,X)\,\mbox{ over 
}\,\ku^G\big\}&\longmapsto\big\{\mbox{Liftings of }\,\B(q^{-*},X^*)\,\mbox{ over 
}\,\ku^G\big\}\\
A&\longmapsto A^{\op}.
\end{align*}
\end{pro}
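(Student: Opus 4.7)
The plan is to deduce the proposition almost directly from Propositions \ref{prop:rel of Xdual q over G} and \ref{prop:rel of Xdual q over kalaG}, whose isomorphisms $\vartheta$ already realize the bosonizations of the dual rack Nichols algebras as $\cop$ or $\op$ of the original bosonizations.

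For the pointed case, let $A$ be a lifting of $\B(X,q)$ over $\ku G$, so $\gr A \simeq \B(X,q)\#\ku G$ as graded Hopf algebras. Taking $\cop$ reverses comultiplication but preserves the coradical filtration, so $\gr(A^{\cop})\simeq(\gr A)^{\cop}\simeq(\B(X,q)\#\ku G)^{\cop}$. By the isomorphism $\vartheta$ in the proof of Proposition \ref{prop:rel of Xdual q over G}, this is in turn isomorphic (as a graded Hopf algebra, since $\vartheta$ respects the natural gradings induced from the tensor algebra) to $\B(X^*,q^{-*})\#\ku G$. Hence $A^{\cop}$ is a lifting of $\B(X^*,q^{-*})$ over $\ku G$, and the assignment $A\mapsto A^{\cop}$ is well defined.

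To see it is a bijection, first note that the dual-rack construction is involutive: $X^{**}=X$ because $\phi_i^{-1}$ in $X^*$ is $\phi_i$ in $X^{**}$, and a short computation gives $(q^{-*})^{-*}=q$. Therefore the same construction applied to the dual side produces a map $B\mapsto B^{\cop}$ from liftings of $\B(X^*,q^{-*})$ over $\ku G$ to liftings of $\B(X,q)$ over $\ku G$, and since $(A^{\cop})^{\cop}=A$ at the level of Hopf algebras, the two maps are mutually inverse.

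The copointed case follows by the same argument, replacing $\cop$ by $\op$ and invoking the isomorphism $\vartheta$ constructed in the proof of Proposition \ref{prop:rel of Xdual q over kalaG}, together with the fact that taking $\op$ commutes with the coradical filtration and with $\gr$. The only slight subtlety, and the point that deserves explicit mention, is checking that the isomorphisms $\vartheta$ are in fact graded, so that passing to $\gr$ actually yields the bosonization of the dual Nichols algebra in degree zero coradical; this is immediate from the definition $x_i\#g\mapsto y_i\# g_i g$ (resp.\ $x_i\#\delta_g\mapsto y_i\#\delta_g$) since the $y_i$ sit in degree one. No further computation is needed.
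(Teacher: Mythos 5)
Your proof is correct and follows essentially the same route as the paper: both rest on the isomorphism $\vartheta$ from Propositions \ref{prop:rel of Xdual q over G} and \ref{prop:rel of Xdual q over kalaG}, on $\gr(A^{\cop})=(\gr A)^{\cop}$ (resp.\ $\gr(A^{\op})=(\gr A)^{\op}$), and on the involution $(A^{\cop})^{\cop}=A$. You spell out two details the paper leaves implicit, namely that $\vartheta$ is graded and that $(X^{**},(q^{-*})^{-*})=(X,q)$, but the underlying argument is the same.
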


\begin{proof}
We only prove the pointed case. The copointed case is similar.

Let $A$ be a lifting of $\B(X,q)$ over $\ku G$. It is enough to prove that $A^{\cop}$ is 
a lifting of $\B(X^*,q^{-*})$ over $\ku G$ since $(A^{\cop})^{\cop}=A$. Clearly $A^{\cop}$ 
is generated as an algebra by $A_{[1]}$ and $\gr(A^{\cop})=(\gr A)^{\cop}$. Then 
$A^{\cop}$ is a lifting of a Nichols algebra $\B(V)$ for some $V\in\ydg$. As in Proposition 
\ref{prop:rel of Xdual q over G}, we can see that $V\simeq V(X^*,q^{-*})\in\ydg$.
\end{proof}

\section{Nichols algebras attached to affine racks}\label{subsec:affine-racks}

Let $A$ be an abelian group and $T\in\Aut A$. The {\it affine rack} $\Aff(A,T)$ is the set $A$ 
with operation
$$
a\rhd b=T(b) +(\id-T)(a)\quad\mbox{for all}\quad a,b\in A,
$$ 
see \cite{AG2}. The dual rack $\Aff(A,T)^*$ is the affine rack $\Aff(A,T^{-1})$.

We define a family of principal YD-realizations for $\Aff(A,T)$ and
a constant 2-cocycle. Let $C_n$ be the cyclic group of order $n\in\N$ generated by $t$. If
$\ord T$
divides $n$, then $A\rtimes_T C_{n}$ is the semidirect product of $A$ and $C_n$ with
respect to $T$ where $t\cdot a=T(a)$ for $a\in A$. Let $\xi$ be a primitive root of 1 and
$\ell=[\ord T,\ord(\xi)]$ be the minimum common
multiple of $\ord T$ and $\ord\xi$.

\begin{pro}\label{pro:affine-realization}
Let $k,m\in\N$ with $0\leq k< m$. Consider the affine rack $X=\Aff(A,T)$ with constant $2$-cocycle
$\xi$. Let
\begin{itemize}
\item $g:A\longmapsto A\rtimes_T C_{m\ell}$ be the map $a\mapsto g_{a}=a\times 
t^{k\ell+1}$;
\smallbreak
\item $\cdot:(A\rtimes_T C_{m\ell})\times A\to A$ be the assignment $h\cdot a=b$, if 
$hg_ah^{-1}=g_b$;
\smallbreak
\item $\chi_{a}:A\rtimes_T C_{m\ell}\to\k^*$ be the map $\chi_{a}(b\times
t^s)=\xi^s$, for $a,b\in A$, $s\in\N$.
\end{itemize}
Then $(g,\cdot,\{\chi_a\}_{a\in A})$ is a faithful Yetter-Drinfeld realization of $(X, 
\xi)$
over $A\rtimes_T C_{m\ell}$. 
\end{pro}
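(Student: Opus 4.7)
The proof amounts to checking, in order, each clause in the definition of a principal YD-realization, together with faithfulness. Writing elements of $G = A\rtimes_T C_{m\ell}$ as pairs $c\times t^r$ with product $(c_1\times t^{r_1})(c_2\times t^{r_2})=(c_1+T^{r_1}(c_2))\times t^{r_1+r_2}$, everything rests on the two identities $T^\ell=\id$ (since $\ord T\mid\ell$) and $\xi^\ell=1$ (since $\ord\xi\mid\ell$), which are the entire reason for choosing $\ell=[\ord T,\ord\xi]$.

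First I would show that $\cdot$ is a well-defined action. A direct computation in the semidirect product gives
$$
(c\times t^r)\,g_a\,(c\times t^r)^{-1} = \bigl(T^r(a)+(\id-T^{k\ell+1})(c)\bigr)\times t^{k\ell+1} = \bigl(T^r(a)+(\id-T)(c)\bigr)\times t^{k\ell+1},
$$
using $T^{k\ell+1}=T$. Thus the conjugate of $g_a$ by any $h\in G$ is of the form $g_b$ for a unique $b\in A$, so the prescription $h\cdot a := b$ defines a map $G\times A\to A$; this is an action because conjugation is, and by construction $g_{h\cdot a}=h g_a h^{-1}$. Specializing the formula to $h=g_a$ (that is, $c=a$, $r=k\ell+1$) yields
$$
g_a\cdot b = T^{k\ell+1}(b)+(\id-T)(a) = T(b)+(\id-T)(a) = a\rhd b,
$$
which is the required rack compatibility.

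Next I would handle the 1-cocycle. The formula $\chi_a(b\times t^s)=\xi^s$ does not depend on $a$, and defines a genuine character $\chi\colon G\to\ku^*$, well-defined since $\xi^{m\ell}=1$. Multiplicativity of $\chi$ reduces the cocycle identity $\chi_a(ht)=\chi_{t\cdot a}(h)\chi_a(t)$ to a tautology, and the normalization $\chi_a(g_b)=\xi^{k\ell+1}=\xi$ follows from $\xi^\ell=1$. Faithfulness is trivial: since the second coordinate of $g_a$ is fixed, $g_a=g_{a'}$ forces $a=a'$.

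No step presents any real obstacle; the only substantive computation is the conjugation identity, and the two exponent reductions $T^{k\ell+1}=T$ and $\xi^{k\ell+1}=\xi$ are precisely what the definition of $\ell$ is designed to produce. The subtle point is not computational but organizational, namely to notice that the shift by $k\ell$ in the definition of $g_a$ is invisible at the level of both conjugation on $A$ and evaluation of $\chi$, so that the family of realizations parametrized by $(k,m)$ is genuine while still meeting all the compatibility requirements.
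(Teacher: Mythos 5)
Your proof is correct and follows essentially the same route as the paper's: verify $g$ is injective, compute the conjugate $h g_a h^{-1}$ in the semidirect product to see that the image of $g$ is a conjugacy class and that $g_a\cdot b = a\rhd b$, and observe that each $\chi_a$ is the same multiplicative character so the $1$-cocycle condition is immediate. Your write-up is more explicit than the paper's (in particular about the roles of $T^{k\ell+1}=T$ and $\xi^{k\ell+1}=\xi$, which the paper leaves implicit), but the substance is identical.
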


A realization $(g,\cdot,\{\chi_a\}_{a\in A})$ as in Proposition 
\ref{pro:affine-realization} is called an {\it $(m,k)$-affine realization} of $(\Aff(A,T),\xi)$.

\begin{proof}
Clearly, $g$ is injective. If
$h=a\times t^s\in A\rtimes_T C_{m\ell}$ and $b\in A$, then
$hg_{b}h^{-1}=((\id-T)(a)+T^s(b))\times t^{k\ell+ 1}$.
Thus the action $\cdot$ is well defined since the image of $g$ is a conjugacy class and
$g_{a}\cdot b=a\rhd b$.
Also $\chi_{a}(g_{b})=\xi$ and $\chi_{a}=\chi_{b}$ is a group morphism for all $a,b\in A$. Then $\{\chi_{a}\}$ is a 1-cocycle.
\end{proof}

We denote by $\F_b$ the finite field of $b$ elements. The multiplication by $N\in\F_b^*$
is an automorphism which we also denote by $N$. Then $\Aff(\F_b, N)$ is faithful and 
indecomposable and satisfies
\begin{align}\label{eq:inn equal aut}
\Inn_\rhd\Aff(\F_b,N)=\F_b\rtimes_{N}C_{\ord N}=\Aut_{\rhd}\Aff(\F_b,N), 
\end{align}
the first equality is easy; the second one is by \cite[Corollary 1.25]{AG2}.

\smallbreak

Let $q$ be a 2-cocycle on $\Aff(\F_b,N)$ and let $(\cdot,g,\{\chi_i\}_{i\in X})$ be a 
principal YD-realization of 
$(\Aff(\F_b,N),q)$ over a finite group $G$. If $q$ is 
constant, pick $i\in X$ and set $\chi_G=\chi_i$, cf. Lemma \ref{le:the enveloping se 
proyecta} \eqref{ite:chiG:the enveloping se proyecta}. From now on, we denote 
$V(b,N,q)\in\ydg$ the corresponding Yetter-Drinfeld 
module as in \eqref{eqn:yetter-drinfeld}. Also, $T(b,N,q)$ and $\B(b,N,q)$ denote 
respectively its tensor algebra and the Nichols algebra with ideal of relations 
$\cJ(b,N,q)$.

\subsection{The Nichols algebras $\B(b,N,q)\in\ydg$}\label{subsec:nichols} 

We list all the known finite-dimensional Nichols algebras attached to an 
affine rack $\Aff(\F_b,N)$ and a $2$-cocycle $q$, see {\it e. ~g.} \cite{grania}.

\subsubsection{The Nichols algebra $\B(3,2,-1)$}\label{subsubsec:nicholsF3} Its ideal
$\cJ(3,2,-1)$ is generated by
\begin{align}\label{eq:F3 2}
x_i^2, \quad x_ix_j+x_{2j-i}x_i+x_jx_{2j-i},\quad i,j\in\F_3.
\end{align}
This Nichols algebra has dimension $12$ and was computed in \cite{MS,FK}.

\subsubsection{The Nichols algebra $\B(4,\omega, -1)$}\label{subsubsec:nicholsF4} Let
$\omega\in\F_4$ be such that $\omega^2+\omega+1=0$. The ideal $\cJ(4,\omega, -1)$ is
generated by $z_{(4, \omega,-1)}:=(x_\omega
x_1x_0)^2+(x_1x_0x_\omega)^2+(x_0x_\omega x_1)^2$ and
\begin{align}\label{eq:F4 omega}
x_i^2,& \quad x_ix_j+x_{(\omega+1)i+\omega j}x_i+x_jx_{(\omega+1)i+\omega j}\quad\forall\, 
i,j\in\F_4.
\end{align}
This Nichols algebra was computed in \cite{grania1}; $\dim\B(4,\omega, -1)=72$. 

\subsubsection{The Nichols algebra $\B(5,2, -1)$}\label{subsubsec:nicholsF5}
The ideal $\cJ(5,2, -1)$ is generated by
$z_{(5, 2,-1)}:=(x_1x_0)^2+(x_0x_1)^2$ and
\begin{align}\label{eq:F5 2}
&x_i^2, \quad  x_ix_j+x_{-i+2j}x_i+x_{3i-2j}x_{-i+2j}+x_jx_{3i-2j} \quad\forall 
i,j\in\F_5.
\end{align}
This Nichols algebra was computed in \cite{AG2}; $\dim\B(5,2, -1)=1280$.

\subsubsection{The Nichols algebra $\B(5,3, -1)$}\label{subsubsec:nicholsF5 dual}
Since $\Aff(\F_5,3)$ is the dual rack of $\Aff(\F_5,2)$ and the $2$-cocycle is $-1$ we 
can apply Proposition \ref{prop:rel of Xdual q over G}. Then the ideal $\cJ(5,3, -1)$ 
is generated by
$z_{(5, 3,-1)}:=(x_1x_0)^2+(x_0x_1)^2$ and
\begin{align}\label{eq:F5 3}
&x_i^2, \quad  x_jx_i+x_ix_{-i+2j}+x_{-i+2j}x_{3i-2j}+x_{3i-2j}x_j\quad\forall i,j\in\F_5.
\end{align}

\subsubsection{The Nichols algebra $\B(7,3, -1)$}\label{subsubsec:nicholsF7}
The ideal $\cJ(7,3, -1)$  is generated by
$z_{(7,3,-1)}:=(x_2x_1x_0)^2+(x_1x_0x_2)^2+(x_0x_2x_1)^2$ and
\begin{align}\label{eq:F7 3}
&x_i^2, \quad x_ix_j+x_{-2i+3j}x_i+x_jx_{-2i+3j} \quad\forall i,j\in\F_7.
\end{align}
This Nichols algebra was computed in \cite{grania}; $\dim\B(7,3, -1)=326592$. 

\subsubsection{The Nichols algebra $\B(7,5, -1)$}\label{subsubsec:nicholsF7 dual}
As in \ref{subsubsec:nicholsF5 dual} we apply Proposition \ref{prop:rel of Xdual q over 
G} 
since $\Aff(\F_7^5)$ is the dual rack of $\Aff(\F_7,3)$. Then the ideal $\cJ(7,5, -1)$  is 
generated by
$z_{(7,5,-1)}:=x_2x_4x_0x_5x_3x_0+x_1x_3x_4x_5x_3x_2+x_0x_3x_6x_0x_4x_1$ and
\begin{align}\label{eq:F7 5}
&x_i^2, \quad x_jx_i+x_ix_{-2i+3j}+x_{-2i+3j}x_j\quad\forall i,j\in\F_7.
\end{align}

\subsubsection{The Nichols algebra $\B(4,\omega, \zeta)$}\label{subsubsec:nicholsA4}
Let $\xi\in \ku$ be a root of unity of order $3$. Then $\Aff(\F_4,\omega)$ admits the 
2-cocycle 
\begin{align}\label{eqn:cociclo-vendra}
 \zeta=(\zeta_{ij})_{i,j\in \F_4}=\left[\begin{smallmatrix}
                                \xi &\xi &\xi &\xi \\
                                \xi &\xi &-\xi &-\xi \\
                                \xi &-\xi &\xi &-\xi \\
                                \xi &-\xi &-\xi &\xi 
                               \end{smallmatrix}\right].
\end{align}
The Nichols algebra $\B(4,\omega, \zeta)$, see \cite[Proposition 7.9]{HLV}, has 
dimension
5184 and its ideal  of relations is generated by $x_0^3$, $x_1^3$, $x_\omega^3$, 
$x_{\omega^2}^3$, 
\begin{align*}
&\xi ^2\, x_0x_1 + \xi\, x_1x_\omega - x_\omega x_0,  && \xi ^2\, x_0x_\omega + \xi\, 
x_\omega x_{\omega^2} -
x_{\omega^2}x_0,\\
& \xi ^2\, x_1x_0 - \xi\, x_0x_{\omega^2} - x_{\omega^2}x_1,  && \xi ^2\, x_\omega x_1 + 
\xi\, x_1x_{\omega^2} +
x_{\omega^2}x_\omega,
\end{align*}
plus an extra degree six relation
\begin{multline*}
z_{(4,\omega, \zeta)}:=x_0^2 x_1x_\omega x_1^2 + x_0x_1x_\omega x_1^2 x_0 + x_1x_\omega 
x_1^2 x_0^2 +
x_\omega x_1^2 x_0^2 x_1 + x_1^2 x_0^2 x_1x_\omega \\+ x_1x_0^2 x_1x_\omega x_1
+x_1x_\omega x_1x_0^2 x_\omega + x_\omega x_1x_0x_1x_0x_\omega + x_\omega x_1^2 
x_0x_\omega x_0.
\end{multline*}

\subsection{About the top degree relation $z_{(b,N,q)}$}
In the rest of the section, the pair $(X,q)$ denotes one of the followings
\begin{align*}
&(\Aff(\F_3,2),-1),\ (\Aff(\F_4,\omega),-1),\ (\Aff(\F_5,2),-1),\ (\Aff(\F_5,3),-1),\\
&(\Aff(\F_7,3),-1),\ (\Aff(\F_7,5),-1)\ \mbox{or} \ (\Aff(\F_4,\omega),\zeta).
\end{align*}
We fix $n=2$ for the first six pairs and $n=3$ for the last one. We set
$\pi_n:T(X,q)\twoheadrightarrow\widehat{\B_n}(X,q)$ the natural projection.

Let $z=z_{(b,N,q)}$ be the top degree defining relation of $\B(X,q)$. Since $\cJ(X,q)$ is 
generated by $z$ and elements of degree $<\deg z$, $\ku\pi_n(z)\in\ydg$ via a central 
$t_z\in G$ and a multiplicative character $\chi_z:G\rightarrow\ku^*$, that is
\begin{align}\label{eqn:chiz}
\pi_n(z)\_{-1}\ot \pi_n(z)\_{0}=t_z\ot \pi_n(z)\quad\mbox{and}\quad 
g\cdot\pi_n(z)=\chi_z(g)\,\pi_n(z)
\end{align}
for all $g\in G$. Moreover, $\pi_n(z)$ is primitive in $\widehat{\B_n}(X,q)$ and therefore
\begin{align}\label{eq:z is skewprimi} 
\Delta(\pi_n(z))=\pi_n(z)\ot1+t_z\ot\pi_n(z)\quad\mbox{in}\quad\widehat{\B_n}(X,q)\#\ku 
G. 
\end{align}

\begin{lem}\label{le:f max degree rel}
For all $i\in X$, $\chi_{z}(g_i)=1$. If $q$ is constant, then $\chi_z=\chi_G^{\deg z}$.
\end{lem}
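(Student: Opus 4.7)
Set $d := \deg z$. By \eqref{eqn:chiz}, the line $\ku \pi_n(z)$ is a one-dimensional YD-submodule of $\widehat{\B_n}(X, q)$ affording the character $\chi_z$, so $\chi_z(g)$ is obtained by computing $g \cdot \pi_n(z)$. From \eqref{eqn:yetter-drinfeld} and the tensor-product $G$-action in $\ydg$,
$$
g \cdot (x_{j_1} \cdots x_{j_d}) = \chi_{j_1}(g) \cdots \chi_{j_d}(g) \cdot x_{g \cdot j_1} \cdots x_{g \cdot j_d} \quad \text{in } T(X,q).
$$

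In the constant case $q \equiv \xi$, Lemma \ref{le:the enveloping se proyecta} \eqref{ite:chiG:the enveloping se proyecta} gives $\chi_{j_k}(g) = \chi_G(g)$, so the cocycle prefactor simplifies to $\chi_G(g)^d$ and
$$
g \cdot z = \chi_G(g)^d \cdot z^{\sigma_g},
$$
where $\sigma_g \in \Aut_\rhd X$ is the rack automorphism induced by $g$ and $z^\sigma$ denotes $z$ with each index transformed by $\sigma$. Projecting to $\widehat{\B_n}(X, q)$ yields $\chi_z(g) \pi_n(z) = \chi_G(g)^d \pi_n(z^{\sigma_g})$, so $\pi_n(z^{\sigma_g}) = \mu(g) \pi_n(z)$ for a scalar $\mu(g)$ with $\chi_z = \chi_G^d \cdot \mu$. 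Stability of $\mJ_n$ under rack automorphisms makes $\mu$ multiplicative in $\sigma_g$, and by \eqref{eq:inn equal aut} every $\sigma_g$ is a product of inner automorphisms $\phi_i$. Hence $\chi_z = \chi_G^d$ follows once $z^{\phi_i} \equiv z \pmod{\mJ_n}$ is verified for each $i \in X$, which I do by direct substitution using the explicit form of $z_{(b,N,q)}$ from Section \ref{subsec:nichols} together with the quadratic relations \eqref{eq:F4 omega}--\eqref{eq:F7 5}. Evaluating $\chi_z = \chi_G^d$ at $g = g_i$ with $\chi_G(g_i) = q_{ii} = -1$ and $d \in \{4, 6\}$ even gives $\chi_z(g_i) = 1$.

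For the non-constant pair $(\Aff(\F_4, \omega), \zeta)$, the prefactor $\zeta_{i, j_1} \cdots \zeta_{i, j_d}$ depends on the particular monomial and the preceding collapse fails. There, $\chi_z(g_i) = 1$ is established by computing $g_i \cdot z_{(4, \omega, \zeta)}$ term by term using the matrix \eqref{eqn:cociclo-vendra} and reducing modulo the quadratic relations of $\cJ(4, \omega, \zeta)$; the resulting expression is again $z_{(4, \omega, \zeta)}$. The main technical obstacle throughout is the verification of these reductions modulo $\mJ_n$: although in principle mechanical, the number of monomials and defining relations involved, particularly in the $\F_7$-cases and in the $\zeta$-case, makes the hand computation impractical, and in practice the checks are done with the aid of \cite{GAP}.
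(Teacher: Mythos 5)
Your proposal follows the same core strategy as the paper. Both decompose the action of $g\in G$ on $\pi_n(z)$ as a $1$-cocycle prefactor $\chi_G(g)^{\deg z}$ times a character $\mu$ (the paper's $\lambda$) that factors through the image of $G$ in $\Aut_\rhd\Aff(\F_b,N)$, use \eqref{eq:inn equal aut} to identify this image with $\Inn_\rhd\Aff(\F_b,N)=\F_b\rtimes_N C_{\ord N}$, and reduce to showing that $\mu=\varepsilon$; the evaluation $\chi_z(g_i)=(-1)^{\deg z}\mu(\phi_i)=1$ then matches the paper's.

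Where you diverge is in how $\mu=\varepsilon$ is actually established, and here the paper is substantially more economical. You propose to verify $z^{\phi_i}\equiv z\pmod{\mJ_n}$ for \emph{every} $i\in X$, with \cite{GAP} doing the heavy lifting. The paper instead exploits the group structure: since $N\neq 1$, the translation subgroup $\F_b$ lies in the commutator subgroup of $\F_b\rtimes_N C_{\ord N}$, so any multiplicative character is determined by its value on the single rotation $(0\times t)=\phi_0$, and one substitution suffices. Beyond that, for $\Aff(\F_4,\omega)$ the paper avoids any substitution by decomposing the orbit $M=\ku G\cdot z$ as $C\oplus\ku Y$ into simple $\F_4\rtimes_{\omega}C_3$-modules and observing that only $\ku Y$ is one-dimensional, hence trivial; and for $\Aff(\F_5,3)$ and $\Aff(\F_7,5)$ it avoids fresh computation entirely by transporting the result from $\Aff(\F_5,2)$ and $\Aff(\F_7,3)$ through the duality isomorphism $\vartheta$ of Proposition \ref{prop:rel of Xdual q over G}. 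Your route is logically sound, but it outsources all six constant-cocycle cases to the computer and misses the self-duality at $-1$ that handles two of them for free.

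One technical point worth making explicit: your step ``stability of $\mJ_n$ under rack automorphisms makes $\mu$ multiplicative'' rests on the index-permutation action of $\sigma\in\Aut_\rhd X$ on $V^{\ot n}$ commuting with the braiding $c$, so that $\ker\bS_k$ is preserved. For constant $q$ this holds precisely because $q_{\sigma(i),\sigma(j)}=q_{ij}$ trivially. For $q=\zeta$ this equivariance genuinely fails (the nine monomials of $z_{(4,\omega,\zeta)}$ do not even have the same index multiset), so the split $\chi_z=\chi_G^{\deg z}\mu$ is not available; your decision to fall back on a direct computation of $g_i\cdot\pi_n(z)$ in that case is the correct one and coincides with what the paper does via \cite{GAP}.
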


\begin{proof}
By Lemma \ref{le:the enveloping se proyecta} \eqref{ite:Autle:the enveloping se 
proyecta}, 
$G$ acts by rack automorphisms on $\Aff(\F_b,N)$. Let $\overline{t}$ be the automorphism defined
by $t\in G$. Let $K\subset G$ be the subgroup generated by $\{g_i:i\in X\}$ and 
$\mathcal{Z}(K)$ be its center. By \cite[Lemma 1.9 (2)]{AG2} and \eqref{eq:inn equal 
aut}, 
$K/\mathcal{Z}(K)=\Inn_\rhd\Aff(\F_b,N)=\F_b\rtimes_{N}C_{\ord N}=\Aut_\rhd\Aff(\F_b,N)$. Thus 
there is a multiplicative character
$\lambda:\Aut_\rhd\Aff(\F_b,N)\rightarrow\ku^*$ such that
$$
\chi_z(t)\pi_n(z)=t\cdot\pi_n(z)=\widetilde\chi_z(t)\lambda(\overline{t})\,\pi_n(z),
$$
where $\widetilde\chi_z$ is given by the 1-cocycle $\{\chi_i\}_{i\in X}$. If $q$ is 
constant, then $\widetilde\chi_z=\chi_G^{\deg z}$. If $q$ is not constant, then it is 
easy to 
check that $\widetilde\chi_{z|K}=\varepsilon$. Therefore, to finish we have to prove that 
$\lambda=\varepsilon$. Let $M=\ku G\cdot z\subset T(X,q)$.

{\it \underline {Case}} $\Aff(\F_4,\omega)$. Let
$\cO=\{(\omega\,1\,0),(1\,\omega\,\omega^2),(0\,1\,\omega^2),(0\,\omega^2\,\omega)\}
\subset
\F_4^3$ and
$$X_{(abc)}:=(x_ax_bx_c)^2+(x_bx_cx_a)^2+(x_cx_ax_b)^2,\quad(a\, b\, c)\in\cO.$$
Then $z=X_{(\omega10)}$ and $M=\ku\{X_\sigma\}_{\sigma\in\cO}$. Let 
$Y=\sum_{\sigma\in\cO}X_\sigma$ and $C=\ku\{
X_\sigma-X_\tau:\sigma,\tau\in\cO\}$. Then $M=C\oplus\ku Y$ is a sum of simple
$\F_4\rtimes_{\omega}C_{3}$-submodules. Thus $\pi_2(C)=0$ and $\pi_2(M)\simeq\k Y$ as
$\F_4\rtimes_{\omega}C_{3}$-modules.

{\it \underline {Case}} $\Aff(\F_5,2)$. Here $z=(x_1x_0)^2+(x_0x_1)^2$. By \eqref{eq:F5 2}, it
holds that
\begin{align*}
\pi_2((x_0x_2)^2)&=\pi_2(-x_0(x_{3}x_2+x_{1}x_{3}+x_0x_{1})x_2)=\pi_2(-x_0x_{1}x_{3}
x_2)\quad\mbox{ and}\\
\pi_2((x_2x_0)^2)&=
\pi_2((x_{1}x_0)^2+(x_0x_{1
})^2+x_0x_{1}x_{3}x_2).
\end{align*}
Hence $\pi_2((x_{2}x_{0})^2+(x_{0}x_{2})^2)=\pi_2(z)$ and thus $(0\times
t)\cdot\pi_2(z)=\pi_2(z)$.
Since $(0\times t)(1\times1)=(2\times1)(0\times t)$ in
$\F_5\rtimes_{2}C_{4}=\langle(0\times t),(1\times1)\rangle$,
$\pi_2(M)$ is the trivial $\F_5\rtimes_{2}C_{4}$-module.

{\it \underline {Case}} $\Aff(\F_5,3)$. As in \ref{subsec:dual rack}, we denote by 
$\{y_i\}_{i\in\F_5}$ the basis of $V(5,3, -1)$ and recall that 
$\fc(z_{(5,2,-1)})=z_{(5,3,-1)}$. 
Let $\vartheta:(\widehat{\B_2}(5,2,-1)\#\ku 
G)^{\cop}\rightarrow\widehat{\B_2}(5,3,-1)\#\ku G$ be the Hopf algebra map given by 
$\vartheta(x_i\#g)=y_i\#g_ig$ for all $i\in X,$ $g\in G$. 
Then 
$\vartheta(\overline{g_i}\cdot\pi_2(z_{(5,2,-1)}))=\overline{g_i}\cdot\pi_2(z_{(5,3,-1)}
)\#\,t_{z_{(5,2,-1)}}$ since the action is induced by the adjoint action. Hence 
$\lambda=\varepsilon$ because before we proved that
$$
\vartheta(\overline{g_i}\cdot\pi_2(z_{(5,2,-1)}))=\vartheta(\pi_2(z_{(5,2,-1)}))=\pi_2(z_{
(5,3,-1)})\#\,t_{z_{(5,2,-1)}}. 
$$

{\it \underline {Cases}} $\Aff(\F_7,3)$ and $(\Aff(\F_4,\omega),\zeta)$. In both cases, $
(0\times t)\cdot\pi_2(z)=\pi_2(z)$,  using \cite{GAP, GBNP}. Then we proceed as for 
 $\Aff(\F_5,2)$.

{\it \underline {Case}} $\Aff(\F_7,5)$ is similar to $\Aff(\F_5,3)$ since $\Aff(\F_7,5)^*\simeq\Aff(\F_7,3)$.
\end{proof}

In the following, $\widehat{\B_n}(X,q)\#\ku G$ is a right $\B(X,q)\#\ku G$-comodule via 
the 
natural projection.

\begin{lem}\label{le:z is central}
It holds that $\pi_n(z)$ is central in $\widehat{\B_n}(X,q)$ and the subalgebra of right 
$\B(X,q)\#\ku G$-coinvariants is the polynomial algebra
$\ku[\pi_n(z)t_z^{-1}]$.
\end{lem}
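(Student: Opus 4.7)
The plan is to first prove centrality, and then use it to identify the coinvariants. For centrality, by Lemma \ref{le:f max degree rel} we have $\chi_z(g_i) = 1$ for all $i \in X$, so in the bosonization $\widehat{\B_n}(X,q) \# \ku G$ the element $\pi_n(z)$ commutes with every $g_i$, hence with the entire subgroup they generate. It therefore suffices to show that $\pi_n(z)$ commutes with each generator $x_i \in V(X,q)$.

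A direct computation using the skew-primitive coproduct $\Delta(\pi_n(z)) = \pi_n(z) \ot 1 + t_z \ot \pi_n(z)$ of \eqref{eq:z is skewprimi} yields
\begin{equation*}
\ad(\pi_n(z))(x_j) \;=\; \pi_n(z)\, x_j \;-\; \chi_j(t_z)\, x_{t_z \cdot j}\, \pi_n(z).
\end{equation*}
I first verify two auxiliary facts: (i) $t_z$ acts trivially on the rack $X$, that is $t_z \cdot j = j$ for every $j \in X$, and (ii) $\chi_j(t_z) = 1$ for every $j$. Fact (ii) is immediate when $q \equiv -1$, because $\chi_j(t_z) = \chi_G(t_z) = (-1)^{\deg z}$ and $\deg z$ is even in all six pairs; for $(\Aff(\F_4,\omega),\zeta)$ an analogous cocycle calculation applies. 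Fact (i) is verified case by case by computing the composition of the rack automorphisms associated to the $g_i$'s appearing in $t_z$, exactly as in the proof of Lemma \ref{le:f max degree rel}. With (i) and (ii) in hand, centrality reduces to the vanishing of $[x_j, \pi_n(z)]$ in $\widehat{\B_n}(X,q)$. A braided-coproduct computation shows that this commutator is primitive in $\widehat{\B_n}(X,q)$ because the extra terms cancel thanks to the symmetric braiding on $\ku\pi_n(z) \ot V$. Since it lies in the kernel of $\widehat{\B_n}(X,q) \twoheadrightarrow \B(X,q)$, and since by construction no defining relation of $\B(X,q)$ occurs strictly between degrees $n$ and $\deg z$, the pre-Nichols algebra $\widehat{\B_n}(X,q)$ admits no primitive of degree $\deg z + 1$ beyond those forced by $\pi_n(z)$ itself, and therefore $[x_j, \pi_n(z)] = 0$.

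For the coinvariants, once $\pi_n(z)$ is central and commutes with $t_z$ (the latter thanks to $\chi_z(t_z) = 1$), the element $w := \pi_n(z)\, t_z^{-1}$ satisfies $w^k = \pi_n(z)^k t_z^{-k}$, and $\ku[w]$ is a polynomial subalgebra of $\widehat{\B_n}(X,q) \# \ku G$. The non-nilpotency of $\pi_n(z)$ follows from the vector-space decomposition $\widehat{\B_n}(X,q) \cong \B(X,q) \ot \ku[\pi_n(z)]$, itself a consequence of centrality together with the description of the kernel of $\widehat{\B_n}(X,q) \to \B(X,q)$ as the ideal generated by $\pi_n(z)$. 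A direct calculation with the formula
\begin{equation*}
\Delta(\pi_n(z)^k) \;=\; \sum_{i=0}^{k} \binom{k}{i}\, \pi_n(z)^i\, t_z^{k-i} \ot \pi_n(z)^{k-i}
\end{equation*}
(valid because the self-braiding of $\pi_n(z)$ is trivial, so the braided binomial coefficients coincide with the usual ones) shows that each $w^k$ is a coinvariant. For the reverse inclusion, I would invoke the theory of Hopf algebra projections: $\widehat{\B_n}(X,q) \# \ku G \twoheadrightarrow \B(X,q) \# \ku G$ is a central extension whose kernel is generated by $w$ as an ideal, and standard results (cf.\ \cite{S}) identify the coinvariant subalgebra with the polynomial ring $\ku[w]$; the required dimension count is again afforded by the decomposition above.

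The main obstacle will be the vanishing of the primitive commutator $[x_j, \pi_n(z)]$ in degree $\deg z + 1$. The conceptual argument sketched above — that the primitives of the pre-Nichols algebra in this degree are determined by the defining relations — is morally correct, but a fully rigorous verification in each of the six pairs may still require a direct computation via \cite{GAP}, in the spirit of the other computations carried out in the paper.
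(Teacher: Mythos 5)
Your reduction of centrality to the vanishing of the commutator $[\pi_n(z),x_j]$ in $\widehat{\B_n}(X,q)$, together with the verification that this commutator is primitive under facts (i) and (ii), is sound. Two remarks, though. First, fact (i) does not require a case-by-case check: $t_z$ is central in $G$ by \eqref{eqn:chiz}, so $g_{t_z\cdot j}=t_zg_jt_z^{-1}=g_j$, and faithfulness of the realization gives $t_z\cdot j=j$ at once. Second, and more seriously, the claimed implication ``no defining relation of $\B(X,q)$ lies strictly between degrees $n$ and $\deg z$ $\Longrightarrow$ the pre-Nichols algebra $\widehat{\B_n}(X,q)$ has no primitive of degree $\deg z+1$ beyond those forced by $\pi_n(z)$ $\Longrightarrow$ the commutator is zero'' is not a valid argument. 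The commutator already sits inside the ideal generated by $\pi_n(z)$ (so it is certainly ``forced by $\pi_n(z)$'' in the only sense one could make precise), and the hypothesis on the generating degrees of $\cJ(X,q)$ controls the primitives of $T(V)$ in that degree, not the primitives of the quotient $\widehat{\B_n}(X,q)$: a pre-Nichols algebra can and does carry nonzero primitives in degrees above the last defining relation. Hence your chain of implications does not force $[\pi_n(z),x_j]=0$, and the vanishing has to be established by a direct computation.

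You acknowledge this at the end, and indeed the paper makes no attempt at a conceptual argument: it verifies centrality directly with \cite{GAP} and \cite{GBNP} in most cases. What the paper does add, and you omit, is a cheap transport of centrality to the dual racks. For $\Aff(\F_5,3)$ and $\Aff(\F_7,5)$ it uses the Hopf algebra isomorphism
$\vartheta:(\widehat{\B_2}(b,N,-1)\#\ku G)^{\cop}\to \widehat{\B_2}(b,N^{-1},-1)\#\ku G$
(of Proposition \ref{prop:rel of Xdual q over G} and the proof of Lemma \ref{le:f max degree rel}) to turn $x_i\pi_2(z)-\pi_2(z)x_i=0$ into $y_i\pi_2(z')-\pi_2(z')y_i=0$ for the dual rack, using that $t_z$ is central and $\chi_z(g_i)=1$; this avoids a second round of computer calculation. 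For the coinvariants your appeal to a central extension and ``standard results'' is fine in spirit, but the paper simply quotes \cite[Remark 5.5]{AAnMGV} (normality implies the subalgebra generated by $\pi_n(z)t_z^{-1}$ is the coinvariant subalgebra) and \cite[Lemma 5.13]{AAnMGV} (it is a polynomial ring), which is the cleaner route and avoids the need to prove the vector-space decomposition $\widehat{\B_n}(X,q)\cong\B(X,q)\ot\ku[\pi_n(z)]$ separately.
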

\begin{proof}
We check that $\pi_n(z)$ is central using \cite{GAP} together with the package 
\cite{GBNP} 
in all the cases except for $\Aff(\F_5,3)$ and $\Aff(\F_7,2)$. For these we keep the notation of 
the previous proof and proceed as follows. If $i\in\F_b$,
\begin{align*}
0&=\vartheta(x_i\pi_2(z_{(b,N,-1)})-\pi_2(z_{(b,N,-1)})x_i)\\
&=(y_i\# 
g_i)(\pi_2(z_{(b,N^{-1},-1)})\#\,t_{z_{(b,N,-1)}})-(\pi_2(z_{(b,N^{-1},-1)})\#\,t_{z_{(b,N
,-1)}})(y_i\#g_i)\\
&=(y_i\pi_2(z_{(b,N^{-1},-1)})-\pi_2(z_{(b,N^{-1},-1)})y_i)\#\,t_{z_{(b,N,-1)}}g_i
\end{align*}
here we use the above lemma and that $t_{z_{(b,N,-1)}}$ is central. Thus the first part 
of 
the lemma is proved. Then $\pi_2(z)t_z^{-1}$ generates a normal subalgebra which forms 
the 
coinvariants by \cite[Remark 5.5]{AAnMGV}. It is a polynomial algebra by \cite[Lemma 
5.13]{AAnMGV}.
\end{proof}

\subsection{The Nichols algebras $\B(q,b,N)\in\ydgdual$}\label{subsec:nichols afin sobre el dual}
For each $(X,q)$ as above, consider the object $W(q,X)\in \ydgdual$ as in
\eqref{eqn:yetter-drinfeld-dual}. From now on, $T(q,b,N)$ and $\B(q,b,N)$ denote 
respectively its tensor algebra and the Nichols algebra with ideal of relations 
$\cJ(q,b,N)$. Let $\pi_n:T(q,X)\twoheadrightarrow\widehat{\B_n}(q,X)$ be the natural 
projection.

\begin{pro}\label{pro:rels-dual}
 \begin{enumerate}\renewcommand{\theenumi}{\alph{enumi}}\renewcommand{\labelenumi}{
(\theenumi)}
  \item The ideal $\cJ(-1,3,2)$ is generated by \eqref{eq:F3 2}.
\item The ideal $\cJ(-1,4,\omega)$ is generated by
\begin{align}\label{eq:F4 omega prima}
\tag{\ref{eq:F4 omega}'}x_i^2,& \quad x_jx_i+x_ix_{(\omega+1)i+\omega
j}+x_{(\omega+1)i+\omega j}x_j,\quad i,j\in\F_4
\end{align}
and $z'_{(-1,4, \omega)}:=(x_\omega
x_{\omega^2}x_0)^2+(x_1 x_{\omega^2}x_\omega)^2+(x_0 x_{\omega^2}x_1)^2$.
\item  The ideal $\cJ(-1,5,2)$ is generated by
\begin{align}\label{eq:F5 2 prima}
\tag{\ref{eq:F5 2}'}&x_i^2,\quad
x_jx_i+x_ix_{-i+2j}+x_{-i+2j}x_{3i-2j}+x_{3i-2j}x_j,\quad i,j\in\F_5
\end{align}
and $z'_{(-1,5, 2)}:=x_0x_2x_3x_1+x_1x_4x_3x_0$.
\item  The ideal $\cJ(-1,5,3)$ is generated by
\begin{align}\label{eq:F5 3 prima}
\tag{\ref{eq:F5 3}'}&x_i^2,\quad
x_ix_j+x_{-i+2j}x_i+x_{3i-2j}x_{-i+2j}+x_jx_{3i-2j},\quad i,j\in\F_5
\end{align}
and $z'_{(-1,5,3)}:=x_1x_3x_2x_0+x_0x_3x_4x_1$.
\item The ideal $\cJ(-1,7,3)$ is generated by
\begin{align}\label{eq:F7 3 prima}
\tag{\ref{eq:F7 3}'}&x_i^2, \quad x_jx_i+x_ix_{-2i+3j}+x_{-2i+3j}x_j, \quad i,j\in\F_7
\end{align}
and $z'_{(-1,7,3)}:=x_2x_6x_4x_2x_5x_0+x_1x_5x_2x_3x_6x_2+x_0x_6x_4x_5x_6x_1$.
\item The ideal $\cJ(-1,7,5)$ is generated by
\begin{align}\label{eq:F7 5 prima}
\tag{\ref{eq:F7 5}'}&x_i^2, \quad x_ix_j+x_{-2i+3j}x_i+x_jx_{-2i+3j}, \quad i,j\in\F_7
\end{align}
and $z'_{(-1,7,5)}:=x_0x_5x_2x_4x_6x_2+x_2x_6x_3x_2x_5x_1+x_1x_6x_5x_4x_6x_0$.
\item The ideal $\cJ(\zeta,4,\omega)$ is generated by $x_0^3$, $x_1^3$, 
$x_\omega^3$, 
$x_{\omega^2}^3$,
\begin{align*}
&\xi\, x_\omega x_1 -x_0x_\omega - \xi^2x_1x_0,\quad  \xi\, x_{\omega^2}x_\omega - 
x_0x_{\omega^2} -\xi^2x_\omega x_0,\\
&\xi\, x_{\omega^2}x_0 - x_1x_{\omega^2} +\xi^2 x_0x_1,\quad \xi\, x_{\omega^2}x_1 + 
x_\omega x_{\omega^2} + \xi^2x_1x_\omega\,\mbox{ and}\\
z'_{(\zeta,4,\omega)}:=&\,x_0^2 x_{\omega^2}x_0x_1^2 + x_0x_\omega x_0 x_{\omega^2}^2 
x_0 
+ x_1x_0 x_\omega^2 x_0^2 +
x_\omega x_{\omega^2}^2 x_\omega^2 x_1 - x_1^2 x_\omega^2 x_0x_\omega \\
+ x_1&x_{\omega^2}^2 x_\omega x_{\omega^2} x_1+x_1x_0 x_\omega x_{\omega^2}^2 x_\omega - 
x_\omega x_{\omega^2} x_0x_1x_{\omega^2}x_\omega + x_\omega x_{\omega^2}^2 x_\omega x_1 
x_0.
\end{align*}
\end{enumerate}
\end{pro}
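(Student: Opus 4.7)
The strategy is to transfer the known presentation of $\B(X,q)\in\ydg$ from Section~\ref{subsec:nichols} to a presentation of $\B(q,X)\in\ydgdual$ via the braided monoidal equivalence $(F,\eta)$ of \eqref{prop:equiv de cat gdual}. By Lemma~\ref{thm:generators of nichols algebras via functors}\eqref{eq:generadors thm:generators of nichols algebras via functors}, if $M=\bigoplus_n M^n\subset T(V(X,q))$ generates the ideal $\cJ(X,q)$, then $\bigoplus_n \eta_n^{-1}F(M^n)$ generates $\cJ(q,X)$; so it suffices to compute $\eta_n^{-1}$ on each of the relations listed in Subsections~\ref{subsubsec:nicholsF3}--\ref{subsubsec:nicholsA4} and match the outcome with the displayed list.

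From $\lambda(x_j)=g_j\ot x_j$ in $\ydg$ and the definition of $\eta$ in \eqref{prop:equiv de cat gdual}, one obtains by direct calculation
$$
\eta(x_i\ot x_j)=q_{ji}\,x_{j\rhd i}\ot x_j \quad\text{and}\quad \eta^{-1}(x_a\ot x_b)=q_{b,\,b\rhd^{-1}a}^{-1}\,x_{b\rhd^{-1}a}\ot x_b,
$$
and the higher $\eta_n^{-1}$ are obtained by iteration via \eqref{eq:asociatividad de los etas}. Applying $\eta_2^{-1}$ term by term to the quadratic relations of $\cJ(X,q)$ produces the quadratic relations of parts (a)--(f) after relabelling indices through $a\mapsto b\rhd^{-1}a$, together with the quadratic relations of part (g) by an entirely analogous computation with the non-constant cocycle $\zeta$. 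In particular $x_i^2$ is preserved because $i\rhd^{-1}i=i$ in every affine rack.

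For the top-degree relations $z_{(b,N,q)}$ we apply $\eta_n^{-1}$ (with $n=\deg z$) monomial by monomial, using the iterative formula of \eqref{eq:asociatividad de los etas}: each step replaces an adjacent pair $(x_a,x_b)$ by $(x_{b\rhd^{-1}a},x_b)$ up to a factor from $q^{-1}$, and in the constant $-1$ case the overall scalar is uniform across every monomial of $z$ and can be discarded. For the degree-three relations of $\B(4,\omega,-1)$ and $\B(5,2,-1)$ this is manageable by hand, but for the degree-six relations $z'_{(-1,7,3)}$ and $z'_{(\zeta,4,\omega)}$ the iterated reindexing becomes unwieldy, and this is the main obstacle; we handle those cases by direct computation with \cite{GAP} and the package \cite{GBNP}. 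Finally, parts (d) and (f) follow at once from (c) and (e) via Proposition~\ref{prop:rel of Xdual q over kalaG}, since the constant $-1$ cocycle satisfies $(-1)^{-*}=-1$ and $\Aff(\F_b,N)^*=\Aff(\F_b,N^{-1})$; explicitly, $z'_{(-1,5,3)}$ and $z'_{(-1,7,5)}$ are just the reversed (opposite) words of $z'_{(-1,5,2)}$ and $z'_{(-1,7,3)}$, as one readily checks.
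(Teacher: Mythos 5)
Your proposal is correct and follows the same route as the paper: parts (a), (b), (c), (e), (g) are obtained by applying the braided monoidal equivalence $(F,\eta)$ of \eqref{prop:equiv de cat gdual} together with Lemma~\ref{thm:generators of nichols algebras via functors} to the presentations of $\cJ(X,q)$ in Subsection~\ref{subsec:nichols}, while (d) and (f) follow from (c) and (e) via Proposition~\ref{prop:rel of Xdual q over kalaG}. Your extra detail (the explicit formula $\eta^{-1}(x_a\ot x_b)=q_{b,\,b\rhd^{-1}a}^{-1}\,x_{b\rhd^{-1}a}\ot x_b$, the uniform scalar for a constant cocycle, and the observation that $z'_{(-1,5,3)},z'_{(-1,7,5)}$ are opposite words) merely fleshes out the same argument; only the remark labelling $z_{(4,\omega,-1)}$ and $z_{(5,2,-1)}$ as ``degree-three'' is a slip, since those are of degrees $6$ and $4$ respectively, but this does not affect the argument.
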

\pf
In (a), (b), (c), (e) and (g) we apply the functor $(F,\eta)$ given by
\eqref{prop:equiv de cat gdual} and use Lemma \ref{thm:generators of nichols algebras
via functors}. In (d), resp. (f), we apply Proposition \ref{prop:rel of Xdual q over 
kalaG} since it corresponds to the dual case of (c), resp. (e), and the $2$-cocycle is 
$-1$.
\epf

Set $z'=z'_{(q,b,N)}=\eta_{\deg z_{(b,N,q)}}^{-1}(z_{(b,N,q)})$. Then 
$\ku\pi_n(z')\in\ydgdual$ as follows 
\begin{align}\label{eqn:chiz dual}
\pi_n(z')\_{-1}\ot\pi_n(z')\_{0}=\chi_z^{-1}\ot\pi_n(z')\quad\mbox{and}\quad 
\delta_g\cdot\pi_n(z)=\delta_{g,t_z^{-1}}\,\pi_n(z)
\end{align}
for all $g\in G$ by Lemma \ref{lem:generators of nichols algebras via functors}
\eqref{eq:lem:generators of nichols algebras via functors cocientes} and Lemma \ref{le:f
max degree rel}. Also, $\pi_n(z')$ is primitive in $\widehat{\B_n}(q,X)$ and therefore
\begin{align}\label{eq:z is skewprimi dual} 
\Delta(\pi_n(z'))=\pi_n(z')\ot1+\chi_z^{-1}\ot\pi_n(z')\quad\mbox{in}\quad\widehat{\B_n}(q,X)\#\ku^G. 
\end{align}

In the following, $\widehat{\B_n}(q,X)\#\ku^G$ is a right $\B(q,X)\#\ku^G$-comodule via 
the 
natural projection. 
\begin{lem}\label{le:f max degree rel prima}
It holds that $\pi_n(z')$ is central in $\widehat{\B_n}(q,X)$ and the subalgebra of right 
$\B(q,X)\#\ku^G$-coinvariants is the polynomial algebra
$\ku[\pi_n(z')\chi_z]$.
\end{lem}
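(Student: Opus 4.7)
The plan is to follow the two-step structure of Lemma \ref{le:z is central}, transferring centrality from the pointed setting via the braided equivalence $(F,\eta):\ydg\to\ydgdual$ of \eqref{prop:equiv de cat gdual}. This avoids redoing the computer-algebra verifications.

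For centrality of $\pi_n(z')$, I would aim to establish the identity $y_iz'-z'y_i=\eta_{\deg z+1}^{-1}F(x_iz-zx_i)$ for each $i\in X$. Unwinding the functor formula on $y_i$ and on $F(z)$ (whose $\ydgdual$-action and coaction are controlled by $t_z$ and $\chi_z$ from \eqref{eqn:chiz}), together with the associativity \eqref{eq:asociatividad de los etas}, this reduces to computing
\begin{align*}
\eta_{1,\deg z}(y_i\ot F(z)) & =\chi_z(g_i)\,y_i\ot z, &
\eta_{\deg z,1}(F(z)\ot y_i) & =\chi_i(t_z)\,z\ot y_{t_z\cdot i}.
\end{align*}
Lemma \ref{le:f max degree rel} gives $\chi_z(g_i)=1$. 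Centrality of $t_z$ in $G$ together with faithfulness of the realization forces $t_z\cdot i=i$, while $\chi_i(t_z)=1$ follows from $\chi_G(t_z)=\xi^{\deg z}=1$ in the constant case (with a case-by-case check for $(\Aff(\F_4,\omega),\zeta)$). Hence both twists collapse, and $y_iz'-z'y_i=\eta_{\deg z+1}^{-1}F(x_iz-zx_i)$. Since $x_iz-zx_i\in\cJ_n(X,q)$ by Lemma \ref{le:z is central}, Lemmas \ref{lem:generators of nichols algebras via functors}(a) and \ref{thm:generators of nichols algebras via functors}(b) place this element in $\cJ_n(q,X)$, proving $\pi_n(z')$ is central.

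For the coinvariants, from \eqref{eq:z is skewprimi dual} and the grouplikeness of $\chi_z\in\ku^G$ one computes $\Delta(\pi_n(z')\chi_z)=\pi_n(z')\chi_z\ot\chi_z+1\ot\pi_n(z')\chi_z$; projecting the second tensorand to $\B(q,X)\#\ku^G$ kills the first summand, so $\pi_n(z')\chi_z$ is coinvariant under the natural projection. Centrality then makes $\ku[\pi_n(z')\chi_z]$ a normal subalgebra; \cite[Remark 5.5]{AAnMGV} identifies it with the full subalgebra of coinvariants, and \cite[Lemma 5.13]{AAnMGV} yields the polynomial-algebra structure.

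The main obstacle is the bookkeeping in Step 1, specifically verifying that both $\eta$-twists collapse to the identity, which hinges on Lemma \ref{le:f max degree rel} and the centrality of $t_z$. The fallback would be a direct computer verification via \cite{GAP, GBNP}, parallel to the approach taken in Lemma \ref{le:z is central}.
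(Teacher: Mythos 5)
Your argument follows the same route as the paper's own proof: transfer centrality of $\pi_n(z)$ via $\eta_{\deg z+1}^{-1}$, using \eqref{eq:asociatividad de los etas} and Lemma \ref{le:f max degree rel} to collapse the twists, and close with \cite[Remark 5.5, Lemma 5.13]{AAnMGV}. Two small slips should be corrected. In the pair of displayed $\eta$-formulas the scalars are interchanged: from $\eta(v\ot w)=w\_{-1}v\ot w\_{0}$ one gets $\eta_{1,\deg z}(y_i\ot F(z))=\chi_i(t_z)\,y_{t_z\cdot i}\ot z$ (the coaction on $z$ is via $t_z$) and $\eta_{\deg z,1}(F(z)\ot y_i)=\chi_z(g_i)\,z\ot y_i$ (the coaction on $x_i$ is via $g_i$). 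Since both scalars reduce to $1$, the conclusion stands; and your explicit check that $\chi_i(t_z)=\chi_G(t_z)=q^{\deg z}=1$ (with a separate verification for the cocycle $\zeta$) makes explicit a step the paper uses silently, which is a genuine improvement. In the coinvariance paragraph, however, applying $(\id\ot\pi)$ to $\Delta(\pi_n(z')\chi_z)=\pi_n(z')\chi_z\ot\chi_z+1\ot\pi_n(z')\chi_z$ kills the \emph{second} summand, not the first, and returns $\pi_n(z')\chi_z\ot\chi_z$; it is $(\pi\ot\id)$ that annihilates the first summand and gives $1\ot\pi_n(z')\chi_z$, which is the coinvariance condition the lemma needs (the paper's phrase ``right coinvariants'' should be read accordingly). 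As written, your sentence mis-identifies which projection is taken and which summand dies. With those two corrections the proof matches the paper's.
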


\begin{proof}
If $i\in\F_b$, then $\pi_n\eta_7^{-1}(x_iz-zx_i)=0$ by Lemma \ref{le:z is central} and 
Lemma \ref{thm:generators of nichols algebras via functors} \eqref{eq:pre generatos thm:generators of nichols algebras via functors}. By 
\eqref{eq:asociatividad de los etas},
$\eta_7^{-1}(x_iz-zx_i)=x_{t_z^{-1}\cdot i}z'-z'x_i=x_{i}z'-z'x_i$, here we use Lemma 
\ref{le:f max degree rel} and that $t_z$ is central.
Hence $\pi_n(z')$ is central in $\widehat{\B_n}(q,X)$. The lemma follows using 
\cite[Remark 5.5, Lemma 5.13]{AAnMGV} as in Lemma \ref{le:z is central}. 
\end{proof}

\section{Lifting via cocycle deformation}\label{very general results} 

Let $H$ be a semisimple Hopf algebra and $V\in\ydh$, $\dim V<\infty$. Assume that 
the ideal $\cJ(V)$ defining 
the Nichols algebra $\B(V)$ is finitely generated and let $\mG$ be a minimal set 
of homogeneous generators of $\cJ(V)$. In \cite{AAnMGV} a strategy was developed to 
compute all the liftings of $\B(V)$ 
over $H$ as cocycle deformations of $\B(V)\#H$. We briefly recall this 
strategy, see \cite[Section 5]{AAnMGV} for details.

Set $\mT(V)=T(V)\# H$ and $\mH=\B(V)\# H$. Let $\mG =  \mG_0 \cup \mG_1 \cup\dots \cup 
\mG_{N}$ be an {\it adapted stratification} of $\mG$ \cite[5.1]{AAnMGV}. Among other things, this ensures 
that
$$\B_k= \B_{k-1}/\langle \mG_{k-1}\rangle,\quad1\leq k\leq N+1,$$
are braided Hopf algebras in $\ydh$ where $\B_0=T(V)$. Then we have a chain of subsequent 
quotients of Hopf algebras
$$\mT(V)\twoheadrightarrow\B_1\# H\twoheadrightarrow\dots\twoheadrightarrow
\B_N\# H\twoheadrightarrow \mH=\B_{N+1}\# H.$$
The Strategy basically consists in the following two steps:

\begin{itemize}
 \item[(1)] To compute at each level a family of cleft objects of $\B_k\# H$ as 
quotients of 
cleft objects of $\B_{k-1}\# H$, following the results in \cite{Gu}. 
\end{itemize}
To do this, we start with the trivial
cleft object for $\mT(V)$. In the final level, we have a set $\Lambda$ of cleft objects
of $\mH$ and hence a list of cocycle deformations
$L$, which arise as $L\simeq L(\mA,\mH)$, for $\mA\in\Lambda$ as in \cite{S}. 
\begin{itemize}
 \item[(2)] To check that any lifting of $\B(V)$ over $H$ is obtained as one of
these deformations.
\end{itemize}
In {\it loc.~ cit.} a series of tools to deduce this was developed. In particular, it was 
studied in \cite[Section 4]{AAnMGV} the shape of all the possible liftings. We refine the 
results there for copointed liftings in Subsection 
\ref{general results}. 

\smallbreak

We use the Strategy to prove the main theorems. In that order, we carry out the 
Strategy in the next subsection under certain general conditions which are satisfied in 
our case.

\subsection{Pointed Lifting of Nichols algebras with a single top degree 
relation}\label{general results groups} 

Let $X$ be an indecomposable rack with a 2-cocycle $q$. Let $G$ be a finite group
and
$(\cdot,g,\{\chi_i\}_{i\in X})$ be a principal YD-realization of $(X,q)$. Let
$V=\ku\{x_i\}_{i\in X}$ be the
corresponding Yetter-Drinfeld module over $G$, see \eqref{eqn:yetter-drinfeld}.
Assume that the Nichols algebra $\B(V)$ is finite dimensional. 

Let $n\in\N$ be such that $\ord q_{ii}=n\geq2$. Then $x_i^n\in\cJ(V)$ for all $i\in X$.

\smallbreak

Recall from \cite{GG} that the space of quadratic relations in $\cJ(V)$ is spanned by
$\{b_C\}_{C\in \mR'}$ where $\mR'$ is a subset of the set $\mR=X\times X/\sim$ of
classes of the equivalence relation generated by $(i,j)\sim(i\rhd j,i)$. More precisely,
$C=\{(i_2,i_1), \ldots, (i_{n(C)},i_1)\}\in\mR'$ iff $\prod_{h=1}^{n(C)} 
q_{i_{h+1},i_h}=(-1)^{n(C)}$ and then
\begin{align}\label{eqn:bc}
 &b_C:= \sum_{h=1}^{n(C)}\eta_h(C)\, x_{i_{h+1}}x_{i_h},
\end{align}
where $\eta_1(C)=1$ and $\eta_h(C)=(-1)^{h+1}q_{{i_2i_1}}q_{{i_3i_2}}\ldots 
q_{{i_hi_{h-1}}}$, $h\ge 2$.

\smallbreak

Set $\mT(V)=T(V)\#\ku G$ and $\pi_n:T(V)\to \widehat{\B_n}(V)$. 
We assume that there is a generator $z\in\cJ(V)$ with $\deg z>n$ such that
\begin{itemize}
\item $\ku \pi_n(z)\in\ydg$, that is, there exist a central  $t_z\in G$ and a 
multiplicative character $\chi_z:G\rightarrow\ku^*$ such that \eqref{eqn:chiz} holds; 
\item $\pi_n(z)$ is primitive in $\widehat{\B_n}(V)$ and hence  \eqref{eq:z is 
skewprimi} is satisfied in $\widehat{\B_n}(V)\#\ku G$;
\item the following holds in $\widehat{\B_n}(V)\#\ku G$:
\begin{align}
\label{eqn:znormal1} x_i\pi_n(z)= \pi_n(z)x_i, \ i\in X \quad \text{ and } \quad   
t_z\pi_n(z)= \pi_n(z)t_z.
\end{align}
\end{itemize}

We assume that the ideal $\cJ(V)$ admits an adapted stratification:
\begin{align}\label{eqn:stratification}
\mG_0&=\{x_i^n:\, i\in X\}, \, \mG_1=\{b_C:\, C\in\mR'\}\setminus \{x_i^2:\, i\in
X\}, \, \mG_2=\{z\, \}
\end{align}
and apply the Strategy in this setting. Set $\mH_i=\B_{i-1}/\langle\mG_{i-1}\rangle\#\ku 
G$ for $i=1,2,3$ with $\B_0=T(V)$. We also assume that
\begin{align}
\label{eqn:gin} \qquad g_i^n&\neq g_j \quad \text{ and } \quad  g_k \neq g_ig_j, && \text{
for every } i,j,k\in
X,&\\
\label{eqn:g(z)}\qquad  t_z&\neq g_i, &&  \text{ for every } i\in
X.&
\end{align}
Notice that \eqref{eqn:gin} is not a relevant restriction by Lemma \ref{le:neq in group 
rack}. In particular, this lemma applies to affine racks.

We shall consider scalars $\lambda_1, \lambda_2, \lambda_3\in\ku$ subject to the following
conditions
\begin{align}
\label{eqn:cond1-A} &\lambda_1=0 \, \text{ if }\, \chi_i^n\neq \eps\, \forall\, i, &&
\lambda_2=0 \, \text{ if }\,  \chi_i\chi_j\neq \eps\,  \forall\, i,j, &&\lambda_3=0 \,
\text{ if } \, \chi_z \neq \eps.
\end{align}

Let $\lambda_1,\lambda_2\in\ku$ subject to \eqref{eqn:cond1-A} and let $b_C$ be as in
\eqref{eqn:bc}. Set
\begin{align}
\label{a} \mA_1(\lambda_1) &= \mT(V)/\langle x_i^n-\lambda_1 :\, i\in X\rangle,\\
\label{eqn:galoisnonula} \mA_2(\lambda_1,\lambda_2)&= \mA_1(\lambda_1)/\langle b_C -
\lambda_2\,:\, C\in\mR'\rangle.
\end{align}
Note that $\mA_1(\lambda_1)\neq 0$. In fact, $\mA_1(\lambda_1)\simeq T(V)/\langle 
x_i^n-\lambda_1 :\, i\in X\rangle\ot\ku G$ as vector spaces by the choice of 
$\lambda_1$ in \eqref{eqn:cond1-A} and we can define a nonzero 
algebra map $F:T(V)/\langle x_i^n-\lambda_1 :\, i\in X\rangle\rightarrow\ku$ by 
$F(x_i)=\lambda_1^{1/n}$ for all $i\in X$.

\smallbreak

Set also $\mL_1(\lambda_1)=\mT(V)/\langle x_i^n-\lambda_1(1- g_i^n):\, i\in
X\rangle$ and 
\begin{align*}
 \mL_2(\lambda_1,\lambda_2)&=\mL_1(\lambda_1)/\langle b_C - \lambda_2(1 - g_ig_j):\,
C\in\mR',\,
(i,j)\in C \rangle.
\end{align*}
It is straightforward to see that $\mL_k$ is a Hopf algebra quotient of $\mT(V)$ and
$\mA_k$ is naturally a $(\mL_k, \mH_k)$-bicomodule algebra with coactions $\delta_L^k$, 
$\delta_R^k$ induced by the comultiplication in $\mT(V)$.

\begin{pro}\label{pro:lifting1}Let $\mA_1=\mA_1(\lambda_1)$,
$\mA_2=\mA_2(\lambda_1,\lambda_2)$, $\mL_1=\mL_1(\lambda_1)$  and
$\mL_2=\mL_2(\lambda_1,\lambda_2)$. Assume $\mA_2\neq 0$. Then
\begin{enumerate}\renewcommand{\theenumi}{\alph{enumi}}\renewcommand{\labelenumi}{
(\theenumi)}
\item $\mA_k$ is a right Galois object of $\mH_k$. 
\item There is a section $\gamma_k:\mH_k\to \mA_k$ with ${\gamma_k}_{|\ku G}=\id$.
\item $L(\mA_k,\mH_k)\cong \mL_k$. Hence $\mL_k$ is a cocycle deformation of $\mH_k$.
\end{enumerate}
\end{pro}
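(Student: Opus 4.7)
The plan is to argue by induction on $k\in\{0,1,2\}$, invoking at each step the general mechanism of \cite[Section 5]{AAnMGV} (itself based on the G\"unther--Masuoka machinery of \cite{Gu}) for constructing cleft and biGalois objects as compatible deformations of defining relations. At the base case $k=0$ take $\mA_0=\mL_0:=\mT(V)$, which is tautologically a $(\mT(V),\mT(V))$-biGalois object with section $\id$. Assume inductively that $\mA_{k-1}$ is a cleft object of $\mH_{k-1}$ equipped with a section $\gamma_{k-1}$ restricting to $\id$ on $\ku G$, and with a left $\mL_{k-1}$-coaction making it an $(\mL_{k-1},\mH_{k-1})$-biGalois object.

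For the passage $k-1\to k$, note that $\mH_k$ is the quotient of $\mH_{k-1}$ by the Hopf ideal generated by $\mG_{k-1}\#1$. The plan is to: (i) lift each $r\in\mG_{k-1}$ to an element $\widetilde r\in \mA_{k-1}$ via the section $\gamma_{k-1}$; (ii) verify that the right $\mH_{k-1}$-coaction sends $\widetilde r\mapsto \widetilde r\ot 1 + g_r\ot r$ modulo lower relations, so that imposing $\widetilde r=\lambda$ in a further quotient is right $\mH_{k-1}$-colinear precisely when $\lambda$ is $G$-invariant, which is exactly condition \eqref{eqn:cond1-A}; (iii) verify the analogous compatibility on the left with the scalar $\lambda(1-g_r)\in\mL_{k-1}$, which is automatic because $1-g_r$ is skew-primitive. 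For $k=1$ the generators $x_i^n$ are $(g_i^n,1)$-primitive in $\mT(V)$, and by \eqref{eqn:gin} the $g_i^n$ are distinct from the $g_j$, so the ideals involved are correctly generated. For $k=2$ the relations $b_C$ are $(g_ig_j,1)$-primitive modulo the already-imposed $x_i^n=\lambda_1$, and the condition $\chi_i\chi_j=\eps$ permits a nontrivial $\lambda_2$.

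Granted nonzero-ness---which holds for $k=1$ via the algebra map $x_i\mapsto\lambda_1^{1/n}$ enabled by \eqref{eqn:cond1-A}, and is assumed for $k=2$---the construction simultaneously furnishes a right $\mH_k$-Galois structure on $\mA_k$ with section $\gamma_k$ extending $\id_{\ku G}$, proving (a) and (b), and a left $\mL_k$-coaction turning $\mA_k$ into an $(\mL_k,\mH_k)$-bicomodule algebra. Appealing to \eqref{eqn:F} and the uniqueness of the left partner of a right Galois object \cite[Section 3]{S}, this yields $L(\mA_k,\mH_k)\cong\mL_k$, and \cite[Theorem 3.9]{S} then gives that $\mL_k$ is a cocycle deformation of $\mH_k$, completing (c). The main technical obstacle I expect lies at $k=2$: one must track the coproduct of each $\widetilde{b_C}$ in $\mA_1\ot\mH_1$ (where $x_i^n=\lambda_1$ rather than $0$), and confirm that the ideal generated by $\{\widetilde{b_C}-\lambda_2\}$ is a two-sided coideal under both coactions while the quotient remains nonzero. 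The hypotheses \eqref{eqn:gin}, \eqref{eqn:g(z)} and \eqref{eqn:cond1-A} are tailored precisely so that this verification goes through.
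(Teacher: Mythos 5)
Your proposal follows essentially the same route as the paper: the stepwise G\"unther/Masuoka cleft-object machinery from \cite[Section 5]{AAnMGV}, applied along the adapted stratification, with the final identification of $\mL_k$ as $L(\mA_k,\mH_k)$ via Schauenburg. The paper is terser but more precise about where \cite[Theorem 8]{Gu} actually bites: it names the right coideal subalgebras $Y_1=\ku[x_i^n g_i^{-n}]$ and, for $k=2$, the $Y_{2,C}=\ku[b_C g_j^{-1}g_i^{-1}]$ treated one class $C$ at a time, and then simply cites \cite[Lemma 5.8(b)]{AAnMGV} for the section and \cite[Proposition 5.10]{AAnMGV} for the identification of the left Galois partner; these three citations discharge exactly the verifications your narrative gestures at.
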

\pf
(a) follows by \cite[Theorem 8]{Gu} applied to a suitable right coideal subalgebra $Y_k$.
If $k=1$, we take $Y_1$ to be generated by $x_i^ng_i^{-n}$ for some $i\in X$. If $k=2$,
this is done in several steps, one for each $C\in\mR'$, up to conjugacy, taking $Y_{2,C}$ 
as the subalgebra generated by $b_Cg_j^{-1}g_i^{-1}$ for $(i,j)\in C$.

(b) This is \cite[Lemma 5.8 (b)]{AAnMGV}.

(c) follows by applying \cite[Proposition 5.10]{AAnMGV}.
\epf

It is possible to use \cite{GAP, GBNP} in specific examples to check that $\mA_2(\lambda_1,\lambda_2)\neq0$. 
We 
do this in the next section to prove Main Theorem  1. We now compute Galois objects 
for $\mH=\mH_3=\B(V)\#\ku G$.

\begin{pro}\label{pro:liftings3}
Assume that $\mA_2(\lambda_1,\lambda_2)\neq 0$ for some $\lambda_1,\lambda_2$. 
\begin{enumerate}\renewcommand{\theenumi}{\alph{enumi}}\renewcommand{\labelenumi}{
(\theenumi)}
\item There exists $a_X\in\mA_2(\lambda_1,\lambda_2)$ and $\lambda_3\in\ku$ subject to
\eqref{eqn:cond1-A} such
that
\begin{align*}
\mA=\mA(\lambda_1,\lambda_2,\lambda_3)=\mA_2(\lambda_1,\lambda_2)/\langle z - a_X - 
\lambda_3 
\rangle. 
\end{align*}
is a Galois object of $\mH_3$. 
\item  $
L(\mA,\mH_3)\cong \mL_3(\lambda_1,\lambda_2,\lambda_3)$ where
$$
\mL_3(\lambda_1,\lambda_2,\lambda_3) = \mL_2(\lambda_1,\lambda_2)/\langle
z-s_X-\lambda_3(1-t_z)\rangle
$$
and $s_X\in\mL_2(\lambda_1,\lambda_2)$ is such that
\begin{align}\label{eqn:zdeformed}
 (z-s_X)\ot 1=\delta_L(\gamma(z))-t_z\ot \gamma(z).
\end{align}
\end{enumerate}
\end{pro}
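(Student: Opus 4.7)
The plan is to carry out once more the construction of Proposition \ref{pro:lifting1}, one level higher, by applying \cite[Theorem 8]{Gu} to produce the Galois object $\mA$ as a cleft-object quotient of $\mA_2$, and \cite[Proposition 5.10]{AAnMGV} to read off its left Hopf algebra. First I would check that the image of $z$ in $\mH_2 = \B_2\#\ku G$ retains the two essential properties of $\pi_n(z)$: centrality, guaranteed by \eqref{eqn:znormal1}, and $(t_z,1)$-primitivity, guaranteed by \eqref{eq:z is skewprimi} together with the fact that $\deg z > n$, so that the quadratic relations $b_C-\lambda_2(1-g_ig_j)$ imposed at level two do not affect $\Delta(z)$. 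Consequently, $Y_3 := \ku\langle z t_z^{-1}\rangle$ is a normal right coideal subalgebra of $\mH_2$ with $\mH_3\simeq \mH_2/\mH_2 Y_3^+$.

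For (a) I would apply \cite[Theorem 8]{Gu} to the cleft object $\mA_2$ with respect to $Y_3$, using the section $\gamma=\gamma_2$ from Proposition \ref{pro:lifting1}(b). Since $\gamma$ is right $\mH_2$-colinear and $z$ is $(t_z,1)$-primitive, a direct computation in the $(\mL_2,\mH_2)$-bicomodule $\mA_2$ shows that $\delta_L(\gamma(z)) - t_z\ot\gamma(z)$ lies in $\mL_2\ot\ku$, hence equals $s_X\ot 1$ for a unique $s_X\in\mL_2$; this is precisely \eqref{eqn:zdeformed}, and it isolates $a_X\in\mA_2$ as the coinvariant part of $\gamma(z)$ (equivalently, as the image of $s_X$ under the natural identification of the coinvariant subalgebras). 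The element $z - a_X - \lambda_3$ is then a scalar multiple of $1$ modulo $Y_3$, and the constraint \eqref{eqn:cond1-A} on $\lambda_3$ is forced by $\ku G$-equivariance via $g\cdot z = \chi_z(g)\, z$. Günther's theorem then yields that $\mA = \mA_2/\langle z - a_X - \lambda_3\rangle$ is nonzero and a cleft, hence Galois, object of $\mH_3$, with a section inherited from $\gamma$.

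For (b), I would invoke \cite[Proposition 5.10]{AAnMGV} to realise $L(\mA,\mH_3)$ as the quotient of $L(\mA_2,\mH_2)=\mL_2$ by the image, under the isomorphism \eqref{eqn:F}, of the new defining relation of $\mA$. Since \eqref{eqn:F} essentially converts a relation $w=0$ in $\mA$ into the relation determined by $\delta_L(\gamma(w))$ in $\mL_2$, the equation $z-a_X-\lambda_3 = 0$ translates, by \eqref{eqn:zdeformed}, into $z-s_X-\lambda_3(1-t_z) = 0$, producing exactly $\mL_3(\lambda_1,\lambda_2,\lambda_3)$. The main obstacle I foresee is pinning down the element $a_X$ unambiguously and verifying that $z - a_X - \lambda_3$ defines a \emph{normal} relation in $\mA_2$, so that Günther's construction indeed applies; this is the step where the centrality hypothesis \eqref{eqn:znormal1} on $\pi_n(z)$, transported to $\mH_2$, does the essential work, and where the non-vanishing assumption $\mA_2(\lambda_1,\lambda_2)\neq 0$ is needed to guarantee that the final quotient $\mA$ does not collapse.
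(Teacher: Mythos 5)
Your overall strategy is the correct one and follows the paper's lines, but several of the specific ingredients are off.

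First, the relevant result from G\"unther at this level is \cite[Theorem 4]{Gu}, not \cite[Theorem 8]{Gu}. In Proposition \ref{pro:lifting1} one quotients by skew-primitives and Theorem 8 applies; here $z$ is only skew-primitive after reducing by the lower-degree relations, so the paper instead computes the subalgebra of right $\mH_3$-coinvariants $\mH_2^{\co\mH_3}$ and the set $\Alg_{\mH_2}^{\mH_2}(\mH_2^{\co\mH_3},\mA_2)$ and feeds these into Theorem 4. Concretely, one must show that $Y=\ku\langle zt_z^{-1}\rangle$ is not only normal (from \eqref{eqn:znormal1}) but also a \emph{polynomial} algebra, which requires \cite[Lemma 5.13]{AAnMGV}; only then does \cite[Remark 5.5]{AAnMGV} give $Y=\mH_2^{\co\mH_3}$, and \cite[Remark 5.11]{AAnMGV} pins down the algebra maps as $f(zt_z^{-1})=\gamma(zt_z^{-1})-\lambda_3 t_z^{-1}$. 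You name $Y_3$ and note centrality but skip the polynomial-algebra step and the classification of algebra maps, both of which carry real content.

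Second, your description of $a_X$ is not right: the paper sets $a_X=z-\gamma(z)\in\mA_2$, which is the difference between the na\"ive lift and the colinear section, not ``the coinvariant part of $\gamma(z)$''. Relatedly, your display $\delta_L(\gamma(z))-t_z\ot\gamma(z)=s_X\ot 1$ conflicts with \eqref{eqn:zdeformed}, which reads $(z-s_X)\ot 1$ on the left; your $s_X$ is the paper's $z-s_X$. The conceptual point — that the expression lies in $\mL_2\ot\ku 1$ by a bicomodule/colinearity computation — is correct, and your verification of it is fine.

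Third, for (b) the paper cites \cite[Corollary 5.12]{AAnMGV} rather than \cite[Proposition 5.10]{AAnMGV}; the corollary is the form tailored to reading off $\mL_3$ from the new generator of the ideal via \eqref{eqn:zdeformed}. So the argument you sketch is morally the intended one, but the citations and the bookkeeping around $a_X$, $s_X$, and the coinvariant subalgebra need to be fixed before it would close.
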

\pf 
Set $\mH'=\mH_2$, $\mA'=\mA_2(\lambda_1,\lambda_2)$, $\mL'=\mL_2(\lambda_1, \lambda_2)$, 
$\gamma=\gamma_2:\mH'\to \mA'$. We consider $\mH'$ as a right $\mH$-comodule via the 
natural projection.
We use \cite[Theorem 4]{Gu} to find cleft objects of $\mH$. For that, we have to 
compute the subalgebra $\mH'^{\co\mH}$ of right $\mH$-coinvariants and the set 
$\Alg_{\mH'}^{\mH'}(\mH'^{\co\mH}, \mA')$ of algebra maps from $\mH'^{\co\mH}$ to 
$\mA'$ in $\mathcal{YD}_{\mH'}^{\mH'}$.

Let $Y$ be the subalgebra of $\mH'$ generated by $zt_z^{-1}$. Then $Y$ is normal, by 
\eqref{eqn:znormal1}, and a polynomial algebra, by \cite[Lemma 5.13]{AAnMGV} and \eqref{eqn:znormal1}.  Hence 
$Y=\mH'^{\co\mH}$ by \cite[Remark 5.5]{AAnMGV}. By \cite[Remark 5.11]{AAnMGV} $f\in 
\Alg_{\mH'}^{\mH'}(Y, \mA')$ if and only if $f(zt_z^{-1})=\gamma(zt_z^{-1})-\lambda_3 
t_z^{-1}$
for some $\lambda_3\in\ku$.

\smallbreak

Therefore (a) follows with $a_X=z-\gamma(z)$ by \cite[Theorem 4]{Gu}. Now (b) follows by 
\cite[Corollary 5.12]{AAnMGV}.
\epf

We need to compute $\gamma_2(z)$, $\delta^2_R(z)$ and $\delta^2_L\big(\gamma_2(z)\big)$ 
to apply the above proposition. We explain in Appendix how we can do this using 
\cite{GAP, GBNP}.

\smallbreak

The pointed liftings of $\B(V)$ are given by the next theorem.

\begin{theorem}\label{thm:all-liftings}
Let $L$ be a lifting of $\B(V)$ over $\ku G$. Then there exist scalars 
$\lambda_1,\lambda_2,\lambda_3\in\ku$ such that $L\cong 
\mL_3(\lambda_1,\lambda_2,\lambda_3)$ and hence $L$ is a cocycle deformation of 
$\B(V)\#\ku G$.
\end{theorem}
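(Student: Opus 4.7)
The plan is to show that the lifting map $\phi:\mT(V)\to L$ from \eqref{eq:properties of A and phi} factors through $\mL_3(\lambda_1,\lambda_2,\lambda_3)$ for a suitable choice of scalars; since $\mL_3$ is a cocycle deformation of $\mH$, it has the same dimension as $L$, so the induced map is an isomorphism. I would proceed by traversing the stratification \eqref{eqn:stratification} and analyzing the image under $\phi$ of each type of generator as a skew-primitive element in $L$.

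\textbf{Power relations.} Since $q_{ii}$ is a primitive $n$-th root of unity, the quantum binomial formula in the braided setting gives $\Delta(x_i^n)=x_i^n\ot 1+g_i^n\ot x_i^n$ in $\mT(V)$; hence $\phi(x_i^n)$ is $(g_i^n,1)$-skew primitive in $L$. The hypothesis \eqref{eqn:gin} ($g_i^n\neq g_j$ for all $i,j$) means no nonzero element of $\phi(V)$ contributes to $(g_i^n,1)$-skew primitives, forcing $\phi(x_i^n)=\mu_i(1-g_i^n)\in\ku G$. Indecomposability of the rack together with the intertwining $\phi(tx_i^n t^{-1})=\chi_i(t)^n t\phi(x_i^n)t^{-1}$ propagates these scalars to a common value $\lambda_1$; the character condition \eqref{eqn:cond1-A} forces $\lambda_1=0$ when $\chi_i^n\neq\eps$ for all $i$. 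Thus $\phi$ factors through $\mL_1(\lambda_1)$.

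\textbf{Quadratic relations.} Modulo $\mG_0$, each $b_C$ is $(g_ig_j,1)$-skew primitive in $\mH_1$ (for $(i,j)\in C$). Again \eqref{eqn:gin} ($g_k\neq g_ig_j$) excludes contributions from $\phi(V)$, so $\phi(b_C)=\mu_C(1-g_ig_j)\in\ku G$. The action of $G$ permutes the equivalence classes in $\mR'$ compatibly with the characters, and indecomposability of $X$ together with \eqref{eqn:cond1-A} yields a single scalar $\lambda_2$ with $\lambda_2=0$ if $\chi_i\chi_j\neq\eps$ for all $i,j$. Thus $\phi$ descends to $\mL_2(\lambda_1,\lambda_2)$.

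\textbf{Top degree relation.} By Proposition \ref{pro:lifting1} we have the section $\gamma=\gamma_2:\mH_2\to\mA_2(\lambda_1,\lambda_2)$ and, as in Proposition \ref{pro:liftings3}, the element $s_X\in\mL_2(\lambda_1,\lambda_2)$ defined by \eqref{eqn:zdeformed} is precisely the correction that makes $z-s_X$ behave as a $(t_z,1)$-skew primitive (using \eqref{eq:z is skewprimi} and \eqref{eqn:znormal1}). Hence $\phi(z)-\phi(s_X)$ is $(t_z,1)$-skew primitive in $L$; hypothesis \eqref{eqn:g(z)} ($t_z\neq g_i$) excludes contributions from $\phi(V)$, so $\phi(z)-\phi(s_X)=\lambda_3(1-t_z)$ with $\lambda_3=0$ if $\chi_z\neq\eps$ by \eqref{eqn:cond1-A}. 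Consequently $\phi$ factors through a surjection $\mL_3(\lambda_1,\lambda_2,\lambda_3)\twoheadrightarrow L$, and since $\dim\mL_3=\dim L=\dim\B(V)\cdot|G|$ by Proposition \ref{pro:liftings3}(b), this surjection is an isomorphism. The cocycle deformation statement then follows from $L\cong\mL_3\cong L(\mA,\mH_3)$ and \cite[Theorem 3.9]{S}.

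The main obstacle is the second step: ensuring that a \emph{single} scalar $\lambda_2$ (and not one scalar per class $C$) controls all quadratic relations. This requires combining the 1-cocycle condition $\chi_i(ht)=\chi_{t\cdot i}(h)\chi_i(t)$ with the indecomposability of $X$ to propagate the individual $\mu_C$'s into a common value consistently with the relation $b_{g\cdot C}=\chi_i(g)\chi_j(g)\,g\cdot b_C$-type identities; the analogous issue at the cubic/sextic level for $z$ is bypassed thanks to the explicit construction of $s_X$ via \eqref{eqn:zdeformed}, which encodes exactly the inhomogeneous correction needed.
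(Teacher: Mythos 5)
Your proposal is correct and follows essentially the same route as the paper: using the lifting map $\phi$, showing each generator in $\mG_0\cup\mG_1$ lands in $L_{[0]}=\ku G$ via skew-primitivity and \eqref{eqn:gin}, then handling the top degree relation via $s_X$ and \eqref{eqn:g(z)}, and finally invoking a dimension count. The only difference is that you flesh out the justification for why a single scalar per stratum suffices (via the $G$-intertwining and indecomposability), a detail the paper leaves implicit in the phrase ``Hence $\phi$ induces $\phi'$.''
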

\pf
Consider the lifting map $\phi:\mT(V)\to L$ defined by \eqref{eq:properties of A and 
phi}. 
If $r\in \mG_0\cup \mG_1$, then $r$ is a skew-primitive in $\mT(V)$ and thus $\phi(r)\in 
L_{[1]}$. Moreover, $\phi(r)\in L_{[0]}$ by \eqref{eqn:gin}. Hence $\phi$ induces
$\phi':\mL_2(\lambda_1,\lambda_2)\twoheadrightarrow L$ for some $\lambda_1,\lambda_2\in 
\ku$. 

 It follows that $\overline{z}=z-s_X$ is a $(1,t_z)$-primitive in 
$\mL_2(\lambda_1,\lambda_2)$ and thus
$\phi'(\overline{z})\in L_{[1]}$. By \eqref{eqn:g(z)} we see that $\phi'(\overline{z})\in
L_{[0]}$ and therefore there is
$\lambda_3\in\ku$ such that $\phi'(\overline{z})=\lambda_3(1-t_z)$. Therefore $\phi'$
induces $\phi'':\mL(\lambda_1,\lambda_2,\lambda_3)\twoheadrightarrow
L$ and this is an isomorphism since both algebras have dimension $\dim\B(V)|G|$.
\epf

To avoid repetitions, we further normalize the scalars $\lambda_1,\lambda_2,\lambda_3$ by
\begin{align}
\label{eqn:cond1-L} &\lambda_1=0\, \text{ if }\,  g_i^n=1, && \lambda_2=0\, \text{ if
} \, g_ig_j=1, && \lambda_3=0\,  \text{ if }\, t_z=1,
\end{align}
and consider the set 
\begin{align}\label{eqn:Sx}
  \Ss_X= \{(\lambda_1,\lambda_2,\lambda_3)\in\ku^3|\, \text{satisfying
\eqref{eqn:cond1-A} and \eqref{eqn:cond1-L}}\}.
\end{align}
\begin{pro}\label{pro:isoclases}
If $(\lambda_1,\lambda_2,\lambda_3), (\lambda_1',\lambda_2',\lambda_3')\in \Ss_X$ then
$\mL_3(\lambda_1,\lambda_2,\lambda_3)\cong\mL_3(\lambda_1',\lambda_2',\lambda_3')$ if and
only if $(\lambda_1,\lambda_2,\lambda_3)=\mu(\lambda_1',\lambda_2',\lambda_3')$ for some $\mu\in\ku$.
\end{pro}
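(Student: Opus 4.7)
My plan for Proposition \ref{pro:isoclases} is to prove both implications by reducing every Hopf isomorphism (or constructing one) to a $G$-equivariant rescaling of the generators, $x_i \mapsto \alpha\, x_i$, and then reading off how the three parameters transform.

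For the ``if'' direction, assume $(\lambda_1, \lambda_2, \lambda_3) = \mu(\lambda'_1, \lambda'_2, \lambda'_3)$ for some $\mu\in\ku$. If $\mu=0$ then $\lambda=\lambda'$ and the identity works, so suppose $\mu\in\ku^*$. I would pick $\alpha\in\ku^*$ realising $\mu$ through the degrees of the defining relations (degree $n$ for the $x_i^n$, degree $2$ for the $b_C$, and $\deg z$ for the top relation) and define $f\colon \mL_3(\lambda')\to\mL_3(\lambda)$ by $f|_{\ku G}=\id$ and $f(x_i)=\alpha\, x_i$. Coalgebra compatibility is automatic from $\Delta(x_i)=x_i\ot 1+g_i\ot x_i$, and YD-equivariance holds because $\alpha$ does not depend on $i$. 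That the relations are preserved then follows from the degree-homogeneity of each element of $\mG$ in the natural grading of $T(V)$; the care point is that the normalisation \eqref{eqn:cond1-A}--\eqref{eqn:cond1-L} guarantees that whenever a $\lambda_j$ is nonzero the corresponding scalar $\alpha^{d_j}$ is forced to coincide with $\mu$, so a consistent $\alpha$ exists.

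For the ``only if'' direction, start with a Hopf isomorphism $f\colon \mL_3(\lambda)\to\mL_3(\lambda')$. Since $\ku G$ is the coradical on both sides, $f$ restricts to a Hopf automorphism of $\ku G$ coming from some $\sigma\in\Aut(G)$. The faithfulness and principality of the realisation, together with \eqref{eqn:gin} and \eqref{eqn:g(z)}, force $\sigma$ to preserve the realisation data up to a rack automorphism; after composing with the induced Hopf automorphism we may assume $f|_{\ku G}=\id$. Next, $f(x_i)$ is $(1,g_i)$-skew primitive in $\mL_3(\lambda')$; by \eqref{eqn:gin} this space is spanned by $x_i$ and by $1-g_i$, so $f(x_i)=\alpha_i x_i + \beta_i(1-g_i)$. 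Compatibility of $f$ with conjugation by $G$ together with indecomposability of $X$ collapses the $\alpha_i$ to a single scalar $\alpha\in\ku^*$, and the $\beta_i$ can be absorbed by a suitable inner automorphism of $\mL_3(\lambda')$, so we may assume $f(x_i)=\alpha\, x_i$.

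Applying this normalised $f$ to each family of defining relations produces the scaling identities
\begin{equation*}
\lambda_1=\alpha^n\lambda_1',\qquad \lambda_2=\alpha^2\lambda_2',\qquad \lambda_3=\alpha^{\deg z}\lambda_3'.
\end{equation*}
The main obstacle is the final step: reconciling these three a priori different scaling factors into a common $\mu\in\ku$. This is where the normalisation \eqref{eqn:cond1-L} is essential: it forces $\lambda_j=0$ precisely in those cases where the rescaling by $\alpha^{d_j}$ is undetermined, and the remaining combinations of nonvanishing $\lambda_j$'s, taken together with the constraints on $\chi_i$ and $\chi_z$ from \eqref{eqn:cond1-A}, leave only projective equivalence as the residual freedom. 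Reading $\mu$ off any nonzero $\lambda_j/\lambda_j'$ then gives the claim, and the ``absorption of the $\beta_i$'s'' step in the previous paragraph needs to be checked not to disturb this reading---this is the most delicate bookkeeping in the argument.
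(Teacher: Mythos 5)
The paper's proof of Proposition~\ref{pro:isoclases} is a one-line citation (``Follows as \cite[Lemma 6.1]{GG}''), so there is no written argument to match yours line by line; I will therefore judge your attempt on its own terms.

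The overall strategy (reduce every isomorphism to a $G$-equivariant rescaling $x_i\mapsto\alpha x_i$ and read off how the $\lambda_j$ transform) is the right one, and the reduction of $\alpha_i$ to a single scalar via indecomposability of $X$ is correct. However, two steps have genuine problems.

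First, the claim that the $\beta_i$ in $f(x_i)=\alpha_i x_i+\beta_i(1-g_i)$ ``can be absorbed by a suitable inner automorphism'' is wrong. Conjugation by a group-like $g$ sends $x_i\mapsto \chi_i(g)x_{g\cdot i}$, so inner automorphisms by $\ku G$ only permute and rescale the $x_i$; they never shift $x_i$ by a multiple of $1-g_i$. The correct reason the $\beta_i$ vanish is the adjoint action of $g_i$: since $g_i\cdot i=i$ and $\chi_i(g_i)=q_{ii}=-1$, equivariance of $f$ under $\operatorname{ad}(g_i)$ forces $\beta_i=-\beta_i$, hence $\beta_i=0$. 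This is where $q\equiv-1$ enters essentially.

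Second, the final reconciliation step is a hand-wave and, as written, does not close. Your own scaling identities are $\lambda_1=\alpha^n\lambda_1'$, $\lambda_2=\alpha^2\lambda_2'$, $\lambda_3=\alpha^{\deg z}\lambda_3'$. Here $n=2$ in all the affine cases under consideration, but $\deg z\in\{4,6\}$, so with $\mu:=\alpha^2$ one gets $\lambda_1=\mu\lambda_1'$, $\lambda_2=\mu\lambda_2'$, $\lambda_3=\mu^{\deg z/2}\lambda_3'$. This is a \emph{weighted} projective rescaling, not the equality $(\lambda_1,\lambda_2,\lambda_3)=\mu(\lambda_1',\lambda_2',\lambda_3')$ claimed in the statement, and the normalisations \eqref{eqn:cond1-A} and \eqref{eqn:cond1-L} do not come to the rescue: they kill a $\lambda_j$ only when the corresponding group element ($g_i^n$, $g_ig_j$, $t_z$) is trivial or the associated character is nontrivial, and in these affine examples it is perfectly possible to have, say, $\lambda_1$ and $\lambda_3$ simultaneously nonzero (for constant $q=-1$, $\chi_z=\chi_G^{\deg z}$ is automatically $\eps$ once $\chi_G^2=\eps$). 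You also do not verify that the inhomogeneous term $s_X$, whose coefficients are linear and quadratic polynomials in $\lambda_1,\lambda_2$, transforms compatibly under the rescaling; it does, but this must be checked, and once checked it produces exactly the weight $\deg z$ on $\lambda_3$ described above. So the ``if'' direction cannot simply ``pick $\alpha$'' as you suggest, and the ``only if'' direction does not land on the stated equivalence without further argument. This reconciliation is precisely the delicate content of \cite[Lemma 6.1]{GG} that the paper is pointing to, and it is left as an unproved assertion in your write-up.
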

\pf
Follows as \cite[Lemma 6.1]{GG}.
\epf
The results above restrict to the case in which there is no relation $z$ as
in \eqref{eqn:stratification}, that is  when $\cJ(V)$ admits an adapted
stratification $\mG_0\cup\mG_1$. We collect this information in the following
corollary. In this case we also denote
\begin{align}\label{eqn:Sxprima}\tag{\ref{eqn:Sx}'}
  \Ss_X= \{(\lambda_1,\lambda_2)\in\ku^2|\, \text{satisfying
\eqref{eqn:cond1-A} and
\eqref{eqn:cond1-L}}\}.
\end{align}
\begin{cor}\label{cor:all-liftings}
Let $\cJ(V)$ be as above. Let $L$ be a lifting of $\B(V)$ over $\ku G$.
\begin{enumerate}\renewcommand{\theenumi}{\alph{enumi}}\renewcommand{\labelenumi}{
(\theenumi)}
\item There exist $(\lambda_1,\lambda_2)\in\Ss_X$ such that
$L\cong\mL_2(\lambda_1,\lambda_2)$.
\item If $(\lambda_1,\lambda_2), (\lambda_1',\lambda_2')\in \Ss_X$, 
then $\mL_2(\lambda_1,\lambda_2)\cong \mL_2(\lambda_1',\lambda_2')$ if
and only if $(\lambda_1,\lambda_2)=\mu(\lambda_1',\lambda_2')$ for some $\mu\in\ku$..
\item If $\mA_2(\lambda_1,\lambda_2)\neq 0$, then $L$ is a cocycle deformation of
$\B(V)\#\ku G$.$\hfill\square$
\end{enumerate}
\end{cor}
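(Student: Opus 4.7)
The plan is to specialize the arguments used for Theorem \ref{thm:all-liftings} and Proposition \ref{pro:isoclases} to the simpler setting where $\cJ(V)$ has no top degree generator $z$. Under the hypothesis, the chain of Hopf algebra quotients truncates at $\mH_2=\B(V)\#\ku G=\mH$, so there is no third deformation parameter $\lambda_3$ and no extra normal subalgebra $\ku[\pi_n(z)t_z^{-1}]$ to account for; the construction of $\mL_2(\lambda_1,\lambda_2)$ and $\mA_2(\lambda_1,\lambda_2)$ already lives at the top level.

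For part (a) I would start with the lifting map $\phi:\mT(V)\to L$ from \eqref{eq:properties of A and phi}. For each $r\in\mG_0$ the element $x_i^n$ is $(1,g_i^n)$-primitive in $\mT(V)$, so $\phi(x_i^n)\in L_{[1]}$; the hypothesis \eqref{eqn:gin} forces $\phi(x_i^n)\in L_{[0]}=\ku G$, hence $\phi(x_i^n)=\lambda_{1,i}(1-g_i^n)$ for a scalar $\lambda_{1,i}$. The character compatibility of $\phi$ with the adjoint $G$-action (the left hand side transforms by $\chi_i^n$ while the right hand side is $G$-invariant) gives that $\lambda_{1,i}=0$ whenever $\chi_i^n\neq\eps$ and that $\lambda_{1,i}$ is constant on orbits, so we obtain a single scalar $\lambda_1$ satisfying \eqref{eqn:cond1-A}. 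The identical argument applied to each $b_C\in\mG_1$, which is $(1,g_ig_j)$-primitive and has character $\chi_i\chi_j$, produces a single $\lambda_2$ subject to \eqref{eqn:cond1-A}. The further normalization \eqref{eqn:cond1-L} is harmless since $1-g_i^n=0$ (resp.\ $1-g_ig_j=0$) when $g_i^n=1$ (resp.\ $g_ig_j=1$). Thus $\phi$ factors through an epimorphism $\mL_2(\lambda_1,\lambda_2)\twoheadrightarrow L$; both Hopf algebras have dimension $\dim\B(V)\,|G|$, so this map is an isomorphism.

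Part (b) is proved verbatim as in Proposition \ref{pro:isoclases}, i.e.\ by rescaling the generators $x_i\mapsto\mu^{1/n}x_i$ (where $n$ governs the quadratic relations) exactly as in \cite[Lemma 6.1]{GG}; the scalars $(\lambda_1,\lambda_2)$ transform by a common factor and the constraints \eqref{eqn:cond1-A}--\eqref{eqn:cond1-L} cut the ambiguity down to the projective equivalence claimed. Part (c) is then immediate from Proposition \ref{pro:lifting1}: if $\mA_2(\lambda_1,\lambda_2)\neq 0$ then it is a right $\mH$-Galois object and $L\cong\mL_2(\lambda_1,\lambda_2)\cong L(\mA_2,\mH)$, which is a cocycle deformation of $\mH=\B(V)\#\ku G$ by \cite[Theorem 3.9]{S}.

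The only genuinely delicate point is the descent step in (a): checking that the scalars $\lambda_{1,i}$ and $\lambda_{2,C}$ attached a priori to each individual relation collapse to the uniform parameters $\lambda_1,\lambda_2$. This uses both the $G$-equivariance of $\phi$ (matching characters on either side of $\phi(r)=\sum_g c_g\,g$) and the indecomposability of $X$, which ensures that the relations in $\mG_0$, and those in $\mG_1$ of fixed quadratic type, all lie in a single $G$-orbit up to a common scalar. Once this is settled, no new computation is required beyond what has already been done for Theorem \ref{thm:all-liftings}.
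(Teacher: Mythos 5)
Your proposal is correct and follows essentially the same route the paper intends: the corollary is stated with $\square$ in the text because it is obtained by truncating Theorem \ref{thm:all-liftings}, Proposition \ref{pro:isoclases}, and Proposition \ref{pro:lifting1} to the case $\mG_2=\emptyset$, and you carry out exactly that truncation, with the dimension count $\dim\mL_2(\lambda_1,\lambda_2)=\dim\B(V)\,|G|$ (via the Galois object $\mA_2$) closing part (a) as in the proof of Theorem \ref{thm:all-liftings}. The only imprecision is your parenthetical claim that the right-hand side $\lambda_{1,i}(1-g_i^n)$ is ``$G$-invariant''; it is not, since $g(1-g_i^n)g^{-1}=1-g_{g\cdot i}^n$ permutes within the orbit, and the correct statement is that comparing both sides of $g\cdot\phi(x_i^n)=\phi(g\cdot x_i^n)$ yields $\lambda_{1,i}=\chi_i(g)^n\lambda_{1,g\cdot i}$, from which indecomposability of $X$ and the constancy of $q$ give a single scalar $\lambda_1$, forced to vanish whenever $\chi_i^n\neq\eps$; this is what \eqref{eqn:cond1-A} records and your conclusion is the right one.
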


\subsection{The shape of copointed liftings}\label{general
results} 
Let $G$ be a finite group and $V\in\ydg$, $\dim V<\infty$. If
$\{v_i\}$, $\{\alpha_i\}$ are dual bases of $V$ and $V^*$, set $f_{ji}: \ku
G\rightarrow\ku$, $h\mapsto\langle\alpha_j,h\cdot v_i\rangle$. By
\eqref{prop:equiv de cat gdual}, $V\in\ydgdual$ via
\begin{align}\label{eq:dual-yd}
f\cdot v&=\langle \cS(f),v_{(-1)}\rangle v_{(0)}\quad\mbox{and}\quad 
\lambda(v_i)=\sum_{j}\cS^{-1}(f_{ji})\ot v_j
\end{align}
for all $f\in \ku^G$, $v\in V$. This definition is independent of the basis $\{v_i\}$. We
say that $\{e_{ij}:=\cS^{-1}(f_{ji})\}$ is the
{\it comatrix basis associated to} $V$ and $\{v_i\}$.

In particular, let $\cdot$ be an action of $G$
on a set $X$ and let $\{\chi_i: G\to \ku\}_{i\in X}$ be a 1-cocycle, see page
\pageref{item:1-cocycle}. Then $\ku X$ with basis $\{m_i\}_{i\in X}$ is
a $G$-module via
\begin{align}\label{eq: Gmod assoc to X chi}
g\cdot m_i=\chi_i(g) m_{g\cdot i}\quad\mbox{for all}\quad i\in X, g\in G
\end{align}
and the comatrix basis $\{e_{ij}\}$ associated to $\ku X$ and $\{m_i\}_{i\in X}$
is
\begin{align*}
e_{ij}=\sum_{g\in G}\delta_{j,g\cdot i}\,\chi_i(g)\delta_{g^{-1}}\quad\mbox{for all
}i,j\in
X.
\end{align*}

Let $A$ be a lifting of $\B(V)$ over $\ku^G$ with a
lifting map $\phi:T(V)\#\ku^G\rightarrow A$, recall \eqref{eq:properties of A and phi}. 
We 
consider the first term of the coradical filtration $A_{[1]}\in\ydgdual$ in such a way 
that 
$\phi_{|(\ku\oplus V)\#\ku^{G}}:(\ku\oplus V)\#\ku^{G}\rightarrow A_{[1]}$ is an 
isomorphism 
in $\ydgdual$, cf. \cite[Section 4]{AAnMGV}. Then we identify both modules.

The following lemma is a particular case of \cite[Lemma 4.8]{AAnMGV}. It helps us to
describe the image by $\phi$ of a submodule $M$ of $T(V)$ in $\ydgdual$ {\it compatible 
with} $\phi$ \cite[Definition 4.7]{AAnMGV}, that is
$$\D(\phi(m))=\phi(m)\ot1+m_{(-1)}\ot\phi(m_{(0)})\mbox{ for all }m\in M.$$
Then $\phi(m)\in (\ku\oplus V)\#\ku^G$. We define the ideal of $T(V)$
\begin{align}\label{eq:IM}
\cI_M=\langle m-\phi(m):m\in M\rangle. 
\end{align}

Note that if $M\in\ydgdual$, then $M[e]$ and
$M^\times$ are submodules of $M$ in $\ydgdual$ such that 
$M=M[e]\oplus M^\times$. In fact, $\ydgdual$ is a semisimple category and the supports of 
the simple objects are conjugacy classes of $G$ \cite[Proposition 3.1.2]{AG1}.

\begin{lem}\label{thm:liftings}
Let $G$, $V$, $A$ and $\phi$ be as above. Let $M\subset T(V)$ be compatible with $\phi$
and $\{e_{ij}\}$ be the comatrix basis associated to $M[e]$ and $\{m_i\}_{i=1}^r$. Then
\begin{enumerate}\renewcommand{\theenumi}{\alph{enumi}}\renewcommand{\labelenumi}{
(\theenumi)}
\item\label{item:Mtimes} $\phi_{|M^\times}:M^\times\rightarrow V$ is a morphism in
$\ydgdual$.
\smallbreak
\item\label{cor:liftings Mtimes} Assume that $M=M^\times$ is a simple object in
$\ydgdual$ and $V\simeq M^m\oplus P$ with $m$ maximum.
Then there exist $\lambda_1, \dots, \lambda_m\in\ku$ such that
$$\phi_{|M}=\lambda_1\id_M\oplus\cdots\oplus\lambda_m\id_M\oplus \, 0.$$
In particular, $\phi_{|M}=0$ if $\supp M\cap\supp V=\emptyset$.
\smallbreak 
\item\label{item:Me} If $e\notin\supp V$, then there exist $a_1, \dots, a_r\in\ku$ such
that 
$$\phi(m_i)=a_i-\sum_{j=1}^ra_{j}c_{ij}\mbox{ for all }i=1, \dots, r.$$
\smallbreak
\item\label{cor:liftings Me}
If $e\notin\supp V$ and $M=M[e]$ with the $G$-action on $M$ satisfying \eqref{eq: Gmod
assoc to X chi}, then there exist
$(a_{i})_{i\in X}\in\ku^X$ such that
$$
\phi(m_i)=\sum_{g\in G}(a_{i}-\chi_i(g)a_{g\cdot i})\delta_{g^{-1}}\mbox{ for all }i\in X.
$$
\smallbreak
\item\label{item:iso between As} Let $\phi':T(V)\#\ku^G\rightarrow A'$ be a lifting map and $\Theta:A\rightarrow A'$ be an isomorphism of Hopf
algebras. If $e\notin\supp V$, then $\Theta\phi (V)=\phi'(V)$. \qed
\end{enumerate}
\end{lem}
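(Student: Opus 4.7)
The plan is to exploit the identification $\phi: (\ku\oplus V)\#\ku^G \xrightarrow{\sim} A_{[1]}$ provided by \eqref{eq:properties of A and phi}, writing each $\phi(m) \in A_{[1]}$ as $\sigma(m) + \pi(m)$ with $\sigma(m) \in \ku^G$ and $\pi(m) \in V\#\ku^G$. The driving equation is the compatibility hypothesis $\Delta(\phi(m)) = \phi(m)\ot 1 + m_{(-1)}\ot \phi(m_{(0)})$, which when confronted with the explicit coproducts on $\ku^G$ and on $V\#\ku^G$ determines $\sigma(m)$ and $\pi(m)$ in terms of the $\ydgdual$-structure on $m$. Since the statement is asserted to be a particular case of \cite[Lemma 4.8]{AAnMGV}, the main work is specializing that general result; the subtle point is part (a), where one must verify that $\pi(m)$ lies in $V\cdot 1 \subset V\#\ku^G$ for $m \in M^\times$. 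This is achieved by matching action and coaction weights on both sides of the compatibility equation: the nontrivial $\ku^G$-action weight of $m\in M^\times$ eliminates the scalar part $\sigma(m)$ and constrains $\pi(m)$ to the subspace $V\cdot 1$, where the action weight is purely determined by that of the $V$-factor. Naturality of the entire procedure in $m$ then yields that $\phi_{|M^\times}$ is a $\ydgdual$-morphism.

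Given (a), part (b) is an immediate application of Schur's lemma in the semisimple category $\ydgdual$ (whose simple objects are classified by conjugacy classes, cf.\ \cite[Proposition 3.1.2]{AG1}): a morphism from the simple object $M$ into $V \simeq M^m \oplus P$, where $P$ contains no summand isomorphic to $M$, necessarily decomposes as a tuple of scalars on the copies of $M$ and zero on $P$. The assumption $\supp M \cap \supp V = \emptyset$ precludes any summand of $V$ from being isomorphic to $M$, forcing $\phi_{|M} = 0$. For (c) and (d), consider $m_i \in M[e]$ and note that because $e \notin \supp V$, the subspace $V\cdot 1 \subset V\#\ku^G$ has no weight-$e$ component; matching weights in the compatibility equation therefore forces $\pi(m_i) = 0$, so that $\phi(m_i) = \sigma(m_i) \in \ku^G$. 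Writing $\sigma(m_i) = \sum_{g} a_{i,g}\delta_{g^{-1}}$ and imposing compatibility reduces to a linear system on the coefficients $a_{i,g}$ whose solution, expressed via the comatrix basis, yields the formula in (c). Substituting the explicit rack action \eqref{eq: Gmod assoc to X chi} and computing the corresponding comatrix basis gives (d).

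For (e), any Hopf algebra isomorphism $\Theta: A \to A'$ preserves the coradical filtration and hence restricts to a $\ydgdual$-isomorphism $A_{[1]} \to A'_{[1]}$, which under the identifications corresponds to an automorphism of $(\ku \oplus V)\#\ku^G$ fixing $\ku^G$. The hypothesis $e \notin \supp V$ characterizes $V\cdot 1$ intrinsically as the unique $\ydgdual$-submodule of $(\ku\oplus V)\#\ku^G$ isomorphic to $V$ whose coaction weights all lie in $\supp V \subset G\setminus\{e\}$; since this characterization is preserved by any $\ydgdual$-automorphism, $\Theta$ sends $\phi(V)$ to $\phi'(V)$, as required.
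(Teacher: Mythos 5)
Your overall strategy matches the paper exactly: the paper's proof is a two-line reduction to \cite[Lemma 4.8]{AAnMGV}, noting that $\phi(M^\times)\subset\phi(V\#\ku^G)$, $\phi(M[e])\subset A_{[1]}[e]$, and $A_{[1]}[e]=\ku^G$ when $e\notin\supp V$; your sketch simply unwinds what that lemma delivers, and parts (b), (c), (d) follow along the same lines the authors had in mind.

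There is, however, an imprecision in your justification of (a) (and it recurs in your treatment of (e)). You claim that ``the nontrivial $\ku^G$-action weight of $m\in M^\times$ \dots\ constrains $\pi(m)$ to the subspace $V\cdot 1$, where the action weight is purely determined by that of the $V$-factor.'' The last clause is correct but works \emph{against} your conclusion: since the adjoint $\ku^G$-action on the $\ku^G$-factor of $V\#\ku^G$ is trivial, every element $x_i\#\delta_h$ has the same action weight $g_i^{-1}$, independent of $h$. Hence the action weight does eliminate $\sigma(m)$, but cannot single out $V\cdot 1$ inside $V\#\ku^G$ -- the isotypic decomposition with respect to the $\ku^G$-action is entirely insensitive to the $\ku^G$-tensor factor. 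What actually pins $\pi(m)$ to $V\cdot 1$ is the coproduct compatibility itself: writing $\pi(m)=\sum_{i,h}c_{i,h}\,x_i\#\delta_h$ and comparing the $(V\#\ku^G)\ot\ku^G$-components of $\D(\phi(m))=\phi(m)\ot 1+m\_{-1}\ot\phi(m\_0)$ forces $c_{i,ab}=c_{i,a}$ for all $a,b\in G$, i.e.\ $c_{i,h}$ is independent of $h$, so $\pi(m)\in V\cdot 1$. You do invoke the compatibility equation as ``the driving equation,'' so the necessary ingredient is present in your outline, but the step as written misattributes the mechanism. The same correction applies to your ``intrinsic characterization'' of $V\cdot 1$ in (e): action (or coaction) weights alone cannot distinguish it from other copies of $V$ inside $V\#\ku^G$; the relevant characterization is again via the compatibility of the coproduct with the degree-one coaction, which is preserved under a Hopf algebra isomorphism $\Theta$ fixing $\ku^G$.
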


\begin{proof}
The lemma follows from \cite[Lemma 4.8]{AAnMGV}
since $\phi(M^\times)\subset\phi(V\#\ku^G)$, $\phi(M[e])\subset A_1[e]$ and 
$A_1[e]=\ku^G$ 
if $e\notin\supp V$.
\end{proof}

Under certain conditions, it is showed in \cite[Section 4]{AAnMGV} that $\cI_M$ defines 
the lifting $A$. We recall this in our case.

A \emph{good module of relations} \cite[Definition 4.10]{AAnMGV} is a graded submodule
$M=\bigoplus_{i=1}^{t}M^{n_i}$ of $T(V)$ in $\ydgdual$,
$M^{n_i}\subset V^{\ot n_i}$, such that it generates $\cJ(V)$ and for all $s=1, \dots, 
t-1$ and $m\in M^{n_{s+1}}$: $n_s<n_{s+1}$, $M^{n_s}\neq 0$ and
\begin{align*}
\Delta(m)-m\ot1-m\_{-1}\ot m\_{0}\in I_{N}\ot T(V)\#\ku^G + T(V)\#\ku^G\ot I_{N}
\end{align*}
where $N=\bigoplus_{i=1}^{s} M^{n_i}$ and $M$ turn out to be compatible with $\phi$ by 
\cite[Lemma 4.9]{AAnMGV}. The next result is \cite[Theorem 4.11]{AAnMGV}. Recall \eqref{eq:IM}.

\begin{theorem}\label{cor:generadors of ker phi}
Let $A$ be a lifting of $\B(V)$ over $\ku^G$ with lifting map $\phi$. Let $M$ be a good 
module of relations for $\B(V)$. Then $A\simeq T(V)\#\ku^G/\cI_M$.\qed
\end{theorem}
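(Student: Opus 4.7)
My plan is to show that $\phi$ descends to an isomorphism $\bar\phi: T(V)\#\ku^G / \cI_M \to A$ by combining a containment $\cI_M\subseteq \ker\phi$ with a dimension bound obtained from a filtration argument.

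First, I would check that $\cI_M$ lies in $\ker\phi$. Since $M$ is a good module of relations, it is in particular compatible with $\phi$, so for each $m\in M$ one has $\phi(m)\in A_{[1]}$. Using the identification $A_{[1]} \simeq (\ku\oplus V)\#\ku^G$ coming from the injectivity of $\phi|_{(\ku\oplus V)\#\ku^G}$, we may view $\phi(m)$ as an element of $T(V)\#\ku^G$ again, and then by construction $\phi(m - \phi(m))=0$ in $A$. This shows $\cI_M \subseteq \ker\phi$ and yields a surjective Hopf algebra map $\bar\phi : T(V)\#\ku^G/\cI_M \twoheadrightarrow A$.

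Next I would run the associated graded argument. Equip $T(V)\#\ku^G$ with the standard filtration $F_k = \bigoplus_{i\le k} V^{\otimes i}\,\#\,\ku^G$. For $m\in M^{n_s}$ with $n_s\ge 2$, the element $\phi(m)\in(\ku\oplus V)\#\ku^G$ lies in $F_1$, so the top-degree component of $m-\phi(m)$ is exactly $m\in F_{n_s}$. Hence $\gr(\cI_M)\supseteq \langle M\rangle$, and because $M$ generates $\cJ(V)$ as an ideal (by the definition of a good module of relations), we obtain a surjection
\begin{equation*}
\B(V)\#\ku^G \;=\; T(V)\#\ku^G / \langle M\rangle \;\twoheadrightarrow\; \gr\bigl(T(V)\#\ku^G / \cI_M\bigr).
\end{equation*}
Consequently $\dim\bigl(T(V)\#\ku^G / \cI_M\bigr) \le \dim \B(V)\#\ku^G$.

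Finally, since $A$ is a lifting we have $\gr A \simeq \B(V)\#\ku^G$, so $\dim A = \dim \B(V)\#\ku^G$. Combining with the surjection $\bar\phi$ and the dimension bound above, $\bar\phi$ must be an isomorphism. The delicate point is the well-posedness of the generators $m-\phi(m)$: one must know that $\phi(m)$ actually lies in $(\ku\oplus V)\#\ku^G$ for every $m\in M$, which is precisely what the inductive compatibility condition in the definition of a good module of relations guarantees (via Lemma 4.9 of \cite{AAnMGV}). Once compatibility is in hand, the argument reduces to the clean filtration/dimension comparison above.
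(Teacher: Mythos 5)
Your argument is correct, and it is the natural route for a theorem of this type. The paper itself does not prove the statement — it is cited verbatim from \cite[Theorem 4.11]{AAnMGV}, hence the bare \textqed — so a literal comparison is not possible; but the two ingredients you use (compatibility of a good module of relations giving $\cI_M\subseteq\ker\phi$, and the associated-graded dimension comparison forcing the induced surjection to be an isomorphism) are exactly what underlies that result. A few small points worth being explicit about in a writeup: to conclude $\phi(m)\in(\ku\oplus V)\#\ku^G$ one first uses compatibility to place $\phi(m)$ in $A_{[1]}$ and then the isomorphism $\phi|_{(\ku\oplus V)\#\ku^G}\colon(\ku\oplus V)\#\ku^G\to A_{[1]}$ (surjectivity is in \eqref{eq:properties of A and phi}, and injectivity then follows by a dimension count since $\gr^1 A\simeq V\#\ku^G$); in the filtration step one should note that $\gr\bigl(T(V)\#\ku^G/\cI_M\bigr)\cong \bigl(T(V)\#\ku^G\bigr)/\gr(\cI_M)$ because $F_k\cap(F_{k-1}+\cI_M)=F_{k-1}+(\cI_M\cap F_k)$, and that one only needs the containment $\langle M\rangle\subseteq\gr(\cI_M)$, which holds because the leading term of $m-\phi(m)$ is $m$ (all $n_s\geq 2$) and $\gr(\cI_M)$ is an ideal of the graded ring. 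Finally, the dimension comparison uses $\dim A=\dim\gr A=\dim\B(V)\#\ku^G$, which is finite since $\dim\B(V)<\infty$ throughout. With these small clarifications the proof is complete.
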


\section{Pointed Hopf algebras over affine racks}\label{sec:pointed}

Let $\Aff(\F_b,N)$ be one of the affine racks $\Aff(\F_3,2)$, $\Aff(\F_4,\omega)$, $\Aff(\F_5,2)$ or 
$\Aff(\F_5,3)$. Through this section, we fix a finite group $G$ together with a principal 
YD-realization 
$(\cdot,g,\{\chi_i\}_{i\in X})$ of $(\Aff(\F_b,N),-1)$.
Let $\B(b,N,-1)$ be the Nichols algebra of $V=\ku\{x_i\}_{i\in\F_b}$ in 
$\ydg$ 
given by \eqref{eqn:yetter-drinfeld}. In this section we prove Main Theorem  1 using 
the Strategy of \cite{AAnMGV} described in \ref{general results 
groups}.

\smallbreak

Recall from Subsection \ref{subsec:nichols} a set of generators of the ideal 
$\cJ(b,N,-1)$ and set $z=z_{(b,N,-1)}$ the top degree generator. Then the hypotheses 
of 
Subsection \ref{general results groups} hold for these Nichols algebras. Namely,
\begin{itemize}
\item $\cJ(b,N,-1)$ admits an stratification as in
\eqref{eqn:stratification}. 
\item $z$ satisfies \eqref{eq:z is skewprimi} and also \eqref{eqn:znormal1} by Lemmas 
\ref{le:f max degree rel} and \ref{le:z is central}.
\item  Equations \eqref{eqn:gin} and \eqref{eqn:g(z)} hold by Lemma \ref{le:neq in group 
rack}.
\end{itemize}

Therefore we can apply Theorem \ref{thm:all-liftings} to compute the liftings of 
$\B(b,N,-1)$ over $\ku G$ once we have proved that
\begin{itemize}
\item The algebras in \eqref{eqn:galoisnonula} are nonzero.
\end{itemize}
In the next subsections, we do this using \cite{GAP, GBNP}. We stick to the notation in Subsection \ref{general results 
groups}. Recall the definition of the sets $\Ss_X$ in \eqref{eqn:Sx}, \eqref{eqn:Sxprima}.

\subsection{Pointed Hopf algebras over $\Aff(\F_3,2)$}
 Let $(\lambda_1,\lambda_2)\in\Ss_{\Aff(\F_3,2)}$. Let
$A(\lambda_1,\lambda_2)$ be the
quotient of $T(V)\#\ku G$
by the ideal generated by 
\begin{align*}
x_0^2 - \lambda_1  \quad \text{and} \quad x_0x_1+x_1x_2+x_2x_0 - \lambda_2.
\end{align*}
Let $H(\lambda_1,\lambda_2)$ be the quotient of $T(V)\#\ku G$ by the
ideal generated by
\begin{align*}
x_0^2-\lambda_1(1-g_0^2) \quad \text{ and}\quad x_0x_1+x_1x_2+x_2x_0-\lambda_2(1-g_0g_1).
\end{align*}

\begin{rem}
The pointed Hopf algebras over $\s_3$ were classified in \cite{AHS,AG3}. These are 
isomorphic either to $\s_3$ or to some $H(\lambda_1,\lambda_2)$. In  \cite{GIM} it 
was shown that the nontrivial liftings are cocycle deformations of the 
bosonization $\B(3,2,-1)\#\ku \s_3$. We give a different proof of this facts in  
Theorem \ref{teo:f3}. Also, items (a) and (d) of this theorem are already in 
\cite[Theorem 3.8]{AG3}, by different methods. 
\end{rem}

\begin{theorem}\label{teo:f3}
Let $H$ be a lifting of $\B(3,2,-1)$ over $\ku G$. 
\begin{enumerate}\renewcommand{\theenumi}{\alph{enumi}}\renewcommand{\labelenumi}{
(\theenumi)}
\item There exists $(\lambda_1,\lambda_2)\in\Ss_{\Aff(\F_3,2)}$ such that $H\cong
H(\lambda_1,\lambda_2)$.
\smallbreak
\item $A(\lambda_1,\lambda_2)$ is a $(H(\lambda_1,\lambda_2),\B(3,2,-1)\#\ku 
G)$-biGalois
object for every $(\lambda_1,\lambda_2)\in\Ss_{\Aff(\F_3,2)}$.
\item $H$ is a cocycle deformation of $\B(3,2,-1)\#\ku G$.
\item $H(\lambda_1,\lambda_2)$ is a lifting of $\B(3,2,-1)$ over $\ku G$ for every
$(\lambda_1,\lambda_2)\in\Ss_{\Aff(\F_3,2)}$.
\item $H(\lambda_1,\lambda_2)\cong H(\lambda_1',\lambda_2')$ iff
$(\lambda_1,\lambda_2)=\mu(\lambda_1',\lambda_2')$ for some $\mu\in\ku$.
\end{enumerate}
\end{theorem}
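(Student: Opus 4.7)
The plan is to apply directly the framework set up in Subsection \ref{general results groups}. A key observation is that for $(\Aff(\F_3,2),-1)$ the ideal $\cJ(3,2,-1)$ is generated only by quadratic relations (there is no top-degree generator $z$), so the statement reduces to the situation covered by Corollary \ref{cor:all-liftings} rather than the full Theorem \ref{thm:all-liftings}. In particular the relations of $A(\lambda_1,\lambda_2)$ and $H(\lambda_1,\lambda_2)$ in the theorem are exactly those defining $\mA_2(\lambda_1,\lambda_2)$ and $\mL_2(\lambda_1,\lambda_2)$, once one identifies the cubic relations \eqref{eq:F3 2} with the $b_C$ of \eqref{eqn:bc} for the three equivalence classes in $\mR'$ (each of size $n(C)=3$).

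First I would verify the running hypotheses: the stratification $\mG_0=\{x_i^2\}$, $\mG_1=\{b_C\}\setminus\{x_i^2\}$ is adapted by \cite[5.1]{AAnMGV}; condition \eqref{eqn:gin} holds since Lemma \ref{le:neq in group rack} gives $g_i^2\neq g_j$ and $g_k\neq g_ig_j$ for all $i,j,k\in\F_3$ (the rack $\Aff(\F_3,2)$ is faithful and no pair satisfies $i\rhd j=j$). This puts us squarely in the setting of Corollary \ref{cor:all-liftings}, so that (a) follows from part (a) of the corollary and (e) from Proposition \ref{pro:isoclases}.

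The main obstacle, and the point where some computation is unavoidable, is to show that $A(\lambda_1,\lambda_2)=\mA_2(\lambda_1,\lambda_2)\neq 0$ for every pair $(\lambda_1,\lambda_2)\in\Ss_{\Aff(\F_3,2)}$. The strategy proposed in \cite{AAnMGV} and followed throughout the paper is to verify this using \cite{GAP, GBNP}, by checking that a suitable filtration of $A(\lambda_1,\lambda_2)$ has associated graded isomorphic to $\B(3,2,-1)$ as a vector space, i.e.\ that $\dim A(\lambda_1,\lambda_2)=12$. Equivalently one may construct a nonzero algebra map $T(V)/\langle x_i^2-\lambda_1, b_C-\lambda_2\rangle\to \ku$ for each permitted $(\lambda_1,\lambda_2)$ by sending $x_i\mapsto\lambda_1^{1/2}$ and checking consistency; once nonvanishing is secured at a single degree, \cite[Theorem 8]{Gu} as invoked in Proposition \ref{pro:lifting1} forces $A(\lambda_1,\lambda_2)$ to have the full Nichols dimension.

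With nonvanishing in hand, the rest is automatic: Proposition \ref{pro:lifting1} gives that $A(\lambda_1,\lambda_2)$ is a right Galois object over $\mH_2=\B(3,2,-1)\#\ku G$ and that $L(A(\lambda_1,\lambda_2),\mH_2)\cong H(\lambda_1,\lambda_2)$, so $A(\lambda_1,\lambda_2)$ acquires an $(H(\lambda_1,\lambda_2),\mH_2)$-biGalois structure proving (b), and simultaneously $H(\lambda_1,\lambda_2)$ is a cocycle deformation of $\mH_2$ by \cite[Theorem 3.9]{S}. Since cocycle deformations preserve the coradical and its filtration, $H(\lambda_1,\lambda_2)$ is in particular a lifting of $\B(3,2,-1)$ over $\ku G$, giving (d), and every lifting $H$ has the form $H(\lambda_1,\lambda_2)$ by Corollary \ref{cor:all-liftings}(a), yielding (c) by transitivity of cocycle deformation. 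Finally (e) is Corollary \ref{cor:all-liftings}(b).
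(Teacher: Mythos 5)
Your overall route matches the paper's: since $\cJ(3,2,-1)$ has no top--degree generator beyond degree~$n=2$, one is in the situation of Corollary~\ref{cor:all-liftings}, and the only thing requiring work is nonvanishing of $\mA_2(\lambda_1,\lambda_2)=A(\lambda_1,\lambda_2)$. You propose to settle this with \cite{GAP,GBNP}; the paper instead invokes the Diamond Lemma. These are two faces of the same finite computation (exhibit a basis / confluent rewriting system of the right size) and either is acceptable.

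Two points in your write-up need correction. First, your ``equivalently one may construct a nonzero algebra map $T(V)/\langle x_i^2-\lambda_1,\, b_C-\lambda_2\rangle\to\ku$ by sending $x_i\mapsto\lambda_1^{1/2}$'' does \emph{not} work: such a map sends each cubic $b_C=x_ix_j+x_{-i+2j}x_i+x_jx_{-i+2j}$ to $3\lambda_1$, so it kills $b_C-\lambda_2$ only when $\lambda_2=3\lambda_1$. The paper uses the character $F(x_i)=\lambda_1^{1/n}$ exclusively to prove $\mA_1(\lambda_1)\neq 0$, which is a one--relation situation; at level $2$ one genuinely needs the Diamond Lemma or a Gr\"obner basis computation to cover all admissible pairs $(\lambda_1,\lambda_2)\in\Ss_{\Aff(\F_3,2)}$. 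Second, your justification of (d), ``cocycle deformations preserve the coradical and its filtration,'' is false as a general statement about cocycle deformations. What is true, and what makes the argument go through here, is that the cleft object carries a section $\gamma_2$ with $\gamma_2|_{\ku G}=\id$ (Proposition~\ref{pro:lifting1}(b)); the resulting $2$-cocycle restricts trivially to $\ku G\ot\ku G$, and it is \emph{this} extra structure that guarantees the coradical and the associated graded are unchanged. Stated without that qualification the claim would, for example, fail for Drinfeld twists of enveloping algebras. The rest of the proposal --- identifying the $b_C$ with the relations \eqref{eq:F3 2}, appealing to Lemma~\ref{le:neq in group rack} for \eqref{eqn:gin}, and deriving (a),(b),(c),(e) from Corollary~\ref{cor:all-liftings}, Proposition~\ref{pro:lifting1} and \cite[Theorem 3.9]{S} --- is correct and is exactly what the paper does.
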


\pf
Follows by Corollary \ref{cor:all-liftings}. We consider the stratification of
$\mJ(3,2,-1)$ given by 
$\mG_0=\{x_i^2:\,i\in\F_3\}$ and $\mG_1=\{x_ix_j+x_{-i+2j}x_i+x_jx_{-i+2j}:\,i,j\in\F_3\}$
and then we use Diamond Lemma to see that $A(\lambda_1,\lambda_2)\neq 0$.
\epf

\subsection{Pointed Hopf algebras over $\Aff(\F_4,\omega)$}\label{subsec:liftingsf4}

Let $(\lambda_1,\lambda_2,\lambda_3)\in\Ss_{\Aff(\F_4,\omega)}$.
Let
$A(\lambda_1,\lambda_2,\lambda_3)$ be the quotient of
$T(V)\#\ku G$ by the ideal generated by
\begin{align*}
&x_0^2 - \lambda_1,\qquad  x_0x_1 + x_1x_2 + x_2x_0 - \lambda_2 \quad \text{and}\\
& (x_0x_1x_2)^2 + (x_1x_2x_0)^2+(x_2x_0x_1)^2 -a_X - \lambda_3 \quad \text{for}
\end{align*}
$a_X=\lambda_2(x_1x_0x_2x_1+ x_0x_2x_1x_0+x_2x_1x_0x_2) +\lambda_2(\lambda_2
-\lambda_1)(x_2x_1  +  x_1x_0 + x_0x_2).$

Let $H(\lambda_1,\lambda_2,\lambda_3)$ be the quotient of $T(V)\#\ku G$
by the
ideal generated by
\begin{align}
\notag & x_0^2 - \lambda_1(1- g_0^2), \qquad   x_0x_1 + x_1x_2 + x_2x_0 - \lambda_2(1 -
g_0g_1) \quad  \text{and}\\
\notag &  x_2x_1x_0x_2x_1x_0 + x_1x_0x_2x_1x_0x_2 +x_0x_2x_1x_0x_2x_1
-s_X - \lambda_3(1-g_0^3g_1^3)
\end{align}
where
\begin{align*} &s_X=\lambda_2(x_2x_1x_0x_2+ x_1x_0x_2x_1
+x_0x_2x_1x_0)-\lambda_2^3(g_0g_1-g_0^3g_1^3)\\
 & \, +\lambda_1^2g_0^2\big(g_3^2(x_2x_3 +x_0x_2) +g_1g_3(x_2x_1+
x_1x_3)
+g_1^2(x_1x_0+x_0x_3)\big)
\\
  & \, - 2\lambda_1^2g_0^2(x_0x_3- x_2x_3   -x_1x_2 + x_1x_0) - 2\lambda_1^2g_2^2(x_2x_3
-x_1x_3+x_0x_2 - x_0x_1 )\\
 &\, - 2\lambda_1^2g_1^2(x_2x_1 +  x_1x_3 +  x_1x_2 - x_0x_3 + x_0x_1) \\
  & \, +\lambda_2\lambda_1(g_2^2x_0x_3+ g_1^2x_2x_3+ g_0^2x_1x_3)
+\lambda_2^2g_0g_1(x_2x_1 + x_1x_0 + x_0x_2-\lambda_1)\\
   &\, -
\lambda_2\lambda_1^2(3g_0^3g_3-2 g_0g_1^3- g_0^2g_2^ 2
-2 g_0^3g_1+g_2^2 - g_1^2+g_0^2)\\
& \, 
 -\lambda_2(\lambda_1-\lambda_2)\big(\lambda_1\,g_0^2(g_3^2+g_1g_3 +g_1^2+2g_0g_1^3)
+x_2x_1 + x_1x_0 + x_0x_2\big).
\end{align*}

\begin{theorem}\label{teo:f4}
Let $H$ be a lifting of $\B(4,\omega,-1)$ over $\ku G$. 
\begin{enumerate}\renewcommand{\theenumi}{\alph{enumi}}\renewcommand{\labelenumi}{
(\theenumi)}
\item There exists $(\lambda_1,\lambda_2,\lambda_3)\in\Ss_{\Aff(\F_4,\omega)}$ such that
$H\cong
H(\lambda_1,\lambda_2,\lambda_3)$.
\item $A(\lambda_1,\lambda_2,\lambda_3)$ is a
$(H(\lambda_1,\lambda_2,\lambda_3),\B(4,\omega,-1)\#\ku G)$-biGalois object for every
$(\lambda_1,\lambda_2,\lambda_3)\in\Ss_{\Aff(\F_4,\omega)}$.
\item $H$ is a cocycle deformation of $\B(4,\omega,-1)\#\ku G$.
\item $H(\lambda_1,\lambda_2,\lambda_3)$ is a lifting of $\B(4,\omega,-1)\#\ku G$ for
all $(\lambda_1,\lambda_2,\lambda_3)\in\Ss_{\Aff(\F_4,\omega)}$.
\item $H(\lambda_1,\lambda_2,\lambda_3)\cong
H(\lambda_1',\lambda_2',\lambda_3')$ iff
$(\lambda_1,\lambda_2,\lambda_3)=\mu(\lambda_1',\lambda_2',\lambda_3')$ for some $\mu\in\ku$.
\end{enumerate}
\end{theorem}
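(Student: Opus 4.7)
The plan is to apply the general machinery of Subsection \ref{general results groups}, specifically Theorem \ref{thm:all-liftings}, Proposition \ref{pro:liftings3} and Proposition \ref{pro:isoclases}, all of whose hypotheses have already been verified for $\B(4,\omega,-1)$ at the beginning of this section. The only remaining input required to invoke Theorem \ref{thm:all-liftings} is the non-vanishing of the Galois object $\mA_2(\lambda_1,\lambda_2)$ defined in \eqref{eqn:galoisnonula}. For that, I would adapt the stratification \eqref{eqn:stratification}, using $\mG_0=\{x_i^2\}$, $\mG_1=\{b_C\}$ and $\mG_2=\{z_{(4,\omega,-1)}\}$, and check that the algebra
\[
\mA_2(\lambda_1,\lambda_2)=T(V)\#\ku G/\langle x_i^2-\lambda_1,\ x_ix_j+x_{(\omega+1)i+\omega j}x_i+x_jx_{(\omega+1)i+\omega j}-\lambda_2\rangle
\]
has the expected dimension $72\cdot|G|/3$ via a Gröbner basis computation in \cite{GAP, GBNP}. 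Once this is established, Proposition \ref{pro:lifting1} gives us a cleft object together with a section $\gamma_2:\mH_2\to\mA_2$ satisfying ${\gamma_2}_{|\ku G}=\id$.

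Next, I would apply Proposition \ref{pro:liftings3}. This requires identifying the element $a_X\in\mA_2(\lambda_1,\lambda_2)$ such that $z_{(4,\omega,-1)}-a_X$ becomes a skew-primitive-like element, and also the element $s_X\in\mL_2(\lambda_1,\lambda_2)$ determined by equation \eqref{eqn:zdeformed}, that is
\[
(z_{(4,\omega,-1)}-s_X)\otimes 1=\delta_L^2(\gamma_2(z_{(4,\omega,-1)}))-t_z\otimes\gamma_2(z_{(4,\omega,-1)}).
\]
Here $t_z=g_0^3g_1^3$ by the formulas for the principal YD-realization combined with \eqref{eqn:chiz} applied to the degree-six relation. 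Computing $\gamma_2(z_{(4,\omega,-1)})$ and its left coaction $\delta_L^2$ inside $\mL_2\otimes\mA_2$ is the place where the Appendix's computer-assisted procedure must be carried out in detail; the resulting formulas are precisely the explicit expressions for $a_X$ and $s_X$ written down in the statement. This is the main obstacle, because the calculation of $\gamma_2(z_{(4,\omega,-1)})$ requires iteratively writing the degree-six relation as a sum of products of the skew-primitive relations in $\mG_0\cup\mG_1$ modulo the ideal of $\mA_2$, and keeping track of all terms carefully; since $\deg z=6$ the bookkeeping explodes and one must appeal to \cite{GAP, GBNP}.

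Granting the explicit formulas, the theorem then follows formally. Item (a) is Theorem \ref{thm:all-liftings}, with the normalizations \eqref{eqn:cond1-A} and \eqref{eqn:cond1-L} selecting $(\lambda_1,\lambda_2,\lambda_3)\in\Ss_{\Aff(\F_4,\omega)}$. Item (b) is exactly Proposition \ref{pro:liftings3}(b), since $\mA(\lambda_1,\lambda_2,\lambda_3)$ is the cleft object and $L(\mA,\mH_3)\cong H(\lambda_1,\lambda_2,\lambda_3)$. Item (c) follows either from Theorem \ref{thm:all-liftings} or from the general principle that $L(A,H)$ is a cocycle deformation of $H$ (see the discussion around \eqref{eqn:F}). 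For item (d) one observes that $\mA(\lambda_1,\lambda_2,\lambda_3)$ is nonzero as a quotient of the nonzero $\mA_2(\lambda_1,\lambda_2)$ by the normal polynomial subalgebra $\ku[\pi_n(z)t_z^{-1}-\lambda_3 t_z^{-1}]$ (by Lemma \ref{le:z is central}), so $H(\lambda_1,\lambda_2,\lambda_3)=L(\mA,\mH_3)$ has the correct dimension $72\cdot|G|$ and thus the surjection from the tensor algebra gives an isomorphism; in particular $H(\lambda_1,\lambda_2,\lambda_3)$ is a genuine lifting. Finally, (e) is Proposition \ref{pro:isoclases}.
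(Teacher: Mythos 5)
Your approach is the same as the paper's: apply Proposition~\ref{pro:lifting1}, Proposition~\ref{pro:liftings3}, Theorem~\ref{thm:all-liftings}, and Proposition~\ref{pro:isoclases}, with the computer-assisted verifications (non-vanishing of $\mA_2$, computation of $\gamma_2(z)$ and the left coaction) done as in the Appendix. The identifications of $t_z=g_0^3g_1^3$ and of $a_X$, $s_X$ via \eqref{eqn:zdeformed} are exactly right.

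There is one genuine error, however: you claim one should check that
$\mA_2(\lambda_1,\lambda_2)$ ``has the expected dimension $72\cdot|G|/3$''. This is wrong, and not just in the precise value: $\mA_2(\lambda_1,\lambda_2)$ is a right Galois object over $\mH_2=\widehat{\B_2}(4,\omega,-1)\#\ku G$, and by Lemma~\ref{le:z is central} the right $\B(4,\omega,-1)\#\ku G$-coinvariants of $\mH_2$ form a genuine polynomial algebra $\ku[\pi_2(z)t_z^{-1}]$ in one variable. Hence $\mH_2$, and therefore $\mA_2(\lambda_1,\lambda_2)$, is \emph{infinite}-dimensional. You cannot verify a finite dimension via Gr\"obner bases; what one actually checks (as the paper does, and as the Appendix explains with the truncated Gr\"obner basis) is merely that $\mA_2(\lambda_1,\lambda_2)\neq 0$, which is all that Proposition~\ref{pro:lifting1} requires. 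The finite-dimension argument only applies one level further down, after imposing the degree-six relation to get $\mA(\lambda_1,\lambda_2,\lambda_3)$, and even there the paper derives $\dim H(\lambda_1,\lambda_2,\lambda_3)=72|G|$ from $L(\mA,\mH_3)\cong\mL_3$ rather than from a direct computation, as in your sketch of item (d). Apart from this, the proposal is correct.
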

\pf
The algebras $H(\lambda_1,\lambda_2,\lambda_3)$ are found following the strategy described
in Subsection \ref{general
results groups}. We check that the algebras $\mA_2(\lambda_1,\lambda_2)$ are nonzero
using \cite{GAP,GBNP}. We compute $\gamma_2(z)$, for $\gamma_2:\mH_2\to 
\mA_2(\lambda_1,\lambda_2)$ as in
Proposition \ref{pro:lifting1} (b), again
using \cite{GAP,GBNP}, as
explained in the Appendix. We
end up with the liftings $H(\lambda_1,\lambda_2,\lambda_3)$ using Proposition
\ref{pro:liftings3}, which states
(b) and (d), consequently (c) and (e). Now (a) follows from Theorem 
\ref{thm:all-liftings}.  
\epf
\subsection{Pointed Hopf algebras over $\Aff(\F_5,2)$}Let 
$(\lambda_1,\lambda_2,\lambda_3)\in\Ss_{\Aff(\F_5,2)}$. 
Let
$A(\lambda_1,\lambda_2,\lambda_3)$ be the quotient of
$T(V)\#\ku G$ by the ideal generated by
\begin{align*}
&x_0^2  -\lambda_1, \quad x_0x_1 + x_2x_0 + x_3x_2 + x_1x_3   - \lambda_2, \\
(x_0x_1)^2 & + (x_1x_0)^2 - \lambda_2\, (x_1x_0 + x_0x_1)- \lambda_3.
\end{align*}
Let
$H(\lambda_1,\lambda_2,\lambda_3)$ be the quotient of $T(V)\#\ku G$
by the ideal generated by
\begin{align*}
&x_0^2 - \lambda_1 (1- g_0^2),  \qquad x_0x_1 + x_2x_0 + x_3x_2 + x_1x_3 - \lambda_2(1 -
g_0g_1) \text{ and } \\
&x_1x_0x_1x_0 + x_0x_1x_0x_1 -s_X - \lambda_3(1-g_0^2g_1g_2),
\end{align*}
for $s_X=\lambda_2\, (x_1x_0 +
x_0x_1)+\lambda_1\, g_1^2(x_3x_0+ x_2x_3) - \lambda_1\, g_0^2(x_2x_4+ x_1x_2) +
\lambda_2\lambda_1\,g_0^2(1- g_1g_2)$. 
\begin{theorem}\label{teo:f5}
Let $H$ be a lifting of $\B(5,2,-1)$ over $\ku G$. 
\begin{enumerate}\renewcommand{\theenumi}{\alph{enumi}}\renewcommand{\labelenumi}{
(\theenumi)}
\item There exists $(\lambda_1,\lambda_2,\lambda_3)\in\Ss_{\Aff(\F_5,2)}$ such that $H\cong
H(\lambda_1,\lambda_2,\lambda_3)$.
\item $A(\lambda_1,\lambda_2,\lambda_3)$ is a
$(H(\lambda_1,\lambda_2,\lambda_3),\B(5,2,-1)\#\ku G)$-biGalois object for every
$(\lambda_1,\lambda_2,\lambda_3)\in\Ss_{\Aff(\F_5,2)}$.
\item $H$ is a cocycle deformation of $\B(5,2,-1)\#\ku G$.
\item $H(\lambda_1,\lambda_2,\lambda_3)$ is a lifting of $\B(5,2,-1)\#\ku G$ for
every
$(\lambda_1,\lambda_2,\lambda_3)\in\Ss_{\Aff(\F_5,2)}$.
\item $H(\lambda_1,\lambda_2,\lambda_3)\cong
H(\lambda_1',\lambda_2',\lambda_3')$ iff
$(\lambda_1,\lambda_2,\lambda_3)=\mu(\lambda_1',\lambda_2',\lambda_3')$ for some $\mu\in\ku$.
\end{enumerate}
\end{theorem}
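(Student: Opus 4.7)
The plan is to carry out the Strategy of Section \ref{very general results} in exactly the same way as was done for $\B(4,\omega,-1)$ in Theorem \ref{teo:f4}, now specialized to $V=V(5,2,-1)\in\ydg$. First I would verify that the hypotheses of Subsection \ref{general results groups} are satisfied: the ideal $\cJ(5,2,-1)$ admits the adapted stratification
\begin{align*}
\mG_0&=\{x_i^2:\,i\in\F_5\}, \\
\mG_1&=\{x_ix_j+x_{-i+2j}x_i+x_{3i-2j}x_{-i+2j}+x_jx_{3i-2j}\}\setminus\mG_0,\\
\mG_2&=\{z_{(5,2,-1)}\}=\{(x_1x_0)^2+(x_0x_1)^2\},
\end{align*}
so $n=2$. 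By Lemmas \ref{le:f max degree rel} and \ref{le:z is central} the element $\pi_2(z)$ satisfies \eqref{eq:z is skewprimi} and \eqref{eqn:znormal1}, while Lemma \ref{le:neq in group rack} gives \eqref{eqn:gin} and \eqref{eqn:g(z)}. Thus we are in a position to apply the machinery of Propositions \ref{pro:lifting1} and \ref{pro:liftings3}.

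Next I would establish nonvanishing of the intermediate Galois objects $\mA_1(\lambda_1)$ and $\mA_2(\lambda_1,\lambda_2)$ defined in \eqref{a}--\eqref{eqn:galoisnonula}. As in Theorem \ref{teo:f3} one can try the Diamond Lemma, but since $\cJ(5,2,-1)$ already has a more involved quadratic part one actually appeals to \cite{GAP,GBNP} with a chosen Gr\"obner basis in order to verify that $\mA_2(\lambda_1,\lambda_2)\neq 0$ for every admissible choice of parameters in $\Ss_{\Aff(\F_5,2)}$. This also produces a concrete PBW basis inherited from $\B(5,2,-1)$, from which one reads off a section $\gamma_2:\mH_2\to\mA_2(\lambda_1,\lambda_2)$ fixing $\ku G$, as granted by Proposition \ref{pro:lifting1}(b).

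The heart of the argument, and the main computational obstacle, is the explicit determination of $\gamma_2(z_{(5,2,-1)})$ together with the associated element $s_X\in\mL_2(\lambda_1,\lambda_2)$ defined by \eqref{eqn:zdeformed}. Following the procedure sketched in the Appendix, one expands $z_{(5,2,-1)}$ modulo the quadratic and cubic relations of $\mA_2(\lambda_1,\lambda_2)$ using \cite{GAP,GBNP}, pushing all monomials into the PBW basis. The quartic terms $\lambda_2(x_1x_0+x_0x_1)$ in $s_X$, the degree-two correction terms $\lambda_1 g_1^2(x_3x_0+x_2x_3)-\lambda_1 g_0^2(x_2x_4+x_1x_2)$ coming from the square relations, and the scalar $\lambda_2\lambda_1 g_0^2(1-g_1g_2)$ coming from the overlap of the square and quadratic relations should all appear in this reduction, and matching them against $\delta^2_L(\gamma_2(z))-t_z\ot\gamma_2(z)$ yields the displayed formula. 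Once $s_X$ and $a_X=z-\gamma_2(z)$ are pinned down, Proposition \ref{pro:liftings3} immediately gives (b) and (d), and the biGalois character of $A(\lambda_1,\lambda_2,\lambda_3)$ says that $H(\lambda_1,\lambda_2,\lambda_3)$ is a cocycle deformation of $\B(5,2,-1)\#\ku G$.

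Finally, (a) follows from Theorem \ref{thm:all-liftings}: any lifting $H$ of $\B(5,2,-1)$ over $\ku G$ must be isomorphic to some $\mL_3(\lambda_1,\lambda_2,\lambda_3)=H(\lambda_1,\lambda_2,\lambda_3)$ for parameters satisfying \eqref{eqn:cond1-A} and \eqref{eqn:cond1-L}, and hence (c) follows at once. The isomorphism classification (e) is obtained from Proposition \ref{pro:isoclases} by the same rescaling argument used in \cite[Lemma 6.1]{GG}, keeping track that the three homogeneous degrees of $\lambda_1,\lambda_2,\lambda_3$ under the $\ku^\times$-action coming from $x_i\mapsto\mu^{1/2}x_i$ are balanced so that only the global rescaling $(\lambda_1,\lambda_2,\lambda_3)\mapsto\mu(\lambda_1,\lambda_2,\lambda_3)$ preserves the presentation. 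The most delicate step is unambiguously the computation of $s_X$: all conceptual input is already in Section \ref{very general results}, but the bookkeeping of correction terms for $\Aff(\F_5,2)$ is substantially heavier than for $\Aff(\F_4,\omega)$ and essentially forces the use of the computer.
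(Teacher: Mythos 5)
Your proposal takes exactly the same route as the paper: verify the hypotheses of Subsection~\ref{general results groups} (adapted stratification, centrality and skew-primitivity of $\pi_2(z)$ via Lemmas~\ref{le:f max degree rel} and~\ref{le:z is central}, genericity conditions \eqref{eqn:gin}--\eqref{eqn:g(z)} via Lemma~\ref{le:neq in group rack}), then run the Strategy: nonvanishing of $\mA_2(\lambda_1,\lambda_2)$ and computation of $\gamma_2(z)$ and $s_X$ with \cite{GAP,GBNP}, conclude (b)--(d) from Propositions~\ref{pro:lifting1} and~\ref{pro:liftings3}, (a) from Theorem~\ref{thm:all-liftings}, and (e) from Proposition~\ref{pro:isoclases}. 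This is precisely what the paper's proof does, except that the paper displays the full output for $\delta_R^2(z)$ and reads off $\gamma_2(z)=z-\lambda_2(x_1x_0+x_0x_1)$ explicitly (the $\Aff(\F_5,2)$ computation being short enough to write out, unlike the $\Aff(\F_4,\omega)$ one in the Appendix). One terminological slip: the terms $\lambda_2(x_1x_0+x_0x_1)$ in $s_X$ are quadratic, not quartic, in the $x_i$; they are the degree-$2$ correction inherited from $a_X=z-\gamma_2(z)$. Also, for item (e) the invocation of Proposition~\ref{pro:isoclases} is correct, but your heuristic of ``balanced degrees under $x_i\mapsto\mu^{1/2}x_i$'' is loose, since that rescaling sends $\lambda_3$ to $\mu^2\lambda_3$ rather than $\mu\lambda_3$; the actual isomorphism argument in \cite[Lemma 6.1]{GG} is what the paper relies on and handles this properly.
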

\pf
Set $z=(x_0x_1)^2 + (x_1x_0)^2 \in\mA'=\mA_2(\lambda_1,\lambda_2)$, $t_z=g_0^2g_1g_2\in
G$. 
Using \cite{GAP,GBNP}\footnote{See log files in 
\texttt{http://www.mate.uncor.edu/$\sim$aigarcia/publicaciones.htm}.}, the coaction of 
$z$ in $\mA'$ is $\delta_R^2(z)=z\ot1+
t_z\ot z$ plus:
\begin{align*}
&  \lambda_2\, g_0g_3\ot x_1x_0 + \lambda_2\, g_0g_1\ot x_0x_1  -\lambda_2\, g_1x_3\ot
x_1 \\
&\quad + \lambda_2\, g_1x_0\ot x_1  -\lambda_2\, g_0x_3\ot x_0 + \lambda_2\, g_0x_1\ot
x_0.
\end{align*}
If $z'= x_1x_0 + x_0x_1$ we get $\delta_R^2(z -\lambda_2\, z')=(z -\lambda_2\, z')\ot 1+t_z\ot
z$. Thus $\gamma_2(z)=z-\lambda_2\, z'$ and the theorem follows as Theorem \ref{teo:f4}.
\epf

\subsection{Pointed Hopf algebras over $\Aff(\F_5,3)$}

Let $(\lambda_1,\lambda_2,\lambda_3)\in\Ss_{\Aff(\F_5,3)}$.
Let
$A(\lambda_1,\lambda_2,\lambda_3)$ be the quotient of
$T(V)\#\ku G$ by the ideal generated by
\begin{align*}
&x_0^2  -\lambda_1, \quad x_1x_0 + x_0x_2 + x_2x_3 + x_3x_1  - \lambda_2, \\
(x_0x_1)^2 &+ (x_1x_0)^2 - \lambda_2\, (x_0x_1 + x_1x_0) - \lambda_3.
\end{align*}
Let
$H(\lambda_1,\lambda_2,\lambda_3)$ be the quotient of $T(V)\#\ku G$
by the ideal generated by
\begin{align*}
&x_0^2 - \lambda_1 (1- g_0^2),  \qquad x_1x_0 + x_0x_2 + x_2x_3 + x_3x_1 - \lambda_2(1 -
g_0g_1) \text{ and } \\
&x_0x_2x_3x_1 + x_1x_4x_3x_0 -s_X - \lambda_3(1-g_0^2g_1g_3),
\end{align*}
for $s_X=\lambda_2\, (x_0x_1 + x_1x_0) -\lambda_1\, g_1^2(x_3x_2 +x_0x_3) - \lambda_1\, 
g_0^2(x_3x_4 +x_1x_3)  + \lambda_1\lambda_2( g_1^2 + 
g_0^2- 2 g_0^2g_1g_3)$. 
\begin{theorem}\label{teo:f5-3}
Let $H$ be a lifting of $\B(5,3,-1)$ over $\ku G$. 
\begin{enumerate}\renewcommand{\theenumi}{\alph{enumi}}\renewcommand{\labelenumi}{
(\theenumi)}
\item There exists $(\lambda_1,\lambda_2,\lambda_3)\in\Ss_{\Aff(\F_5,3)}$ such that $H\cong
H(\lambda_1,\lambda_2,\lambda_3)$.
\item $A(\lambda_1,\lambda_2,\lambda_3)$ is a
$(H(\lambda_1,\lambda_2,\lambda_3),\B(5,3,-1)\#\ku G)$-biGalois object for every
$(\lambda_1,\lambda_2,\lambda_3)\in\Ss_{\Aff(\F_5,3)}$.
\item $H$ is a cocycle deformation of $\B(5,3-1)\#\ku G$.
\item $H(\lambda_1,\lambda_2,\lambda_3)$ is a lifting of $\B(5,3,-1)\#\ku G$ for
every 
$(\lambda_1,\lambda_2,\lambda_3)\in\Ss_{\Aff(\F_5,3)}$.
\item $H(\lambda_1,\lambda_2,\lambda_3)\cong
H(\lambda_1',\lambda_2',\lambda_3')$ iff
$(\lambda_1,\lambda_2,\lambda_3)=\mu(\lambda_1',\lambda_2',\lambda_3')$ for some $\mu\in\ku$.
\end{enumerate}
\end{theorem}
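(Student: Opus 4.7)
The plan is to mirror the proof of Theorem \ref{teo:f5}, applying the Strategy of Subsection \ref{general results groups} to the Nichols algebra $\B(5,3,-1)$. First, I would verify that all hypotheses of that subsection hold for $(X,q) = (\Aff(\F_5,3),-1)$: the defining ideal $\cJ(5,3,-1)$ admits the stratification \eqref{eqn:stratification} with top-degree generator $z = z_{(5,3,-1)} = (x_1x_0)^2 + (x_0x_1)^2$; $z$ satisfies \eqref{eq:z is skewprimi} and the centrality condition \eqref{eqn:znormal1} thanks to Lemmas \ref{le:f max degree rel} and \ref{le:z is central}; and the group-theoretic conditions \eqref{eqn:gin}, \eqref{eqn:g(z)} follow from Lemma \ref{le:neq in group rack}. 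Having this, Theorem \ref{thm:all-liftings} and Proposition \ref{pro:isoclases} will automatically supply statements (a) and (e), and Corollary \ref{cor:all-liftings} will give (c) and (d) once the intermediate Galois object is shown to be nonzero.

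Second, I would verify $\mA_2(\lambda_1,\lambda_2)\neq 0$ using \cite{GAP,GBNP} as in the previous cases; the Diamond-lemma computation is parallel to the one for $\Aff(\F_5,2)$, since the two racks are duals and the quadratic relations differ only by an obvious reversal of factors (compare \eqref{eq:F5 2} and \eqref{eq:F5 3}).

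Third, the key computational step is to compute the coaction $\delta_R^2(z)$ in $\mA_2(\lambda_1,\lambda_2)$ and extract $\gamma_2(z)$ satisfying \eqref{eqn:zdeformed}. Here I expect, after possibly rewriting $z$ modulo the quadratic ideal into the form $x_0x_2x_3x_1 + x_1x_4x_3x_0$ (which is the cleaner representative used in the statement), that
\begin{align*}
\gamma_2(z) = z - \lambda_2\,(x_0x_1 + x_1x_0) + \lambda_1\, g_1^2(x_3x_2 + x_0x_3) + \lambda_1\, g_0^2(x_3x_4 + x_1x_3) + \lambda_1\lambda_2\,(g_1^2 + g_0^2),
\end{align*}
so that $s_X$ is exactly as in the statement of the theorem. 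This is the main obstacle and is addressed by the same GAP routines described in the Appendix. Once $\gamma_2(z)$ is known, Proposition \ref{pro:liftings3} constructs $A(\lambda_1,\lambda_2,\lambda_3)$ as a right $\mH_3$-Galois object and identifies $L(A,\mH_3) \cong H(\lambda_1,\lambda_2,\lambda_3)$, giving (b); thus $H(\lambda_1,\lambda_2,\lambda_3)$ is a cocycle deformation of $\B(5,3,-1)\#\ku G$, which settles (c) and (d).

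As a sanity check, one may alternatively invoke Proposition \ref{prop:bij co and cop}: since $\Aff(\F_5,3) = \Aff(\F_5,2)^*$ and $-1 = (-1)^{-*}$, the map $A \mapsto A^{\cop}$ puts the liftings classified here in bijection with those of Theorem \ref{teo:f5}. This gives an independent proof of (a) and, together with the fact that $A^{\cop}$ is a cocycle deformation of $H^{\cop}$ whenever $A$ is a cocycle deformation of $H$, an independent proof of (c); it does not, however, bypass the explicit computation of $s_X$ needed for the presentation in the statement.
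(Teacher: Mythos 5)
Your overall route is the same as the paper's: the paper's proof of Theorem \ref{teo:f5-3} is literally ``analogous to Theorem \ref{teo:f5} \emph{mutatis mutandis}'', so the plan of verifying the hypotheses of Subsection \ref{general results groups}, running \cite{GAP,GBNP}, and invoking Proposition \ref{pro:liftings3} and Theorem \ref{thm:all-liftings} is exactly what is intended. The dual--rack sanity check via Proposition \ref{prop:bij co and cop} is a valid and useful observation that the paper does not make explicit here, though it cannot replace the computation of $s_X$, as you note.

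However, your proposed formula for $\gamma_2(z)$ is wrong, and the error is conceptual, not just a typo. In the framework of Proposition \ref{pro:liftings3}, $\gamma_2(z)=z-a_X$ where $a_X$ is the element appearing in the right Galois object $A(\lambda_1,\lambda_2,\lambda_3)$; comparing with the definition of $A(\lambda_1,\lambda_2,\lambda_3)$ for $\Aff(\F_5,3)$ (and with the step ``$\gamma_2(z)=z-\lambda_2 z'$'' in the proof of Theorem \ref{teo:f5}), one must have
\begin{align*}
\gamma_2(z)=z-\lambda_2(x_0x_1+x_1x_0),
\end{align*}
with no $\lambda_1$ terms and no group elements. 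The $\lambda_1\,g_i^2(\cdots)$ and $\lambda_1\lambda_2(\cdots)$ terms belong exclusively to $s_X$, and $s_X$ is a different object: it is determined by the \emph{left} coaction via equation \eqref{eqn:zdeformed}, $(z-s_X)\ot 1=\delta_L(\gamma_2(z))-t_z\ot\gamma_2(z)$, which is the third and separate step (iii) described in the Appendix. Your formula conflates the output of step (ii) with (a close variant of) $z-s_X$, so if taken literally it would not satisfy the defining property of a section (it would fail $\delta_R^2(\gamma_2(z))=\gamma_2(z)\ot 1+t_z\ot z$), and feeding it into \eqref{eqn:zdeformed} would then produce an incorrect $s_X$. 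You also drop the $-2g_0^2g_1g_3$ term from $\lambda_1\lambda_2(g_1^2+g_0^2-2g_0^2g_1g_3)$, which, together with the sign issues, is a further sign that the two quantities have been blended. The fix is simply to separate the computation: first $\delta_R^2(z)$ and $\gamma_2(z)=z-\lambda_2(x_0x_1+x_1x_0)$, then $\delta_L(\gamma_2(z))$ to read off $s_X$.
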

\pf
Analogous to Theorem \ref{teo:f5-3} {\it mutatis mutandis}.
\epf

\subsection{Proof of Main Theorem 1}
Assume $X=\Aff(\F_3,2)$. Let $H$ be a pointed Hopf algebra over $G$ whose infinitesimal 
braiding is given by a principal YD-realization $V\in\ydg$ of $(X,-1)$. Then $H$ is generated in 
degree one by \cite[Theorem 2.1]{AG3}. Therefore $H$ is a lifting of $\B(V)$ over $\ku G$ 
and Main Theorem 1 (i) follows by Theorem \ref{teo:f3}.

The proof of items (ii), (iii), (iv) is analogous, again using the fact that 
any such $H$ is generated in degree one by \cite[Theorem 2.1]{AG3} and Theorems 
\ref{teo:f4}, \ref{teo:f5} or \ref{teo:f5-3}, depending on each case.
\qed

\section{Copointed Hopf algebras over affine racks}

Through this section, we consider the affine racks $\Aff(\F_b,N)$ with constant $2$-cocycle $-1$.  
We fix a finite group $G$ and a principal YD-realization 
$(\cdot,g,\{\chi_i\}_{i\in X})$ of $(\Aff(\F_b,N),-1)$ over $G$. Let $\B(-1,b,N)$ be the 
Nichols algebra of $W(-1,b,N)=\ku\{x_i\}_{i\in\F_b}$ in $\ydgdual$ given by 
\eqref{eqn:yetter-drinfeld-dual}.
We give the classification of the lifting Hopf algebras of $\B(-1,b,N)$ over
$\ku^G$ and therefore the proof of Main Theorem  2.

\subsection{Copointed Hopf algebras over $\Aff(\F_3,2)$} 
This subsection is inspired by \cite{AV,AV2} where the case $G=\Sn_3$ was considered.
Recall that $\Inn_\rhd\Aff(\F_3,2)=\Sn_3=\Aut_{\rhd}\Aff(\F_3,2)$ by \eqref{eq:inn equal aut}.
Let 
$G\longrightarrow\Sn_3$, $t\mapsto\overline{t}$ be the epimorphism given by Lemma 
\ref{le:the enveloping se proyecta} \eqref{ite:Autle:the enveloping se proyecta}.
We consider the group $\Gamma=\ku^*\times\Sn_3$ acting on
$$\gA=\bigl\{\ba=(a_0,a_1,a_2)\in\ku^{\F_3}:a_0+a_1+a_2=0\bigr\}$$
via $(\mu,\theta)\triangleright\ba=\mu(a_{\theta0},a_{\theta1},a_{\theta2})$. The
equivalence class of $\ba$ under this
action is denoted by $[\ba]$. Given
$\ba\in\gA$, we define
\begin{align*}
f_{i}=\sum_{t\in G}(a_i - a_{t^{-1}\cdot i})\,\delta_t \in
\ku^G,\quad i\in\F_3.
\end{align*}

\begin{fed}\label{def: Affin 3} Set $\cA_{G,[0]}=\B(-1,3,2)\#\ku^G$. Let
$\ba\in\gA$ and assume that $g_i^2=e$ $\forall i\in\F_3$. We
define the Hopf algebra 
$\cA_{G,[\ba]}=T(-1,3,2)\#\ku^G/\cJ_{\ba}$ where $\cJ_{\ba}$ is the ideal generated
by 
\begin{align*}
x_i^2-f_i, \quad x_ix_j+x_{-i+2j}x_i+x_jx_{-i+2j},\quad i,j\in\F_3,
\end{align*}
and the algebra $\cK_{G,[\ba]}=T(-1,3,2)\#\ku^G/\cI_{\ba}$ where $\cI_{\ba}$ is 
generated
by 
\begin{align*}
x_i^2+f_i-a_i, \quad x_ix_j+x_{-i+2j}x_i+x_jx_{-i+2j},\quad i,j\in\F_3.
\end{align*}
\end{fed}

The algebras $\cA_{G,[\ba]}$ and $\cK_{G,[\ba]}$ are nonzero by the next lemma.

\begin{lem}\label{le:rep of Lga y Aga}
Consider the $\ku^G$-module $M=\ku\{m_t\}_{t \in G}$, $m_t\in M[t]$. Then 
for all
$\ba\in\gA$, $M$
is an $\cA_{G,[\ba]}$-module and a $\cK_{G,[\ba]}$-module via
\begin{align*}
\quad x_i\cdot m_t=\begin{cases}
  m_{g_i^{-1}t} & \mbox{ if }\sgn\overline{t}=-1,\\
\lambda_{i,t}\,m_{g_i^{-1}t} & \mbox{ if }\sgn\overline{t}=1.\\
\end{cases}
\end{align*} 
where $\lambda_{i,t}=(a_i-a_{t^{-1}\cdot i})$ for $\cA_{G,[\ba]}$ and 
$\lambda_{i,t}=-a_{t^{-1}\cdot i}$ for $\cK_{G,[\ba]}$.
\end{lem}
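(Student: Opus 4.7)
The plan is a direct verification that the proposed formula defines a module structure, broken into two tasks: compatibility with the smash product commutation relation from \eqref{eq:TV smash ku a la G}, and vanishing on $M$ of every generator of the defining ideal. The first is immediate: since the formula places $x_i\cdot m_t$ in the isotypic component of weight $g_i^{-1}t$, one has $\delta_s\cdot(x_i\cdot m_t)=\delta_{s,g_i^{-1}t}\,x_i\cdot m_t = x_i\cdot(\delta_{g_is}\cdot m_t)$, matching the relation $\delta_s x_i = x_i\delta_{g_is}$. So all the content is in the second task.

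For the cross-term $x_ix_j + x_kx_i + x_jx_k$ with $i\neq j$ and $k=-i+2j=i\rhd j$, I would first check in $\Aff(\F_3,2)$ that $j\rhd k=i$ and $k\rhd i=j$, so $\{i,j,k\}=\F_3$ and the rack relations give $g_ig_j = g_kg_i = g_jg_k$; call this common product $h$. Each of the three terms then sends $m_t$ to a scalar multiple of $m_{h^{-1}t}$. Because each $g_\ell$ maps to a transposition under $G\to\Sn_3$ (using Lemma \ref{le:the enveloping se proyecta}(\ref{ite:Autle:the enveloping se proyecta}) and $\Inn_\rhd\Aff(\F_3,2)=\Sn_3$), applying a single $x_\ell$ flips $\sgn\overline{t}$, so in every summand exactly one of the two consecutive applications of $x$ contributes a $\lambda$-factor. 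After bookkeeping, the coefficient of $m_{h^{-1}t}$ is a sum $\lambda_{*,*}+\lambda_{*,*}+\lambda_{*,*}$ whose three pairs of indices run bijectively over $\F_3\times\{\text{orbit of }t\}$; expanding with either definition of $\lambda$, one gets $\sum_{r\in\F_3} a_r - \sum_{r\in\F_3} a_{t^{-1}\cdot r}$ (up to common signs), which vanishes by $\ba\in\gA$ together with the fact that $t^{-1}$ permutes $\F_3$ by rack automorphisms.

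For the degree-two relation $x_i^2=f_i$ in $\cA_{G,[\ba]}$ and $x_i^2=a_i-f_i$ in $\cK_{G,[\ba]}$, I would use $g_i^2=e$ to see that two successive applications of $x_i$ return $m_t$ to itself, and since $\sgn\overline{g_i^{-1}t}=-\sgn\overline{t}$, exactly one of the two steps contributes the factor $\lambda_{i,g_i^{-1}t}$ (with $t$ replaced by $g_i^{-1}t$ when $\sgn\overline{t}=-1$). Evaluating this factor using $g_i\cdot i = i\rhd i = i$ and comparing with $f_i\cdot m_t = (a_i-a_{t^{-1}\cdot i})m_t$ (immediate from $\delta_s\cdot m_t=\delta_{s,t}m_t$) gives agreement in both cases. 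The main obstacle is the sign/parity bookkeeping in the three-term sum for the cross-terms, where one must track how the $\sgn\overline{t}$ switch interacts with the three cyclically related choices of leading index; all the work goes into showing that the resulting sum of $\lambda$'s has the ``telescoping'' form to which $\ba\in\gA$ can be applied.
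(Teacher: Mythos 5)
Your proposal is correct and follows essentially the same route as the paper: check compatibility with the $\ku^G$-module structure via $\delta_h x_i = x_i\delta_{g_ih}$, observe that $x_i^2\cdot m_t=\lambda_{i,t}m_t$ (using $g_i^2=e$ and $i\rhd i=i$), and show that each three-term quadratic relation acts by a scalar that collapses to $\pm(a_0+a_1+a_2)=0$ because $g_ig_j=g_kg_i=g_jg_k$ and $t^{-1}$ permutes $\F_3$. The paper's proof is just a terser version of the same verification.
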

\begin{proof}
We check that the action of $\cK_{G,[\ba]}$ is well-defined; for
$\cA_{G,[\ba]}$ it is similar. Notice that $\sgn(\overline{g_i})=-1$. We start by
$\delta_h
x_i=x_i\delta_{g_ih}$, cf. \eqref{eq:TV smash ku a la G}:
\begin{align*}
\delta_h(x_i\cdot m_t)&=\delta_h(\lambda\, m_{g_i^{-1}t})=\lambda\delta_{g_ih}(t) 
m_{g_i^{-1}t}=x_i\cdot(\delta_{g_ih}\cdot m_{t})
\end{align*}
for a certain $\lambda\in\ku$. Clearly $x_i\cdot(x_i\cdot m_t)=\lambda_{i,t}\, m_t$.
Since $\ba\in\gA$, then
$
(x_ix_j+x_{-i+2j}x_i+x_jx_{-i+2j})\cdot m_t=-(a_0+a_1+a_2)\,m_{g_i^{-1}g_j^{-1}t}=0.
$ 
\end{proof}

The following theorem presents all the liftings of 
$\B(-1,3,2)$ over $\ku^G$.

\begin{theorem}\label{prop: clas of F3 2}
Let $H$ be a lifting of $\B(-1,3,2)$ over $\ku^G$. 
\begin{enumerate}\renewcommand{\theenumi}{\alph{enumi}}\renewcommand{\labelenumi}{
(\theenumi)}
\item\label{item:no deforma g gen F3 2} If $g_i^2\neq e$ for some (and thus all)
$i\in\F_3$, then $H\simeq\cA_{G,[0]}$.
\smallbreak
\item\label{item:si deforma g gen F3 2} If $g_i^2= e$ for some (and thus all) $i\in\F_3$,
then there is $\ba\in\gA$ such that $H\simeq\cA_{G,[\ba]}$.
\smallbreak
\item\label{item:Agba is galois bi-galois F3 2} $\cK_{G,[\ba]}$ is a
$(\cA_{G,[0]},\cA_{G,[\ba]})$-biGalois object for all $\ba\in\gA$.
\smallbreak
\item\label{item:Lgba is cocycle deformation F3 2} $\cA_{G,[\ba]}$ is a cocycle
deformation of $\cA_{G,[\bb]}$ for all $\ba,\bb\in\gA$.
\smallbreak
\item\label{item:Lgba is lifting F3 2} $\cA_{G,[\ba]}$ is a lifting of
$\B(-1,3,2)$ over $\ku^G$ for all $\ba,\bb\in\gA$.
\smallbreak
\item\label{item:Lgba iso class F3 2} $\cA_{G,[\ba]}\simeq\cA_{G,[\bb]}$ if and only if
$[\ba]=[\bb]$.
\end{enumerate}
\end{theorem}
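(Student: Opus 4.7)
The plan is to derive the classification from Theorem \ref{cor:generadors of ker phi} by exhibiting a suitable good module of relations for $\B(-1,3,2)$ and reading off the possible liftings via Lemma \ref{thm:liftings}. Concretely, by Proposition \ref{pro:rels-dual}(a) the ideal $\cJ(-1,3,2)$ is generated by the quadratic relations \eqref{eq:F3 2}. I would take $M=M_1\oplus M_2$, with $M_1$ the $\ydgdual$-submodule of $T(W)^{\otimes 2}$ spanned by the $R_{ij}:=x_ix_j+x_{2j-i}x_i+x_jx_{2j-i}$ and $M_2=\ku\{x_i^2:i\in\F_3\}$; these are stable under the $\ku^G$ action and coaction, and after killing $M_1$ the $x_i^2$'s become primitive in $\widehat{\B_2}(-1,3,2)\#\ku^G$, so $M$ is good in the sense of \cite[Definition 4.10]{AAnMGV}.

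For parts (a) and (b), consider a lifting map $\phi:T(-1,3,2)\#\ku^G\to H$. The weight of $R_{ij}$ is $g_i^{-1}g_j^{-1}$ and that of $x_i^2$ is $g_i^{-2}$. By Lemma \ref{le:neq in group rack}(a) we have $g_ig_j\neq g_k$ and $g_ig_j\neq e$, so Lemma \ref{thm:liftings}\eqref{cor:liftings Mtimes} forces $\phi(R_{ij})=0$. For $x_i^2$: if $g_i^2\neq e$ (case a), \eqref{eqn:gin} combined with Lemma \ref{thm:liftings}\eqref{cor:liftings Mtimes} gives $\phi(x_i^2)=0$, so $\cI_M=\cJ_{\mathbf 0}$ and $H\simeq\cA_{G,[\mathbf 0]}$ by Theorem \ref{cor:generadors of ker phi}. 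If $g_i^2=e$ (case b), then $x_i^2\in M_2[e]$ and the $G$-action on $\ku\{x_i^2\}$ is $t\cdot x_i^2=\chi_i(t^{-1})^2x_{t^{-1}\cdot i}^2=x_{t^{-1}\cdot i}^2$ (trivial character, since $q\equiv -1$). Lemma \ref{thm:liftings}\eqref{cor:liftings Me} then yields scalars $a_0,a_1,a_2$ with $\phi(x_i^2)=\sum_g(a_i-a_{g\cdot i})\delta_{g^{-1}}=f_i$; since the formula only depends on $(a_i)$ modulo a common constant I normalize $a_0+a_1+a_2=0$ to get $\ba\in\gA$, and Theorem \ref{cor:generadors of ker phi} yields $H\simeq\cA_{G,[\ba]}$.

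For parts (c), (d) and (e), the idea is to realize $\cK_{G,[\ba]}$ simultaneously as a left $\cA_{G,[\mathbf 0]}$-comodule algebra and a right $\cA_{G,[\ba]}$-comodule algebra via the coactions induced by $\Delta$ of $T(-1,3,2)\#\ku^G$ (the shift $a_i\mapsto a_i$ vs.\ $a_i\mapsto 0$ on the two sides precisely accounts for the two quotients). I then want to show that both canonical maps are bijective; non-triviality of $\cK_{G,[\ba]}$ is guaranteed by the representation constructed in Lemma \ref{le:rep of Lga y Aga}, and the upper bound on the dimension is forced by the defining quadratic and square relations, yielding a PBW-like basis of size $|G|\cdot\dim\B(-1,3,2)$. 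Assuming bijectivity, \cite[Theorem 3.9]{S} gives (d); then $\cA_{G,[\ba]}$ is obtained from the bosonization $\cA_{G,[\mathbf 0]}$ by a cocycle twist, hence is a lifting, giving (e).

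For part (f), an isomorphism $\Theta:\cA_{G,[\ba]}\to\cA_{G,[\bb]}$ must restrict to an automorphism of $\ku^G$, dual to some $\alpha\in\Aut G$, and by Lemma \ref{thm:liftings}\eqref{item:iso between As} it sends the infinitesimal braiding $V$ to itself. Compatibility with the comodule structure forces $\alpha$ to permute the $g_i$, inducing $\sigma\in\Aut_{\rhd}\Aff(\F_3,2)=\Sn_3$, and one writes $\Theta(x_i)=\mu x_{\sigma(i)}$ for a scalar $\mu\in\ku^*$. Applying $\Theta$ to $x_i^2=f_i^{(\ba)}$ gives $\mu^2 f_{\sigma(i)}^{(\bb)}=f_i^{(\ba)}$, i.e.\ $\ba=(\mu^2,\sigma)\triangleright\bb$; the converse is a direct construction from these data. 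The main obstacle I foresee is establishing the biGalois property of $\cK_{G,[\ba]}$ in part (c): commutativity of the two coactions is formal, but bijectivity of the canonical maps ultimately rests on a PBW/dimension argument that has to be set up carefully, since unlike in the pointed case one cannot simply invoke Diamond Lemma on an adapted stratification as in Proposition \ref{pro:lifting1}.
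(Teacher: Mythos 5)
Your treatment of parts (a), (b) and (f) follows the paper's own argument essentially verbatim: identify the weight decomposition of the module of quadratic relations, kill the non-$e$ isotypic pieces via Lemma~\ref{thm:liftings}\eqref{cor:liftings Mtimes} and Lemma~\ref{le:neq in group rack}, read off the $e$-component deformation via Lemma~\ref{thm:liftings}\eqref{cor:liftings Me}, and conclude with Theorem~\ref{cor:generadors of ker phi}; for (f) the use of Lemma~\ref{thm:liftings}\eqref{item:iso between As} to pin down $\Theta$ on $V$ is also the paper's route. One small inaccuracy: to rule out $\supp M^\times\cap\supp V\neq\emptyset$ you need both $g_ig_j\neq e$ (Lemma~\ref{le:neq in group rack}(a)) \emph{and} $g_ig_j\neq g_k$, and the latter comes from part~(c) of that lemma (comparing degrees mod $n=2$), not from part~(a).

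For (c)--(e), however, you have correctly identified a gap but not closed it. You propose to show directly that both canonical maps for $\cK_{G,[\ba]}$ are bijective ``via a PBW/dimension argument,'' and explicitly concede that you do not see how to run such an argument (Diamond Lemma on an adapted stratification is not available here). This is precisely the hard point, and your sketch does not resolve it: having a faithful-enough representation (Lemma~\ref{le:rep of Lga y Aga}) shows $\cK_{G,[\ba]}\neq 0$ but gives neither an upper bound on $\dim\cK_{G,[\ba]}$ nor bijectivity of the canonical maps.

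The paper avoids this entirely by invoking Masuoka's result \cite[Theorem 2]{masuoka}. The mechanism is the following. Let $K\subset T(-1,3,2)$ be the braided Hopf subalgebra generated by the quadratic relations $W$; then $K\#\ku^G$ is a Hopf subalgebra of $T(-1,3,2)\#\ku^G$ and $J=\langle W\rangle$ is a Hopf ideal there. By \cite[Lemma 28]{AV2} one builds a convolution-invertible algebra map $\psi=\psi_K\otimes\eps:K\#\ku^G\to\ku$ with $\psi_K(x_i^2)=-a_i$ and $\psi_K$ vanishing on the mixed quadratics. Then a computation shows $\psi^{-1}\rightharpoonup J\leftharpoonup\psi=\cJ_\ba$ and $\psi^{-1}\rightharpoonup J=\cI_\ba$, and Masuoka's theorem asserts, \emph{once the one-sided twist quotient is nonzero}, that $\cK_{G,[\ba]}$ is automatically a $(\cA_{G,[0]},\cA_{G,[\ba]})$-biGalois object. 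Non-vanishing is exactly what Lemma~\ref{le:rep of Lga y Aga} supplies. So the biGalois structure and the correct dimension come for free, without any PBW analysis, and then (d), (e) follow as you describe via \cite[Theorem 3.9]{S}. This twist-of-a-Hopf-ideal argument is the missing ingredient in your proposal.
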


\begin{proof}
Let $\phi:T(-1,3,2)\#\ku^G\rightarrow H$ be a lifting map and let 
$$
W=\{ x_i^2,\, x_ix_j+x_{-i+2j}x_i+x_jx_{-i+2j}: i,j\in\F_3\}
$$
be the set of quadratic relations defining the Nichols
algebra $\B(-1,3,2)$, see Proposition \ref{pro:rels-dual}. Let $M\subset 
T(-1,3,2)$ 
be 
the
Yetter-Drinfeld submodule generated by $W$. Then
$\phi(M[g_i^{-1}g_j^{-1}])=0$ by Lemma
\ref{thm:liftings} \eqref{cor:liftings Mtimes} using Lemma \ref{le:neq in group rack}
\eqref{item:general le:neq in group rack} and (c). Hence: 

\eqref{item:no deforma g gen F3 2} follows from Lemma \ref{thm:liftings}
\eqref{cor:liftings
Mtimes} and Theorem \ref{cor:generadors of ker phi} using Lemma \ref{le:neq in group rack}
\eqref{item:general le:neq in group rack}.

\eqref{item:si deforma g gen F3 2} follows from Lemma \ref{thm:liftings}
\eqref{cor:liftings Me}
and Theorem \ref{cor:generadors of ker phi}.

\eqref{item:Agba is galois bi-galois F3 2} follows from \cite[Theorem 2]{masuoka}. In
fact, fix $\ba\in\gA$ and let $K$ be the braided Hopf subalgebra
of $T(-1,3,2)$ generated by $W$. Then $K\#\ku^{G}$ is a Hopf subalgebra of 
$T(-1,3,2)\#\ku^{G}$. By \cite[Lemma 28]{AV2}, we can define an algebra map
$\psi=\psi_K\otimes\epsilon:K\#\ku^{G}\rightarrow\ku$ where
$$
\psi_K(x_i^2)=-a_i\,\mbox{ and }\,\psi_{K}(x_ix_j+x_{-i+2j}x_i+x_jx_{-i+2j})=0\quad\forall
i,j\in\F_3.
$$
If $J=\langle W\rangle \subset K\#\ku^{G}$, then $\psi^{-1}\rightharpoonup
J\leftharpoonup\psi=\cJ_{\ba}$ and $\psi^{-1}\rightharpoonup J=\cI_{\ba}$. By Lemma
\ref{le:rep of Lga y Aga}, $\cK_{G,[\ba]}\neq0$ and  \cite[Theorem 2]{masuoka}
asserts \eqref{item:Agba is galois bi-galois F3 2}; hence \eqref{item:Lgba is cocycle
deformation F3 2} and \eqref{item:Lgba is lifting F3 2}.

\eqref{item:Lgba iso class F3 2} Fix $\ba,\bb\in\gA$. Let
$\phi_{\ba}$ and $\phi_{\bb}$ be lifting maps of $\cA_{G,[\ba]}$ and $\cA_{G,[\bb]}$.
Let
$\Theta:\cA_{G,[\ba]}\rightarrow\cA_{G,[\bb]}$ be an isomorphism of Hopf algebras. Then
$(\Theta_{|\ku^{G}})^*$ induces a group automorphism $\theta$ of $G$. By Lemma
\ref{thm:liftings} \eqref{item:iso between As} and
using the adjoint action of $\ku^{G}$, we see that $\theta$ is a rack automorphism of
$\Aff(\F_3,2)$ and
$\Theta\phi_{\ba}(x_i)=\mu_i\phi_{\bb}(x_{\theta i})$ with
$\mu_{i}\in\ku^*$ for all $i\in\F_3$. Since $\Theta$ is a coalgebra map, using
\eqref{eq:TV smash ku a la G}
we obtain that $\mu_{i}=\mu$ for all $i\in\F_3$. Therefore
$\ba=(\mu^2,\theta)\triangleright\bb$. The proof of the converse statement is easy, recall
that $\Inn_\rhd\Aff(\F_3,2)=\Sn_3$.
\end{proof}

\subsection{Copointed Hopf algebras over $\Aff(\F_b,N)$} 
Here $\Aff(\F_b,N)$ denotes one the racks $\Aff(\F_4,\omega)$, $\Aff(\F_5,2)$, $\Aff(\F_5,3)$, $\Aff(\F_7,3)$ 
or $\Aff(\F_7,5)$. Recall that $\chi_G=\chi_i$ is a multiplicative character for all $i\in X$ 
by Lemma 
\ref{le:the enveloping se proyecta} \eqref{ite:chiG:the enveloping se proyecta}.   Let 
$\pi_2:T(-1,b,N)\twoheadrightarrow\widehat{\B_2}(-1,b,N)$ be the natural 
projection. 
Set $z'=z'_{(-1,b,N)}$ and $\chi_z=\chi_G^{\deg z}$, recall \eqref{eqn:chiz dual}.

\begin{fed}\label{def:Lglambda}
Set $\cA_{G,0}=\B(-1,b,N)\#\ku^G$. If $z'\in T(-1,b,N)[e]$ and 
$\lambda\in\ku^*$, 
then we define the Hopf algebra 
$$\cA_{G,\lambda}=\widehat{\B_2}(-1,b,N)\#\ku^G/\langle\pi_2(z')-\lambda(1-\chi_z^{-1}
)\rangle$$
and the algebra
$\cK_{G,\lambda}=\widehat{\B_2}(-1,b,N)\#\ku^G/\langle\pi_2(z')-\lambda\rangle$.
\end{fed}

The following theorem presents all the liftings of $\B(-1,b,N)$ over $\ku^G$.

\begin{theorem}\label{thm:lifting over affines grandes}
Let $H$ be a lifting of $\B(-1,b,N)$ over $\ku^G$.
\begin{enumerate}\renewcommand{\theenumi}{\alph{enumi}}\renewcommand{\labelenumi}{
(\theenumi)}
\item\label{item:no deforma g gen thm:lifting over affines grandes} If $G$ is generated by
$\{g_i^{-1}:i\in\F_b\}$ or $\chi_z=\varepsilon$, then $H\simeq\cA_{G,0}$.
\smallbreak
\item\label{item:no deforma no e thm:lifting over affines grandes} If $z'\in
T(-1,b,N)^\times$, $H\simeq\cA_{G,0}$.
\smallbreak
\item\label{item:quadratic thm:lifting over affines grandes} If $z'\in T(-1,b,N)[e]$, 
then
$H\simeq\cA_{G,\lambda}$ for some $\lambda\in\ku$.
\smallbreak
\item\label{item:Aglambda is galois bi-galois thm:lifting over affines grandes}
$\cK_{G,\lambda}$ is a $(\cA_{G,0},\cA_{G,\lambda})$-biGalois object
for all $\lambda\in\ku$.
\smallbreak
\item\label{item:Lglambda is cocycle deformation thm:lifting over affines grandes}
$\cA_{G,\lambda}$ is a cocycle deformation of $\cA_{G,\lambda'}$,
for all $\lambda, \lambda'\in\ku$.
\smallbreak
\item\label{item:Lglambda lifting thm:lifting over affines grandes}
$\cA_{G,\lambda}$ is a lifting of $\B(-1,b,N)$ over $\ku^G$
for all $\lambda, \lambda'\in\ku$.
\smallbreak
\item\label{item:Lglambda iso class thm:lifting over affines grandes}
$\cA_{G,\lambda}\simeq\cA_{G,1}\not\simeq\cA_{G,0}$ for all $\lambda\in\ku$.
\end{enumerate}
 \end{theorem}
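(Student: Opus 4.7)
The strategy parallels Theorem \ref{prop: clas of F3 2}, now with the single top-degree relation $z'$ playing the role of the quadratic deformations. Let $\phi\colon T(-1,b,N)\#\ku^G\to H$ be the lifting map from \eqref{eq:properties of A and phi} and let $M_{\mathrm{low}}$ be the Yetter-Drinfeld submodule of $T(-1,b,N)$ generated by the degree-two relations $\{x_i^2,\,b_C\}$ from Proposition \ref{pro:rels-dual}. I first show $\phi(M_{\mathrm{low}})=0$: by Lemma \ref{le:neq in group rack}(c) with constant cocycle $-1$ (so $n=2$), no element of $\{g_ig_j,\,g_i^2\}$ lies in $\{g_k^{-1}\}=\supp V$, so Lemma \ref{thm:liftings}\eqref{cor:liftings Mtimes} kills every simple YD-summand of $M_{\mathrm{low}}$ with non-$e$ support. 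For summands of support $\{e\}$, Lemma \ref{le:neq in group rack}(a) excludes the $b_C$ case, and any candidate deformation of $x_i^2$ allowed by Lemma \ref{thm:liftings}\eqref{cor:liftings Me} is ruled out by compatibility with $b_C=0$, which forces all deformation parameters to vanish in each of the five bigger racks. Hence $\phi$ factors through $\widehat{\B_2}(-1,b,N)\#\ku^G$.

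\textbf{Classifying $\phi(\pi_2(z'))$, proving (a)--(c).} The 1-dimensional YD-submodule $\ku\pi_2(z')$ has $\ku^G$-support $\{t_z^{-1}\}$ and coaction $\chi_z^{-1}$ by \eqref{eqn:chiz dual}. In case (b), $t_z\neq e$ and $t_z\notin\{g_i\}$ by \eqref{eqn:g(z)}, so $\{t_z^{-1}\}\cap\supp V=\emptyset$ and Lemma \ref{thm:liftings}\eqref{cor:liftings Mtimes} forces $\phi(\pi_2(z'))=0$. In case (c), $t_z=e$ and Lemma \ref{thm:liftings}\eqref{cor:liftings Me} yields $\phi(\pi_2(z'))=\lambda(1-\chi_z^{-1})$ for some $\lambda\in\ku$. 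In case (a), the hypothesis $G=\langle g_i^{-1}\rangle$ combined with $\chi_z(g_i)=1$ from Lemma \ref{le:f max degree rel} forces $\chi_z=\varepsilon$, hence $1-\chi_z^{-1}=0$ and $\phi(\pi_2(z'))=0$ regardless of $t_z$. Theorem \ref{cor:generadors of ker phi}, applied to the good module of relations $M_{\mathrm{low}}\oplus\ku\pi_2(z')$, then identifies $H\simeq\cA_{G,\lambda}$ (with $\lambda=0$ in cases (a),(b)).

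\textbf{biGalois objects and isomorphism classes, proving (d)--(g).} For (d), following Theorem \ref{prop: clas of F3 2}\eqref{item:Agba is galois bi-galois F3 2} via \cite[Lemma 28]{AV2}, let $K\subset T(-1,b,N)$ be the braided Hopf subalgebra generated by the defining relations, and define $\psi=\psi_K\otimes\varepsilon\colon K\#\ku^G\to\ku$ by $\psi_K(x_i^2)=\psi_K(b_C)=0$ and $\psi_K(\pi_2(z'))=-\lambda$. Letting $J\subset K\#\ku^G$ denote the ideal generated by $\{x_i^2,\,b_C,\,\pi_2(z')\}$, direct computation shows that $\psi^{-1}\rightharpoonup J\leftharpoonup\psi$ equals the defining ideal of $\cA_{G,\lambda}$ and $\psi^{-1}\rightharpoonup J$ that of $\cK_{G,\lambda}$; nontriviality of $\cK_{G,\lambda}$ follows by exhibiting a module analogous to Lemma \ref{le:rep of Lga y Aga}, or by filtering and noting $\gr\cK_{G,\lambda}\cong\B(-1,b,N)\#\ku^G\neq 0$. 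Then \cite[Theorem 2]{masuoka} delivers (d), from which (e) and (f) follow formally. For (g), an isomorphism $\Theta\colon\cA_{G,\lambda}\to\cA_{G,\lambda'}$ restricts on $\ku^G$ to a coalgebra automorphism, inducing a group automorphism $\theta$ of $G$; Lemma \ref{thm:liftings}\eqref{item:iso between As} plus $e\notin\supp V$ forces $\Theta\phi(x_i)=\mu\phi'(x_{\theta i})$ for a common scalar $\mu\in\ku^*$ (by comparing coproducts in $\ku^G$), so comparing the image of $\pi_2(z')$ yields $\lambda=\mu^{\deg z'}\lambda'$. Thus all nonzero $\lambda$ give the same isomorphism class $\cA_{G,1}$, distinguished from $\cA_{G,0}$ by whether $\pi_2(z')$ vanishes modulo $\ku^G$.

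\textbf{Main obstacle.} The critical step is the vanishing $\phi(x_i^2)=0$ even when $g_i^2=e$: the analogous deformations really do exist for $\Aff(\F_3,2)$ and produce the parametric family $\cA_{G,[\ba]}$ of Theorem \ref{prop: clas of F3 2}, so here one must exploit the extra rigidity provided by the $b_C$ relations specific to the five bigger racks, either via a uniform rack-theoretic argument or case-by-case. The other technical hurdle is producing an explicit nonzero module for $\cK_{G,\lambda}$, generalizing Lemma \ref{le:rep of Lga y Aga} to the more involved top-degree relation $z'$.
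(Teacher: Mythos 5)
Your overall architecture is right, but two of the steps you flag as hard are actually not the genuine obstacles, and the step you treat lightly is where the real content lies.

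\textbf{The ``main obstacle'' is not an obstacle.} You worry that $g_i^2=e$ might occur for the five bigger racks and that the vanishing $\phi(x_i^2)=0$ would then need to be forced ``via the $b_C$ relations''. But for every one of $\Aff(\F_4,\omega)$, $\Aff(\F_5,2)$, $\Aff(\F_5,3)$, $\Aff(\F_7,3)$, $\Aff(\F_7,5)$ the rack has degree $\ord(N)>2$, so $\phi_i^2\neq\id$ and Lemma~\ref{le:neq in group rack}\eqref{item:no F3 2 le:neq in group rack} gives $g_i^2\neq e$ for every $i$ and every YD-realization. Together with $g_ig_j\neq e$ from part \eqref{item:general le:neq in group rack}, the whole module of quadratic relations $M$ satisfies $M=M^\times$, and $\supp M\cap\supp W=\emptyset$ by part (c) with $n=2$. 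So $\phi(M)=0$ is immediate from Lemma~\ref{thm:liftings}\eqref{cor:liftings Mtimes}; no compatibility-with-$b_C$ argument is needed or used. (Your proposed compatibility argument is also a hand-wave — you state it ``forces all deformation parameters to vanish'' without saying why.) This is precisely why $\Aff(\F_3,2)$, of degree $2$, is treated separately: only there can $g_i^2=e$, and only there does a parametric family of quadratic deformations appear.

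\textbf{The genuine gap is in (d): nontriviality of $\cK_{G,\lambda}$.} You propose to establish $\cK_{G,\lambda}\neq 0$ either by ``exhibiting a module analogous to Lemma~\ref{le:rep of Lga y Aga}'' (you do not construct one, and for relations of degree $4$ or $6$ this is far from routine) or by ``filtering and noting $\gr\cK_{G,\lambda}\cong\B(-1,b,N)\#\ku^G$'' — but this latter claim is exactly the dimension statement that one is trying to prove, so it is circular. The paper avoids this entirely: it identifies $\ku[w]$, $w=\pi_2(z')\chi_z$, as the subalgebra of right coinvariants of $\widehat{\B_2}(-1,b,N)\#\ku^G$ over $\B(-1,b,N)\#\ku^G$ (Lemma~\ref{le:f max degree rel prima}), and then applies Gunther's \cite[Theorem 4]{Gu} to the Yetter--Drinfeld algebra map $\ku[w]\to\widehat{\B_2}(-1,b,N)\#\ku^G$, $w\mapsto w-\lambda\chi_z$. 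This produces $\cK_{G,\lambda}$ as a cleft object, and cleft objects are automatically nonzero. Your Masuoka/\cite[Lemma 28]{AV2} route, modeled on $\Aff(\F_3,2)$, requires a braided Hopf subalgebra $K$ generated by the relations; but $z'$ is primitive only in $\widehat{\B_2}$, not in $T(-1,b,N)$, so you would have to carry out the construction one level up, at which point Gunther's iterative theorem is exactly the cleaner tool.

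Your arguments for (a)--(c) are valid but slightly differ from the paper: for (b) you use a support argument via $t_z\notin\{g_i\}$, whereas the paper argues by dimension ($1=\dim\pi_2(M_{z'})<\dim W(-1,b,N)$ and $W$ simple); both are correct. For (g) the paper simply exhibits the rescaling isomorphism $x_i\mapsto\lambda^{1/\deg z}x_i$, which directly gives $\cA_{G,\lambda}\simeq\cA_{G,1}$; your argument only establishes a necessary condition and implicitly relies on $\ku$ being algebraically closed to close the loop — true, but the paper's explicit map is the more economical proof.
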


\begin{proof}
Let $\phi:T(-1,b,N)\#\ku^G\rightarrow H$ be a lifting map and $M\subset T(-1,b,N)$ 
be 
the
Yetter-Drinfeld submodule generated by the quadratic relations defining $\B(-1,b,N)$,
see Proposition \ref{pro:rels-dual}. Then 
$$M=M^\times=\bigoplus_{i,j\in\F_b}M[(g_ig_j)^{-1}]\oplus\bigoplus_{i\in\F_b}M[g_i^{-2}]
$$ 
by Lemma \ref{le:neq in group rack}. Moreover, $\phi(M^\times)=0$ by Lemma
\ref{thm:liftings} \eqref{cor:liftings Mtimes} using Lemma \ref{le:neq in group rack}.
Therefore $\phi$ factorizes through $\widehat{\B_2}(-1,b,N)\#\ku^G$ and the
Yetter-Drinfeld module $M_{z'}$ generated by $z'$ is compatible with $\phi$. Therefore:

\eqref{item:quadratic thm:lifting over affines grandes} follows from Lemma
\ref{thm:liftings} \eqref{cor:liftings Me} and Theorem \ref{cor:generadors of ker phi} by \eqref{eq:z is skewprimi dual} . \eqref{item:no deforma g gen thm:lifting over affines grandes}
follows from
\eqref{item:quadratic thm:lifting over affines grandes} since $\chi_z=\chi_G^{\deg
z}=\varepsilon$ by Lemma \ref{le:f max degree rel}.
\eqref{item:no deforma no e thm:lifting over affines grandes} follows from Lemma
\ref{thm:liftings} \eqref{cor:liftings Mtimes} and Theorem \ref{cor:generadors of ker phi}
since $1=\dim\pi_2(M_{z'})<\dim W(-1,b,N)$; the equality holds by Lemma \eqref{eqn:chiz dual}.  

\eqref{item:Aglambda is galois bi-galois thm:lifting over
affines grandes} 
Let $w=\pi_2(z')\chi_z$. By Lemma \ref{le:f max degree rel prima} $\ku[w]$ is the 
subalgebra of right $\B(-1,b,N)\#\ku^G$-coinvariants.
By \cite[Corollaries 3.7 and 3.8]{AAnMGV} we can apply \cite[Theorem 4]{Gu} to the 
Yetter-Drinfeld algebra map
$$\ku[w]\longrightarrow \widehat{\B_2}(-1,b,N)\#\ku^G,\ w\mapsto w-\lambda\chi_z.$$
Hence $\cK_{G,\lambda}$ is a
$(\cA_{G,\lambda},\cA_{G,0})$-biGalois object. \eqref{item:Lglambda is cocycle
deformation thm:lifting
over affines grandes} and \eqref{item:Lglambda lifting thm:lifting over affines grandes}
are consequences of \eqref{item:Aglambda is galois bi-galois thm:lifting over affines
grandes}. For \eqref{item:Lglambda iso class thm:lifting over affines grandes}, the map
$F:\cA_{G,\lambda}\longrightarrow\cA_{G,1}$ given by $F(x_i)=\lambda^{1/\deg z}x_i$ and
$F_{|\ku^G}=\id_{|\ku^G}$ is an isomorphism of Hopf algebras.
\end{proof}

\begin{exa}
There are nontrivial liftings of $\B(-1,b,N)$ isomorphic to
$\cA_{G,\lambda}$. In fact, suppose that $m\mid\ell k+1$ and consider the $(m,k)$-affine
realization of $(\Aff(\F_b,N),-1)$; note that $z'\in T(-1,b,N)[e]$. Let $G'$ be a
finite group with a multiplicative character
$\chi_{G'}:G'\rightarrow\ku^*$ such that $\chi_{G'}^{\deg
z}\neq\varepsilon$. Then 
$G=(\F_b\rtimes C_{m\ell})\times G'$ acts on $W(-1,b,N)$ via
$(h\times g')\cdot x_i=\chi_{G'}(g')h\cdot x_i$ and thus  the
$(m,k)$-affine realization induces a principal YD-realization of $(\Aff(\F_b,N),-1)$ over $G$
such that $z'\in T(-1,b,N)[e]$ and $\chi_z=\chi_G^{\deg z}\neq\varepsilon$.
\end{exa}

\subsection{Proof of Main Theorem 2}

Let $H$ be a copointed Hopf algebra over $\ku^G$ whose infinitesimal braiding is given by 
a principal YD-realization $W(-1,b,N)\in\ydgdual$. Then $H$ 
is generated in degree one. Indeed, we can repeat the proof of \cite[Theorem 2.1]{AG3}, 
{\it mutatis mutandis}, using the results of Subsection \ref{subsec:nichols afin sobre 
el dual}. Hence $H$ is a lifting of $\B(-1,b,N)$ over $\ku^G$ and Main Theorem 
2 follows by Theorem \ref{prop: clas of F3 2} for $\Aff(\F_3,2)$ or else by Theorem 
\ref{thm:lifting over affines grandes}. \qed

\section*{Appendix: on computations}\label{appendix:coproduct}
Through this section we keep the hypotheses and notation in Subsection \ref{general 
results groups}. We explain how we can compute using \cite{GAP,GBNP} the left and right 
coactions of the top degree relation of a Nichols algebra as there.

Set $\mH'=\mH_2$, $\mA'=\mA_2(\lambda,\mu)$ and $\mL'=\mL(\lambda,\mu)$; these are 
quotients of $T(V)\#\ku G$. Let $\delta_R$ and $\delta_L$ be the coactions on $\mA'$ over 
$\mH'$ and $\mL'$, respectively; these are induced by the comultiplication of $T(V)\#\ku 
G$.

Let $z$ be the top degree generator of $\mJ(V)$, set $\ell=\deg z$. It is an element of 
$T(V)$ but we still denote by $z$ its class in $\mH'$, $\mA'$ or $\mL'$. We compute 
\begin{itemize}
\item[(i)] The coaction $\delta_R(z)\in \mA'\ot \mH'$.
\item[(ii)] The section $\gamma_2(z)\in\mA'$.
\item[(iii)] The coaction $\delta_L(\gamma_2(z))\in \mL'\ot \mA'$.
\end{itemize}

For item (i) we proceed as follows. Let $\theta=\dim V$ and denote by $y_i$, $i=1, 
\dots, \theta$ the generators of $\mA'$. We work with $3\theta+2$ 
variables
$G_1,\dots,G_\theta$, $X_1,\dots, X_\theta$, $Y_1,\dots, Y_\theta$, 
$U,V$. The variables $G_i$ stand
for the elements $g_i\ot 1$ in
$\mA'\ot \mH'$. The variables $Y_i$ and $X_i$ stand for 
$y_i\ot 1$ and $1\ot x_i$, respectively. The variables $U$ and $V$ are included to have 
an 
homogeneous system of generators. In most cases
one of them is enough (for instance if $n=2$) and there are cases where we can 
omit them. We fix two indeterminate elements $s,t\in \ku$, corresponding to $\lambda$, 
$\mu$.

We define an ideal $K$ of relations in the algebra generated by
these variables whose generators are, for every $1\leq i,j\leq \theta$ and for every class
$C\in\mR'$:
\begin{align*}
& Y_iX_j-X_{j}Y_i, && X_iG_j-G_{j}X_i, && G_iG_j-G_{i\rhd
j}G_i &&  G_iY_j+ Y_{i\rhd j}G_i, \\
& X_i^n,&& Y_i^n-t\,U,  &&b_C(\{X_i\}_{i\in X}),  && b_C(\{Y_i\}_{i\in X})+s\, V. 
\end{align*}
Also, $U$ and $V$ commute with all $X,Y,G$. Recall that $b_C(\cdot)$ stands 
for a generator of the space of quadratic relations, see \eqref{eqn:bc}. This ideal is
homogeneous if we declare all $X, Y, G$
of degree 1, $U$ of degree $n$ and $V$ of degree 2. We compute the (truncated, up to
degree $\ell$) Gr\"obner basis of $K$.

We define $d_i=Y_i+G_iX_i$, $i=1,\dots,\theta$. 
These elements stand for the coaction of $y_i\in \mA'$. We can now
compute the coaction $\delta_R(z)$ by adding and multiplying
the $d_i$'s in a suitable way. For $U,V$
we consider it as 1.

We now explain how we get $\gamma_2(z)$ in item (ii).
Let $\delta_R(z)-t_z\ot z=\sum A_i\ot
B_i$ and let $z'$ be the sum of the terms $B_i$ of greatest degree with $A_i\in\ku G$.
Re-write the $B_i$'s in the variables $Y_i$ and consider $z_1=z-z'$. We calculate
$\delta_R(z_1)$ and repeat the proceeding: in the examples considered, the order of the
elements we subtract decreases. When
$\delta_R(z_m)-t_z\ot z\in \mA'\ot \ku G$, for some $m$, we get that
$\delta_R(z_m)=z_m\ot 1+t_z\ot z$ and thus $\gamma_2(z)=z_m$.  

Finally, we find $\delta_L(\gamma_2(z))$ in (iii) in a similar way as we did for 
$\delta_R(z)$. 

\begin{exa}\label{exa:coaction}
Let $X=(\F_4, \omega)$. We use \cite{GAP,GBNP} to see that the algebras
$\mA'=\mA_2(\lambda_1,\lambda_2)$ are nonzero. See the log files in \newline
\texttt{http://www.mate.uncor.edu/$\sim$aigarcia/publicaciones.htm}. \newline
Set $z=(y_0y_1y_2)^2 + (y_1y_2y_0)^2+(y_2y_0y_1)^2 \in\mA'$, $t_z=g_0^3g_1^3\in
G$. Using \cite{GAP,GBNP}, the coaction of $z$ in $\mA'$ is $\delta_R(z)=z\ot
1+t_z \ot z$ plus:
\begin{align*}
& \lambda_2g_0^2g_3^2\ot x_1x_0x_2x_1 + \lambda_2g_0^2g_1g_3\ot x_0x_2x_1x_0 +
\lambda_2g_0^2g_1^2\ot x_2x_1x_0x_2\\
&\,  + \lambda_2g_0g_1g_3(y_2-y_0)\ot x_2x_1x_0+
\lambda_2g_0g_1g_3(y_1-y_2)\ot x_1x_0x_2  \\
&\, +\lambda_2g_0g_1g_3(y_0 -y_1)\ot x_0x_2x_1  + \lambda_2g_0^2g_1(y_3 - y_1)\ot
x_0x_1x_2 \\
&\, + \lambda_2g_0g_1^2(y_0  -y_3)\ot x_1x_2x_1 +
\lambda_2g_0^2g_3(y_2- y_3)\ot x_0x_1x_0\\
&\,+ \lambda_2g_1g_3(y_0y_2- y_1y_2-y_0y_1)\ot x_2x_1+ 2\lambda_2\lambda_1g_1g_3\ot x_2x_1
\\
&\,+ \lambda_2g_0g_3(y_2y_1+y_1y_2)\ot x_1x_0  +\lambda_2g_0g_2(y_1y_0+y_0y_1)\ot
x_0x_2\\
&\,+ \lambda_2g_0g_1(y_2y_3-y_2y_1-2y_1y_3 +y_1y_0 +y_0y_3)\ot x_1x_2\\
&\,+\lambda_2g_0g_1(y_0y_2+ 2 y_2y_3-y_2y_1-y_1y_3-y_0y_3)\ot x_0x_1 \\
&\, +\lambda_2g_0(y_2y_1y_0-y_1y_2y_1-y_1y_0y_3-y_0y_1y_3)\ot x_0\\
&\,  + \lambda_2g_1(y_0y_2y_1+y_0y_1y_2-y_2y_1y_3-y_1y_2y_3)\ot x_1\end{align*}
\begin{align*}
&\,  + \lambda_2g_2(y_1y_2y_3 + y_1y_0y_2 - y_0y_2y_3 + y_0y_1y_3-y_0y_1y_0)\ot x_2\\
&\, + \lambda_2(2\lambda_1-\lambda_2)g_0g_3\ot x_1x_0
+\lambda_2(2\lambda_1-\lambda_2)g_0g_2\ot x_0x_2\\
&\, + \lambda_2(\lambda_2-2\lambda_1)g_0(y_3-y_2)\ot
x_0 + \lambda_2g_0(\lambda_2 y_1-\lambda_1y_0)\ot x_0\\
&\,  +
\lambda_2(2\lambda_1-\lambda_2)g_1(y_0 -y_3)\ot
x_1-\lambda_1\lambda_2g_2( y_2+ y_3 )\ot x_2 \\
&+\lambda_2g_2(2\lambda_1-\lambda_2)y_1\ot x_2+\lambda_2g_2( \lambda_2y_0-\lambda_1
y_3)\ot x_2.
\end{align*}
If $z_1= z- \lambda_2 z' + \lambda_2(\lambda_2
-\lambda_1)z''$ where $z'=y_1y_0y_2y_1+ y_0y_2y_1y_0+y_2y_1y_0y_2$, $z''=y_2y_1  + 
y_1y_0 + y_0y_2$, we get $\delta_R^2(z_1t_z^{-1})=z_1t_z^{-1}\ot t_z^{-1}+1\ot z$. Thus,
$\gamma_2(zt_z^{-1})=z_1t_z^{-1}$ and $\gamma_2(z)=z_1$. The computation of
$\delta_L^2(\gamma_2(z))-1\ot \gamma_2(z)$ yields $s_X$ in Subsection \ref{subsec:liftingsf4}.
\end{exa}

\end{document}